\theoremstyle{plain}
\newtheorem{theorem}{Theorem}
\newtheorem*{theorem*}{Theorem}
\newtheorem{lemma}[theorem]{Lemma}
\newtheorem{proposition}[theorem]{Proposition}
\newtheorem{corollary}[theorem]{Corollary}
\newtheorem{claim}[theorem]{Claim}
\theoremstyle{definition} 
\newtheorem{definition}[theorem]{Definition}
\newtheorem{remark}[theorem]{Remark}
\newcommand{\tn}{\tilde{\nabla}}
\newcommand{\firstvar}[2]{\frac{\delta {#1}}{\delta {#2}}}
\newcommand{\iR}{\int_{\mathbb{R}^d}}
\newcommand{\iiR}{\iint_{\mathbb{R}^{2d}}}
\newcommand{\Rd}{\mathbb{R}^d}
\newcommand{\R}{\mathbb{R}}
\newcommand{\textitunder}[1]{\noindent\underline{\textit{{#1}}}}
\newcommand{\PtwoE}{\mathscr{P}_{2,E}(\Rd)}
\newcommand{\japangle}[1]{\left\langle #1 \right\rangle}
\newcommand{\Rthree}{\mathbb{R}^3}
\newcommand{\iRthree}{\int_{\mathbb{R}^3}}
\newcommand{\iiRsix}{\iint_{\mathbb{R}^6}}
\title{The Landau equation as a Gradient Flow}
\author{Jos\'e A. Carrillo\thanks{Mathematical Institute, University of Oxford, Oxford OX2 6GG, UK (carrillo@maths.ox.ac.uk)}, Matias G. Delgadino\thanks{Department of Mathematics, The University of Texas at Austin, Texas, USA (matias.delgadino@math.utexas.edu)}, Laurent Desvillettes\thanks{Universit\'e Paris Cit\'e, Sorbonne Universit\'e, CNRS, IUF, Institut de Math\'ematiques de Jussieu-Paris Rive Gauche (desvillettes@math.univ-paris-diderot.fr)}, Jeremy Wu\thanks{Mathematical Sciences Building, University of California, Los Angeles, USA (jeremywu@math.ucla.edu)}}
\begin{document}
	\maketitle
	
	\begin{abstract}
		We propose a gradient flow perspective to the spatially homogeneous Landau equation for soft potentials. We construct a tailored metric on the space of probability measures based on the entropy dissipation of the Landau equation. Under this metric, the Landau equation can be characterized as the gradient flow of the Boltzmann entropy. In particular, we characterize the dynamics of the PDE through a functional inequality which is usually referred as the Energy Dissipation Inequality (EDI). Furthermore, analogous to the optimal transportation setting, we show that this interpretation can be used in a minimizing movement scheme to construct solutions to a regularized Landau equation.
	\end{abstract}

	\section{Introduction}
	\label{sec:intro}
	The Landau equation is an important partial differential equation in kinetic theory. It gives a description of colliding particles in plasma physics~\cite{lifshitz_physical_1981}, and it can be formally derived as a limit of the Boltzmann equation where grazing collisions are dominant~\cite{DLD92,villani_new_1998}. Similar to the Boltzmann equation (see \cite{BPS13} for a consistency result and related derivation issues), the rigorous derivation of the Landau equation from particle dynamics is still a huge challenge. 
	For a spatially homogeneous density of particles $f = f_t(v)$ for $t\in (0,\infty), v\in\Rd$ the homogeneous Landau equation reads
	\begin{equation}
		\label{eq:lan}
		\partial_t f(v) = \nabla_v \cdot \left(
		f(v)\int_{\mathbb{R}^d} |v-v_*|^{2+\gamma}\Pi[v-v_*](\nabla_v \log f(v) - \nabla_{v_*} \log f(v_*))f(v_*) dv_*
		\right).
	\end{equation}
	For notational convenience, we sometimes abbreviate $f = f_t(v)$ and $f_{*} = f_t(v_*)$ . We also denote the differentiations by $\nabla = \nabla_v$ and $\nabla_* = \nabla_{v_*}$. The physically relevant parameters are usually $d=2,3$ and $\gamma \geq -d-1$ with $\Pi[z] = I - \frac{z\otimes z}{|z|^2}$ being the projection matrix onto $\{ z\}^\perp$. In this paper, for simplicity we will focus in the case $d=3$ and vary the weight parameter $\gamma$, although most of our results are valid in arbitrary dimension. The regime $0 < \gamma < 1$ corresponds to the so-called \textit{hard potentials} while $\gamma < 0$ corresponds to the \textit{soft potentials} with a further classification of $-2 \leq \gamma < 0$ as the moderately soft potentials and $-4 \leq \gamma< -2$ as the very soft potentials. The particular instances of $\gamma=0$ and $\gamma=-d$ are known as the Maxwellian and Coulomb cases respectively. 
	
	The purpose of this work is to propose a new perspective inspired from gradient flows for weak solutions to~\eqref{eq:lan}, which is in analogy with the relationship of the heat equation and the 2-Wasserstein metric, see \cite{jordan_variational_1998, ambrosio_gradient_2008-1}. Our main result is inspired by and extends Erbar's work~\cite{erbar_gradient_2016}. There, he establishes the gradient flow perspective for the closely related spatially homogeneous Boltzmann equation with bounded collision kernels ($\gamma=0$) which we perform in the case of Landau for $\gamma\in(-3,0]$ (c.f.~\Cref{thm:fullequiv}). One of the fundamental steps is to symmetrize the right hand of \eqref{eq:lan}. More specifically, if we consider a test function $\phi \in C_c^\infty(\mathbb{R}^d)$ we can formally characterize the equation by
	\begin{equation}
		\label{eq:wklan}
		\frac{d}{dt}\int_{\mathbb{R}^d} \phi f dv = - \frac{1}{2}\iint_{\mathbb{R}^{2d}}  ff_* |v-v_*|^{2+\gamma} (\nabla \phi - \nabla_* \phi_*) \cdot \Pi[v-v_*](\nabla \log f - \nabla_*\log f_*)dv_* dv,
	\end{equation}
	where the change of variables $v\leftrightarrow v_*$ has been exploited. Building our analogy with the heat equation and the 2-Wasserstein distance, we define an appropriate gradient
	$$
	\tn\phi:= |v-v_*|^{1+\frac{\gamma}{2}}\Pi[v-v_*](\nabla \phi - \nabla_* \phi_*),
	$$
	so that equation~\eqref{eq:wklan} now looks like
	\[
	\frac{d}{dt}\iR \phi f dv = -\frac{1}{2} \iiR ff_* \tn \phi \cdot \tn \log f dv_* dv,
	\]
	noting that $\Pi^2 = \Pi$. To highlight the use of this interpretation, we notice that $\tn \phi=0$, when we choose as test functions $\phi = 1,\; v_i,\; |v|^2$ for $i=1,\dots,d$ which immediately shows that formally the equation conserves mass, momentum and energy. The action functional defining the Landau metric mimics the Benamou-Brenier formula \cite{benamou_computational_2000} for the 2-Wasserstein distance, see \cite{dolbeault_new_2009,E14,EM14} for other distances defined analogously for nonlinear and non-local mobilities. In fact, the Landau metric is built by considering a minimizing action principle over curves that are solutions to the appropriate continuity equation, that is
	\begin{equation}\label{dlmetric}
		d_L(f,g):=\min_{\substack{\partial_t\mu+\frac{1}{2}\tn\cdot(V\mu\mu_*)=0\\ \mu_0=f,\;\mu_1=g}} \left\{\frac{1}{2}\int_0^1 \iiR |V|^2\;d\mu(v)d\mu(v_*)dt\right\},
	\end{equation}
	where the $\tn\cdot$ is the appropriate divergence; the formal adjoint to the appropriate gradient  (see Section~\ref{sec:notation}).

	Also, we notice that analogously to the heat equation, written  as the continuity equation $\partial_t f= \nabla\cdot(f\nabla \log f)$, the Landau equation can be formally re-written as
	$$
	\partial_t f=\frac{1}{2}\tn\cdot(ff_*\tn \log f ),
	$$
	equivalent to the continuity equation with non-local velocity field given by
	\begin{equation}\label{eq:gradflow}
		\begin{cases}
			\partial_t f + \nabla\cdot( U(f) f)=0\\[2mm]
			\displaystyle U(f) := - \int_{\mathbb{R}^d} |v-v_*|^{2+\gamma}\Pi[v-v_*]\left(\nabla \log f - \nabla_*\log f_*\right)f_* dv_* \,.
		\end{cases} 
	\end{equation}
	This is a direct way to write~\eqref{eq:lan} in the form of a continuity equation. Considering the evolution of Boltzmann entropy we formally obtain
	\begin{align}
		\begin{split}
			\label{eq:entdiss}
			\frac{d}{dt}\int_{\mathbb{R}^d}f\log f dv 	&=: -D(f_t)=
			-\frac{1}{2}\iint_{\mathbb{R}^{2d}}|\tn \log f|^2 ff_* dv_* dv \leq 0.
		\end{split}
	\end{align} 
	In physical terms this is referred to as the \textit{entropy dissipation} referred to as entropy production in the physics literature from defining $\mathcal{H}$ with a minus sign) since it formally shows that the entropy functional 
	$$
	\mathcal{H}[f] := \int_{\mathbb{R}^d} f \log f dv
	$$
	is non-increasing along the dynamics of the Landau equation. Moreover, by integrating equation~\eqref{eq:entdiss} in time one formally obtains
	\begin{align}
		\label{eq:entprod}
		\begin{split}
			\mathcal{H}[f_t]	&+ \int_0^t D(f_s)ds = \mathcal{H}[f_0].
		\end{split}
	\end{align}
	In \cite{villani_new_1998}, Villani introduced the notion of H-solution, which captures this formal property. Motivated by the physical considerations of certain conserved quantities and entropy dissipation,  H-solutions provided a step towards well-posedness of the Landau equation in the soft potential case. One advantage to this approach is that it avoids assuming that the solutions belongs to $L^p(\mathbb{R}^3)$ for $p>1$. For moderately soft potentials, the propagation of $L^p$ norms is proven and this is enough to make sense of classical weak solutions~\cite{wu_global_2013}. In the very soft potential case, there is no longer a guarantee of $L^p$ propagation due to the singularity of the weight. We refer to~\cite[Section 1.2]{desvillettes_entropy_2015} for a heuristic description of this difficulty.
	
	Similar to H-solutions our approach will also be based on the entropy dissipation \eqref{eq:entprod}. Following De Giorgi's minimizing movement ideas \cite{ambrosio_min,ambrosio_gradient_2008-1}, we characterize the Landau equation by its associated Energy Dissipation Inequality. More specifically, we show that weak solutions to \eqref{eq:lan} with initial data $f_0$ are completely determined by the following functional inequality: 
	$$
	\mathcal{H}[f_t]+\frac{1}{2}\int_0^t |\dot{f}|^2_{d_L}(s)\;ds+\frac{1}{2}\int_0^t D(f_s)\;ds\le \mathcal{H}[f_0]\qquad\mbox{for a.e. every $t>0$,}
	$$
	where $|\dot{f}|^2_{d_L}(s)$ stands for the metric derivative associated to the Landau metric defined above. Our analysis is also largely inspired by Erbar's approach in viewing the Boltzmann equation as a gradient flow~\cite{erbar_gradient_2016} and recent numerical simulations of the homogeneous Landau equation in~\cite{carrillo_particle_2019} based on a regularized version of \eqref{eq:gradflow}. In contrast with the classical 2-Wasserstein metric, one of the main features of the Landau equation~\eqref{eq:lan} and metric~\eqref{dlmetric} is that they are non-local. To be precise, gradient flow theory has been successfully applied to the study of many non-local PDEs~\cite{carrillo_confinement_2012,BCC08,CLSS10} by viewing them as gradient flows of appropriate energy functionals with respect to the 2-Wasserstein metric. The novelty in this work is the construction of the \textit{non-local} metric $d_L$ with respect to which~\eqref{eq:lan} can be viewed as the gradient flow of $\mathcal{H}$. Hence, the convergence analysis usually relying on convexity and lower-semi continuity needs to be adapted to deal with the non-locality of this equation. In particular, our characterization Theorem~\ref{thm:fullequiv} is based in using (expected) a-priori estimates to deal with the non-locality through appropriate bounds.
	
	On the other hand, the state of the art related to the uniqueness for the Landau equation depends on the range of values $\gamma$ may take. In the cases of hard potentials or Maxwellian, the uniqueness theory is very well understood due to Villani and the third author~\cite{desvillettes_spatially_2000-1,desvillettes_spatially_2000,villani_spatially_1998}. In the soft potential case, one of the first major contributions to the general theory of the spatially inhomogeneous Landau equation ($\gamma \geq -3$) was the global existence and uniqueness result by Guo~\cite{guo_landau_2002}. This result was achieved in a perturbative framework with high regularity assumptions on the initial data. Through probabilistic arguments, the next major improvement to uniqueness for $\gamma \in (-3,0)$ came from Fournier and Gu\'erin~\cite{fournier_well-posedness_2009}. Their result established uniqueness in a class of solutions that shrinks as $\gamma$ decreases towards $-3$, as more $L^p$ and moments assumptions are needed. In their proof, uniqueness is shown by proving stability with respect to the 2-Wasserstein metric.
	
	Still lots of open questions for the soft potential case remain. In particular, a fundamental question like uniqueness for the Coulomb case is unresolved. To tackle this and other problems an array of novel methods have been employed. Here is an incomplete sample of the contributions made in this direction which highlight the difficulties of the soft potential case~\cite{desvillettes_spatially_2000,desvillettes_spatially_2000-1,alexandre_priori_2015,carrapatoso_landau_2015,carrapatoso_estimates_2015,wu_global_2013,gualdani_spectral_2016,gualdani2016estimates,gualdani_global_2017,gualdani_review_2018,strain_uniqueness_2019, GIMV19, S17,GGIV19}. A brief glance at some of these references illustrates the breadth of techniques that have found partial success at answering the open questions; probability-based arguments, kinetic and parabolic theory, and many more.
	
	The purpose of this paper is to bring in another set of techniques to help answer some of these fundamental questions. The gradient flow theory applied to PDEs has flourished in the last decades. In their seminal paper~\cite{jordan_variational_1998}, Jordan, Kinderlehrer, and Otto proposed a variational approach (JKO scheme) extended later on to a wide class of PDEs using the optimal transportation distance of probability measures. These results and many more achievements from their contemporaries allowed for novel approaches to questions of existence, uniqueness, convergence to equilibrium, and other aspects of a large class of PDE; we mention~\cite{ambrosio_gradient_2008-1,santambrogio_euclidean_2017} for a coherent exposition of these techniques and the relevant literature, even as more advances have been made since then. 
	
	The advantage of our variational characterization of the Landau equation is that it unveils new possible routes of showing convergence results for this equation. First of all, it allows for natural regularizations of the Landau equation by taking the steepest descent of regularized entropy functionals instead of the Boltzmann entropy as in \cite{MR3913840}. This idea was recently developed in \cite{carrillo_particle_2019} leading to structure preserving particle schemes with good accuracy. We can also consider the framework of convergence of gradient flows based on $\Gamma$-convergence introduced in \cite{SS04,serfaty2011} to attack the convergence of these numerical methods \cite{carrillo_particle_2019}. Moreover, this approach is flexible enough to also study the rigorous convergence of the grazing collision limit of the Boltzmann equation to the Landau equation. The grazing collision limit was recently revisited in the gradient flow framework by three of the authors~\cite{CDW21}. There, ideas from $\Gamma$-convergence were used to pass from Erbar's gradient flow description for the Boltzmann equation~\cite{erbar_gradient_2016} to the present work's description of the Landau equation.
	Finally, deriving uniqueness from the variational structure is classically done through convexity properties of the entropy functional with respect to the geodesics of the Landau metric. This is another important avenue of research that our work opens. Moreover, gradient flows of convex entropies typically enjoy instantaneous smoothing~\cite{ambrosio_gradient_2008-1}; even if the entropy at $t=0$ is infinite, for $t>0$, the entropy becomes finite. In the case of Landau, we are not aware if this property holds for $\mathcal{H}$.
	
	We mention briefly the connection between~\eqref{eq:lan} and the Fokker-Planck equation. For $\gamma=0$, one can formally compute the evolution of $\int v^i v^jf(v)dv$ through~\eqref{eq:lan}. This a priori information allows one to reduce~\eqref{eq:lan} to a linear Fokker-Planck equation for $\gamma=0$.
The present work proposes the alternative viewpoint that the resultant Fokker-Planck equation can be viewed as the $d_L$-gradient flow of $\mathcal{H}$ for $\gamma=0$. Since many variants of the linear Fokker-Planck equation have been well-studied, this case serves as a nice benchmark to test the gradient flow theory developed here.
	
	The plan of this paper is as follows. Section~\ref{sec:prelim} introduces the prerequisites and contains the  statements of the main results. We first construct and analyze in Section~\ref{sec:Landist} the Landau metric based on \eqref{dlmetric}. For a regularized problem, Section~\ref{sec:EDE} shows the equivalence between weak solutions and gradient flows, while Section~\ref{sec:JKO} shows the existence of gradient flow solutions via a Minimizing Movement scheme. Finally, we show in Section 6 that a gradient flow solution is equivalent to H-solutions of the Landau equation \eqref{eq:lan} under some integrability assumptions. Appendix A is devoted to some technical lemmata needed in the proof of the main theorems regarding the chain rule identity behind the definition of weak solutions for the regularized Landau equation.

	
	\section{Preliminaries and the main results}
	\label{sec:prelim}
	We start by introducing the necessary notation and definitions together with a quick overview of gradient flow concepts to make our main results fully self-contained.
	
	\subsection{Notations and definitions}
	\label{sec:notation}
	We denote
	\[
	a \lesssim_{\dots} b \iff \exists C(\dots)>0 \text{ s.t. } a \leq C(\dots)b.
	\]
	We adopt the Japanese angle bracket notation for a smooth alternative to absolute value
	\[
	\japangle{v}^2 = 1+ |v|^2, \quad v \in \Rd.
	\]
	For $\epsilon>0$, we denote our regularization kernel to be an exponential distribution
	\[
	G^{\epsilon}(v) = \epsilon^{-d} G\left(
	v/\epsilon
	\right), \quad G(v) = C_d \exp\left(
	-\japangle{v}
	\right), \quad C_d = \left(
	\iR \exp (-\japangle{v}) dv
	\right)^{-1}.
	\]
	Our results work for some general tail behaviour in the kernels given by
	\[
	G^{s,\epsilon}(v) = \epsilon^{-d}G^s(v/\epsilon), \quad G^s(v) = C_{s,d} \exp(-\japangle{v}^s), \quad C_{s,d} = \left(\iR \exp(-\japangle{v}^s)dv\right)^{-1},
	\]
	for $s>0$; we point out some of the limitations and restrictions on $s>0$ in the later estimates. We shall refer to $G^{2,\epsilon}$ as the Maxwellian regularization. We denote the space of probability measures over $\mathbb{R}^d$ by $\mathscr{P}(\Rd)$, endowed with the weak topology against bounded continuous functions. We will mostly be dealing with the Lebesgue measure on $\Rd$ as our reference measure which we denote by $\mathcal{L}$. The subset $\mathscr{P}^a(\Rd) \subset \mathscr{P}(\Rd)$ denotes the set of absolutely continuous probability measures with respect to Lebesgue measure. For $p>0$, we also define the probability measures with finite $p$-moments $\mathscr{P}_p(\mathbb{R}^d)$ by
	\[
	\mathscr{P}_p(\mathbb{R}^d):= \left\{
	\mu \in \mathscr{P}(\mathbb{R}^d) \, \bigg| \, m_p(\mu) :=  \iR  \japangle{v}^p d\mu(v) < \infty
	\right\}.
	\]
	Finally, for $E>0$, we consider the subset $\mathscr{P}_{p,E}(\Rd)\subset \mathscr{P}_p(\Rd)$ of probability measures with $p$-moments uniformly bounded by $E$;
	\[
	\mathscr{P}_{p,E}(\Rd) := \left\{
	\mu \in \mathscr{P}_p(\Rd) \, \bigg| \, m_p(\mu) \leq E
	\right\}.
	\]
	
	We denote by $\mathcal{M}$ the space of signed Radon measures on $\mathbb{R}^d\times \Rd$ with the standard weak* topology against the continuous and compactly supported functions of $\Rd\times \Rd$. The space $\mathcal{M}^d$ is the space of signed $d$-length Radon measures. For $T>0$, we will add the time contribution of the measures by denoting $\mathcal{M}_T$ to be the space of signed Radon measures on $\mathbb{R}^d\times \Rd \times [0,T]$ with the usual weak* topology. Similarly, $\mathcal{M}_T^d$ will be the space of signed $d$-length Radon measures on $\Rd \times \Rd \times [0,T]$.  
	
	For $\mu\in\mathscr{P}(\Rd)$, we define a family of regularized entropies $\mathcal{H}_\epsilon[\mu]$ by
	\[
	\mathcal{H}_\epsilon[\mu] := \iR [\mu*G^\epsilon](v)\log [\mu*G^\epsilon](v)dv,
	\]
	which we shall see is well-defined provided $\mu$ has a finite moment in Lemma~\ref{lem:carlen}. Formally, one can calculate the first variation of this functional in $\mathscr{P}_2$ as
	\[
	\firstvar{\mathcal{H}_\epsilon}{\mu}(v) = G^\epsilon * \log [\mu*G^\epsilon](v).
	\]
	This can be formally obtained by calculating Fr\'echet derivatives in the sense of identifying the following limit
\[
\int_{\R^d}\frac{\delta \mathcal{H}_\epsilon}{\delta \mu}(v) \phi(v)dv = \lim_{t\downarrow 0}\frac{\mathcal{H}_\epsilon[\mu + t\phi] - \mathcal{H}_\epsilon[\mu]}{t},
\]
for arbitrary $\phi\in C_c^\infty(\R^d)$ with zero mean $\int_{\R^d}\phi = 0$. To be precise, the first variation (in an $L^2$ setting) would actually be $\frac{\delta \mathcal{H}_\epsilon}{\delta \mu} = 1 + G^\epsilon * \log [\mu * G^\epsilon]$. We drop the constant term since our functional space is $\mathscr{P}$ and the first variation typically appears with derivatives applied to it. For a functional $\mathcal{F}: \mathscr{P}^a(\Rd) \to \R$ with first variation $\firstvar{\mathcal{F}}{f}$, we refer to the $\mathcal{F}$ Landau equation as
	\begin{equation}
		\label{eq:FLandau}
		\partial_tf = \nabla\cdot \left(
		f\iR f_* |v-v_*|^{2+\gamma} \Pi[v-v_*]\left(
		\nabla \firstvar{\mathcal{F}}{f}- \nabla_* \firstvar{\mathcal{F}_*}{f_*}
		\right)dv_*
		\right).
	\end{equation}
	To clarify the meaning of $\tn\cdot$, for a given test function $\phi = \phi(v) \in \R^d$ and vector-valued test function $A = A(v,v_*) \in \R^d$, we have
	\[
	\iiR [\tn\phi](v,v_*)\cdot A(v,v_*) dv_* dv = -\iR \phi(v) [\tn\cdot A](v) dv.
	\]
	In this way, the $\mathcal{F}$ Landau equation~\eqref{eq:FLandau} can be concisely written as
	\[
	\partial_t f = \frac{1}{2}\tn \cdot \left(
	ff_* \tn\firstvar{\mathcal{F}}{f}
	\right).
	\]
	Note, by formally testing~\eqref{eq:FLandau} with $\phi = \firstvar{\mathcal{F}}{f}$, one obtains an analogy of Boltzmann's H-theorem with the functional $\mathcal{F}$;
	\[
	\frac{d}{dt}\mathcal{F}[f_t] = -D_{\mathcal{F}}(f_t) := -\frac{1}{2}\iiR ff_* \left|
	\tn \firstvar{\mathcal{F}}{f}
	\right|^2 dvdv_*\leq 0.
	\]
	We will refer to $D_\mathcal{F}$ as the $\mathcal{F}$ dissipation. These notations induce our notion of weak solutions to the $\mathcal{F}$ Landau equation~\eqref{eq:FLandau} closely following Villani's H-solutions~\cite{villani_new_1998}.
	
	\begin{definition}[Weak $\mathcal{F}$ solutions]
		\label{def:wkwFLandau}
		For $T>0$, we say that a curve $f\in C([0,T];L^1(\Rd))$ is a weak solution to the $\mathcal{F}$ Landau equation~\eqref{eq:FLandau} if the following hold.
		\begin{enumerate}
			\item $f\mathcal{L}$ is a probability measure with uniformly bounded second moment so that
			\[
			f_t\geq 0, \quad \iR f_t(v) dv = 1, \quad \forall t\in[0,T], \quad \sup_{t\in[0,T]} \iR \japangle{v}^2 f_t(v) dv < \infty.
			\]
			\item The functional $\mathcal{F}$ evaluated along the curve is bounded by its initial value
				\[
				\mathcal{F}[f_t] \le \mathcal{F}[f_0] <+\infty, \quad \forall t\in[0,T].
				\]
			\item The $\mathcal{F}$ dissipation is time integrable
			\[
			\int_0^T D_{\mathcal{F}}(f_t)dt = \frac{1}{2}\int_0^T \iiR ff_* \left|
			\tn \firstvar{\mathcal{F}}{f}
			\right|^2 dvdv_* dt < \infty.
			\]
			\item For every test function $\phi\in C_c^\infty((0,T)\times \Rd)$, equation~\eqref{eq:FLandau} is satisfied in weak form
			\[
			\int_0^T\iR \partial_t \phi f_t(v) dv dt = \frac{1}{2} \int_0^T \iiR ff_* \tn \phi \cdot \tn \firstvar{\mathcal{F}}{f}dvdv_*dt.
			\]
		\end{enumerate}
	\end{definition}
	For $\epsilon>0$, we will refer to the weak $\mathcal{H}_\epsilon$ solutions as \textit{$\epsilon$-solutions} and, recalling $\mathcal{H}$ is the Boltzmann entropy, we will refer to weak $\mathcal{H}$ solutions as just \textit{weak solutions} or \textit{H-solutions}. We deliberately use the terminology of H-solutions since the time integrability of $D_{\mathcal{H}}(f_t)$, as for Villani~\cite{villani_new_1998}, is essential in our analysis.
	\subsection{Quick review of gradient flow theory}
	\label{sec:revgradflow}
	We recall the basic definitions of gradient flow theory that can be found in more generality in~\cite[Chapter 1]{ambrosio_gradient_2008-1}. Throughout, $(X,d)$ denotes a complete (pseudo)-metric space $X$ with (pseudo)-metric $d$. Points $a<b\in\mathbb{R}$ will refer to endpoints of some interval. $F:X\to (-\infty,\infty]$ will denote a proper function.
	\begin{definition}[Absolutely continuous curve]
		\label{defn:abscontcurve}
		A function $\mu : t\in(a,b) \mapsto \mu_t\in X$ is said to be an \textit{absolutely continuous curve} if there exists $m\in L^2(a,b)$ such that for every $s\leq t\in(a,b)$
		\[
		d(\mu_t,\mu_s) \leq \int_s^tm(r)dr.
		\]
	\end{definition}
	Among all possible functions $m$ in Definition~\ref{defn:abscontcurve}, one can make the following minimal selection.
	\begin{definition}[Metric derivative]\label{defn:metder}
		For an absolutely continuous curve $\mu : (a,b) \to X$, we define its \textit{metric derivative} at every $t\in(a,b)$ by
		\[
		|\dot{\mu}|(t) := \lim_{h\to0}\frac{d(\mu_{t+h},\mu_t)}{|h|}.
		\]
	\end{definition} 
	Further properties of the metric derivative can be found in~\cite[Theorem 1.1.2]{ambrosio_gradient_2008-1}.
	
	\begin{definition}[Strong upper gradient]
		\label{defn:sug}
		The function $g:X\to [0,\infty]$ is a \textit{strong upper gradient} with respect to $F$ if for every absolutely continuous curve $\mu : t\in (a,b) \mapsto \mu_t \in X$ we have that $g \circ \mu :(a,b) \to [0,\infty]$ is Borel and the following inequality holds
		\[
		|F[\mu_t] - F[\mu_s]| \leq \int_s^t g(\mu_r)|\dot{\mu}|(r)dr, \quad \forall a < s \leq t <b.
		\]
	\end{definition}
	Using Young's inequality and moving everything to one side, the inequality in Definition~\ref{defn:sug} implies
	\[
	F[\mu_t] - F[\mu_s] +\frac{1}{2} \int_s^t g(\mu_r)^2 dr + \frac{1}{2}\int_s^t |\dot{\mu}|^2(r) dr \geq 0, \quad \forall a< s \leq t < b.
	\]
	If the reverse inequality also holds, one obtains the stronger Energy Dissipation \textit{Equality}. This leads to our notion of gradient flows.
	\begin{definition}[Curve of maximal slope]
		\label{defn:cms}
		An absolutely continuous curve $\mu :(a,b)\to X$ is said to be a \textit{curve of maximal slope} for $F$ with respect to its strong upper gradient $g: X \to [0,\infty]$ if $F \circ \mu : (a,b) \to [0,\infty]$ is non-increasing and the following inequality holds
		\[
		F[\mu_t] - F[\mu_s] +\frac{1}{2} \int_s^t g(\mu_r)^2 dr + \frac{1}{2}\int_s^t |\dot{\mu}|^2(r) dr \leq 0, \quad \forall a< s \leq t < b.
		\]
	\end{definition}
	$F$ has the following natural candidates for upper gradient.
	\begin{definition}[Slopes]
		\label{defn:slope}
		We define the \textit{local slope of $F$} by
		\[
		|\partial F|(\mu) := \limsup_{\nu\to \mu}\frac{(F(\nu) - F(\mu))^+}{d(\nu,\mu)}.
		\]
		The superscript `+' refers to the positive part. The \textit{relaxed slope of $F$} is given by
		\[
		|\partial^-F|(\mu) := \inf\{
		\liminf_{n\to\infty}|\partial F|(\mu_n) \, : \, \mu_n \to \mu, \, \sup_{n\in\mathbb{N}}(d(\mu_n,\mu),F(\mu_n)) < +\infty
		\}.
		\]
	\end{definition}

	
	\subsection{Main results}
	\label{sec:mainresults}
	In order to understand the Landau equation as a gradient flow, we need to clarify what type of object the corresponding metric is. 
	\begin{theorem}[Distance on $\mathscr{P}_{2,E}(\Rd)$]
		\label{them:existdL}
		The (pseudo)-metric $d_L$ on $\mathscr{P}_{2,E}(\R^d)$ satisfies:
		\begin{itemize}
			\item $d_L$-convergent sequences are weakly convergent.
			\item   $d_L$-bounded sets are weakly compact.
			\item   The map $(\mu_0,\mu_1)\mapsto d_L(\mu_0,\mu_1)$ is weakly lower semicontinuous.
			\item For any $\tau\in\mathscr{P}_2(\R^d)$ the subset $\mathscr{P}_\tau(\Rd):= \left\{ \mu \in \mathscr{P}_{2,m_2(\tau)}(\R^d) \, | \, d_L(\mu,\tau)<\infty \right\}$ is a complete geodesic space.
		\end{itemize}
	\end{theorem}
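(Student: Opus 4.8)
The plan is to analyse \eqref{dlmetric} within the Benamou--Brenier / nonlinear-mobility framework (cf.\ \cite{dolbeault_new_2009,E14,EM14} and \cite[Ch.~8]{ambrosio_gradient_2008-1}), the genuinely new feature being the non-local, $|v-v_*|$-weighted structure of $\tn$. The first step is to pass to the momentum variable: to a curve $(\mu_t)_{t\in[0,1]}$ one associates the pair $(\mu_t,\mathbf E_t)$, where $\mathbf E_t$ is the $\Rd$-valued measure on $\Rtwod$ formally given by $V_t\,\mu_t\otimes\mu_t$, so that the continuity constraint becomes $\partial_t\mu+\frac12\tn\cdot\mathbf E=0$, now \emph{linear} in $(\mu,\mathbf E)$, and the action takes the form
\[
\mathcal A(\mu_\cdot,\mathbf E_\cdot):=\tfrac12\int_0^1\iiR\Big|\tfrac{d\mathbf E_t}{d(\mu_t\otimes\mu_t)}\Big|^2\,d(\mu_t\otimes\mu_t)\,dt
\]
(and $+\infty$ when $\mathbf E_t\not\ll\mu_t\otimes\mu_t$). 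This functional is jointly convex and, by the standard lower-semicontinuity theory for integral functionals of measures together with convexity of $(\mathbf e,r)\mapsto |\mathbf e|^2/r$, weakly-$*$ lower semicontinuous in $(\mu,\mathbf E)$, while the constraint set is weakly-$*$ closed. Along the way one records the elementary facts that $d_L$ is a (pseudo)metric — symmetry by the time reversal $V\mapsto-V$, triangle inequality by concatenation and reparametrisation — and that the infimum in \eqref{dlmetric} is attained; both are reused below.

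The second step is the key a priori estimate. Because $\nabla_v\japangle{v}^2-\nabla_{v_*}\japangle{v_*}^2=2(v-v_*)$ lies in $\ker\Pi[v-v_*]$, we have $\tn\japangle{\cdot}^2=0$; running a truncation argument (the field $\tn(\chi_R\japangle{\cdot}^2)$ is supported in $\{|v|>R\}\cup\{|v_*|>R\}$, on which $\mu_t$ has small mass by Markov) together with Gr\"onwall, one deduces that the second moment is controlled — in fact conserved — along any finite-action curve, so a curve joining two points of $\PtwoE$ stays in $\PtwoE$. Moreover, for $\phi\in C^{1,1}(\Rd)$ with $\nabla\phi$ bounded and Lipschitz one has $|\tn\phi|^2\le |v-v_*|^{2+\gamma}\min\{L^2|v-v_*|^2,\,4\|\nabla\phi\|_\infty^2\}\lesssim_\phi \japangle{v}^2\japangle{v_*}^2$ (near the diagonal $4+\gamma\ge0$ renders the singular weight harmless), whence $\iiR|\tn\phi|^2\,d\mu\,d\mu_*\lesssim_\phi m_2(\mu)^2$ and, by Cauchy--Schwarz in the weak formulation,
\[
\Big|\iR\phi\,d\mu_t-\iR\phi\,d\mu_s\Big|\ \le\ C(\phi,E)\,\sqrt{|t-s|}\,\sqrt{\mathcal A(\mu_\cdot,\mathbf E_\cdot)}\qquad\text{for curves in }\PtwoE .
\]

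Given these ingredients the first three bullets follow by soft arguments. If $d_L(\mu^n,\mu)\to0$, pick near-optimal curves from $\mu$ to $\mu^n$; the displayed estimate at $s=0$, $t=1$ gives $\iR\phi\,d\mu^n\to\iR\phi\,d\mu$ for every such $\phi$, and together with tightness (all $\mu^n\in\PtwoE$) and the density of this test class this yields $\mu^n\rightharpoonup\mu$. The second bullet is immediate, since $\PtwoE$ is tight and $\{m_2\le E\}$ is weakly closed (weak lsc of $m_2$), so $\PtwoE$, hence any $d_L$-bounded subset, is weakly compact. For weak lower semicontinuity, take $\mu_0^n\rightharpoonup\mu_0$, $\mu_1^n\rightharpoonup\mu_1$ with $\liminf_n d_L(\mu_0^n,\mu_1^n)<\infty$, pass to a subsequence along which the $\liminf$ is a limit, and pick near-optimal curves $(\mu^n_\cdot,\mathbf E^n_\cdot)$; the uniform $m_2$-bound on $\mu_\cdot^n$ and $\int_0^1|\mathbf E^n_t|(\Rtwod)\,dt\le\sqrt{2\mathcal A_n}$, combined with the equicontinuity estimate above, allow a Banach--Alaoglu/Arzel\`a--Ascoli extraction to a limit $(\mu_\cdot,\mathbf E_\cdot)$ still solving the continuity equation with endpoints $\mu_0,\mu_1$, and joint lsc of $\mathcal A$ gives $d_L(\mu_0,\mu_1)\le\mathcal A(\mu_\cdot,\mathbf E_\cdot)\le\liminf_n\mathcal A_n\le\liminf_n d_L(\mu_0^n,\mu_1^n)$. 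The same extraction applied to a minimising sequence of curves re-proves attainment of the infimum, and a constant-speed reparametrisation of a minimiser is a geodesic.

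For $(\mathscr{P}_\tau(\Rd),d_L)$, completeness follows in the usual way from the first and third bullets: a $d_L$-Cauchy sequence lies in $\PtwoE$ with $E=m_2(\tau)$, hence has a weak limit $\mu$; by lower semicontinuity and the triangle inequality $d_L(\mu,\tau)\le\liminf_n d_L(\mu^n,\tau)<\infty$, so $\mu\in\mathscr{P}_\tau(\Rd)$, and $d_L(\mu^n,\mu)\le\liminf_m d_L(\mu^n,\mu^m)\to0$. That the space is geodesic follows from attainment of the infimum together with the fact that the intermediate points $\mu_t$ of a constant-speed minimiser between $\mu_0,\mu_1\in\mathscr{P}_\tau(\Rd)$ satisfy $m_2(\mu_t)=m_2(\tau)$ and $d_L(\mu_t,\tau)\le d_L(\mu_t,\mu_0)+d_L(\mu_0,\tau)<\infty$, hence remain in $\mathscr{P}_\tau(\Rd)$. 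I expect the main obstacle to be precisely the non-locality in the compactness step: keeping the momentum measures $\mathbf E^n$ tight and, above all, passing to the limit in the weak formulation against the weighted, \emph{non}-compactly-supported fields $\tn\phi$ (for $-2<\gamma<0$ these grow at infinity, so one must truncate $\tn\phi$ and absorb the tail using the uniform moment bound on $\mu^n_t$), together with the companion truncation/Gr\"onwall argument needed to propagate the second-moment bound along curves known a priori only to have finite action.
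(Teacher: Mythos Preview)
Your outline follows essentially the same Benamou--Brenier/nonlinear-mobility route as the paper: pass to the momentum variable (the paper calls it $M$), exploit joint convexity and lower semicontinuity of the action $\mathcal A$, prove compactness under a uniform action bound, and deduce the metric/topological properties by the usual direct-method and concatenation arguments. Your equicontinuity estimate via $|\tn\phi|^2\lesssim_\phi\japangle{v}^2\japangle{v_*}^2$ is the content of the paper's Lemma~\ref{lem:lemmeasfun} applied with $F=\tn\phi$, and your proof of ``$d_L$-convergence $\Rightarrow$ weak convergence'' by testing at the endpoints is if anything more direct than the paper's argument (which instead extracts limits of the minimizing curves and shows the limit is constant).

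There is, however, one genuine gap you should be aware of. You claim that the second moment is \emph{conserved} along any finite-action curve, via a truncation/Gr\"onwall argument exploiting $\tn\japangle{\cdot}^2=0$. The paper proves exactly this only for $\gamma\in[-4,-2]$ (Lemma~\ref{lem:conserve}) and explicitly remarks afterwards that for $\gamma\in(-2,0]$ the weight $|v-v_*|^{1+\gamma/2}$ adds growth at infinity, so the truncation error does not vanish with only $m_2$-control. Concretely, for $\phi_R=\chi_R\japangle{\cdot}^2$ one has $\|\nabla\phi_R\|_\infty\sim R$ while $\mathrm{Lip}(\nabla\phi_R)\sim 1$, so on the tail $|\tn\phi_R|^2$ picks up a factor $R^2|v-v_*|^{2+\gamma}$; integrating against $\mu_t\otimes\mu_t$ on $\{|v|>R\}$ yields a bound of order $R^2\int_{|v|>R}|v|^2\,d\mu_t$, which does not tend to zero under $m_2\le E$ alone. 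Your Gr\"onwall loop therefore does not close for $\gamma>-2$.

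The paper sidesteps this entirely: it \emph{defines} $d_L$ as an infimum over curves in $\mathcal{GCE}_T^{2,E}$, i.e.\ curves that are \emph{required} to stay in $\mathscr{P}_{2,E}$ (Definition~\ref{def:GCE} and Definition~\ref{def:d_L}). With this built-in constraint there is nothing to propagate, and all the compactness/lsc arguments go through uniformly in $\gamma\in[-4,0]$. If you adopt the same convention your sketch becomes correct; alternatively, you would need either higher moments on $\mu$ or a sharper tail estimate than the one you indicate.
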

	The content of this theorem is essentially that our new proposed distance actually provides a meaningful topological structure on $\mathscr{P}_{2,E}(\R^d)$. Furthermore, the connection to $\epsilon$-solutions of Landau is established when considering the previous notions of slope and upper gradient with respect to $d_L$. General conditions which guarantee $d_L(\mu_0,\mu_1)<+\infty$ are presently unknown. In~\Cref{lem:conserve}, we will see that a necessary condition is that $\mu_0$ and $\mu_1$ have the same mean velocity. Moreover, for $\gamma\in[-4,-2]$, \Cref{lem:conserve} asserts that they should have the same second moment. In the construction of $d_L$ detailed in~\Cref{sec:Landist}, if $\mu = \mu(t)$ for $t\in[0,T]$ is an H-solution of Landau, then it is certainly true that $d_L(\mu(t), \, \mu(s))<+\infty$ for all $0 \le t, s \le T$.
	\begin{theorem}[Epsilon equivalence]
		\label{thm:epsequiv}
		Fix any $\epsilon, E>0, \gamma\in [-4,0]$. Assume that a curve $\mu : [0,T] \to \PtwoE$ has a density $\mu_t = f_t \mathcal{L}$. Then $\mu$ is a curve of maximal slope for $\mathcal{H}_\epsilon$ with respect to its upper gradient $\sqrt{D_{\mathcal{H}_\epsilon}}$ if and only if its density $f$ is an $\epsilon$-solution to the Landau equation. 
	\end{theorem}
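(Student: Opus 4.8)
The plan is to run the by-now-standard dictionary between curves of maximal slope and weak solutions, transporting all of the genuinely hard analysis into two chain-rule identities for the regularized entropy $\mathcal{H}_\epsilon$. The starting point is the Benamou--Brenier--type characterization of $d_L$ established in Section~\ref{sec:Landist}: a curve $\mu:[0,T]\to\PtwoE$ is $d_L$-absolutely continuous if and only if it solves a continuity equation $\partial_t\mu+\tfrac12\tn\cdot(V\mu\mu_*)=0$ for some Borel field $V$ with $\int_0^T\iiR|V_t|^2\,d\mu_t\,d\mu_{t,*}\,dt<\infty$, and then $|\dot\mu|^2(t)\le\tfrac12\iiR|V_t|^2\,d\mu_t\,d\mu_{t,*}$ for a.e.\ $t$, with equality for the minimal field. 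The key algebraic observation is that the choice $V_t=-\tn\firstvar{\mathcal{H}_\epsilon}{f}$ makes the right-hand side equal to $D_{\mathcal{H}_\epsilon}(f_t)$ and turns the continuity equation into exactly item (3) of Definition~\ref{def:wkwFLandau} with $\mathcal{F}=\mathcal{H}_\epsilon$; so the theorem reduces to showing that, along either class of curves, this particular $V$ is the one that appears.

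For the implication ``$\epsilon$-solution $\Rightarrow$ curve of maximal slope'': item (2) of Definition~\ref{def:wkwFLandau} says the field $V=-\tn\firstvar{\mathcal{H}_\epsilon}{f}$ has finite action, so $\mu$ is $d_L$-absolutely continuous with $|\dot\mu|^2(t)\le D_{\mathcal{H}_\epsilon}(f_t)$ a.e. I then use the chain rule along $\epsilon$-solutions,
\[
\frac{d}{dt}\mathcal{H}_\epsilon[f_t]=-D_{\mathcal{H}_\epsilon}(f_t),
\]
which is obtained by inserting $\phi=\firstvar{\mathcal{H}_\epsilon}{f}=G^\epsilon*\log(f*G^\epsilon)$ into the weak formulation; since this is not an admissible test function one approximates by truncation and mollification, and this is where the technical lemmata of Appendix~A and the entropy bound of Lemma~\ref{lem:carlen} enter. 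Integrating between $s$ and $t$ and splitting $D_{\mathcal{H}_\epsilon}$ into two halves gives
\[
\mathcal{H}_\epsilon[f_t]-\mathcal{H}_\epsilon[f_s]=-\int_s^t D_{\mathcal{H}_\epsilon}(f_r)\,dr\le-\frac12\int_s^t D_{\mathcal{H}_\epsilon}(f_r)\,dr-\frac12\int_s^t|\dot\mu|^2(r)\,dr,
\]
which is precisely the inequality of Definition~\ref{defn:cms}; monotonicity of $\mathcal{H}_\epsilon\circ\mu$ is immediate from $D_{\mathcal{H}_\epsilon}\ge0$, and $\sqrt{D_{\mathcal{H}_\epsilon}}$ being a strong upper gradient for $\mathcal{H}_\epsilon$ on $(\PtwoE,d_L)$ is itself a consequence of the same chain rule applied along an arbitrary $d_L$-absolutely continuous curve.

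For the converse, let $\mu$ be a curve of maximal slope with density $f$. Combining the strong-upper-gradient inequality with Young's inequality yields $\mathcal{H}_\epsilon[f_s]-\mathcal{H}_\epsilon[f_t]\le\frac12\int_s^t D_{\mathcal{H}_\epsilon}(f_r)\,dr+\frac12\int_s^t|\dot\mu|^2(r)\,dr$, while the maximal-slope inequality gives the reverse; hence all intermediate inequalities are equalities, so for a.e.\ $r$ one has $|\dot\mu|^2(r)=D_{\mathcal{H}_\epsilon}(f_r)$ and $\tfrac{d}{dt}\mathcal{H}_\epsilon[f_t]=-D_{\mathcal{H}_\epsilon}(f_t)$. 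Since $\mu$ is $d_L$-absolutely continuous there is a minimal velocity field $V$ with $\partial_t\mu+\tfrac12\tn\cdot(V\mu\mu_*)=0$ and $\tfrac12\iiR|V_t|^2 ff_*\,dvdv_*=|\dot\mu|^2(t)=D_{\mathcal{H}_\epsilon}(f_t)$. Applying the general chain rule along $d_L$-absolutely continuous curves gives $\tfrac{d}{dt}\mathcal{H}_\epsilon[f_t]=\tfrac12\iiR ff_*\,V_t\cdot\tn\firstvar{\mathcal{H}_\epsilon}{f}\,dvdv_*$; comparing this with $-D_{\mathcal{H}_\epsilon}(f_t)=-\tfrac12\iiR ff_*\,|\tn\firstvar{\mathcal{H}_\epsilon}{f}|^2\,dvdv_*$ and using the norm identity above forces $\iiR ff_*\,|V_t+\tn\firstvar{\mathcal{H}_\epsilon}{f}|^2\,dvdv_*=0$, i.e.\ $V_t=-\tn\firstvar{\mathcal{H}_\epsilon}{f}$ for $ff_*$-a.e.\ $(v,v_*)$ and a.e.\ $t$. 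Substituting into the continuity equation gives item (3) of Definition~\ref{def:wkwFLandau}; item (1) holds because $\mu_t\in\PtwoE$ by hypothesis, and item (2) follows from the resulting energy-dissipation equality, $\tfrac12\int_0^T D_{\mathcal{H}_\epsilon}(f_r)\,dr\le\mathcal{H}_\epsilon[f_0]-\mathcal{H}_\epsilon[f_T]<\infty$, finiteness of $\mathcal{H}_\epsilon$ coming from Lemma~\ref{lem:carlen}.

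The main obstacle is the pair of chain-rule identities for $\mathcal{H}_\epsilon$ — one along $\epsilon$-solutions and one along arbitrary $d_L$-absolutely continuous curves — together with the strong-upper-gradient property of $\sqrt{D_{\mathcal{H}_\epsilon}}$. The difficulty is twofold: the natural test function $\firstvar{\mathcal{H}_\epsilon}{f}=G^\epsilon*\log(f*G^\epsilon)$ is neither compactly supported nor a priori bounded, so it must be reached through a careful approximation; and the nonlocal operator $\tn$ carries the singular kernel $|v-v_*|^{2+\gamma}$, which for very soft potentials (with $\gamma$ as low as $-4$) has to be controlled using only the finite-dissipation and finite-moment bounds. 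The Maxwellian-type regularization is exactly what rescues the argument: convolution with $G^\epsilon$ provides a pointwise lower bound on $f*G^\epsilon$, hence an $L^\infty_{loc}$ bound on $\log(f*G^\epsilon)$, and exponential tail control, which together legitimize the truncation-and-mollification limits; the weak lower semicontinuity of $d_L$ from Theorem~\ref{them:existdL} then upgrades the resulting local estimate to a bona fide strong upper gradient.
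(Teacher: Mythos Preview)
Your proposal is correct and follows essentially the same strategy as the paper: both directions hinge on the chain rule for $\mathcal{H}_\epsilon$ along curves of finite action (Proposition~\ref{prop:strongupgradeps}), after which the forward implication is immediate and the converse goes through the equality case in Cauchy--Schwarz/Young to identify the minimal velocity field as $-\tn\firstvar{\mathcal{H}_\epsilon}{f}$. Two small corrections: Appendix~A concerns the auxiliary cut-off PDE used for Lemma~\ref{lem:Depsslope} in the JKO construction, not the chain rule---the latter is proved directly in Section~\ref{sec:EDE} via a time-regularization of the pair $(\mu,M)$ rather than truncation of the test function; and the strong-upper-gradient property of $\sqrt{D_{\mathcal{H}_\epsilon}}$ comes straight from the chain rule together with the Cauchy--Schwarz estimate of Lemma~\ref{lem:lemmeasfun} (see Remark~\ref{rk:stronguppergradient}), not from lower semicontinuity of $d_L$.
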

	From the numerical perspective, we can also construct $\epsilon$-solutions using the JKO scheme (see Section~\ref{sec:JKO}) which is the following
	\begin{theorem}[Existence of curves of maximal slope]
		\label{thm:existcms}
		For any $\epsilon,E>0, \gamma \in [-4,0]$, and initial data $\mu_0 \in \mathscr{P}_{2,E}(\Rd)$, there exists a curve of maximal slope in $\mathscr{P}_{2,E}(\Rd)$ for $\mathcal{H}_\epsilon$ with respect to its upper gradient $\sqrt{D_{\mathcal{H}_\epsilon}}$.
	\end{theorem}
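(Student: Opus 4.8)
\emph{Proof strategy.} I would construct the curve by the minimizing movement (JKO) scheme adapted to the Landau pseudo-metric. Fix $\mu_0\in\PtwoE$, let $\tau>0$ be a time step, set $\mu^0_\tau:=\mu_0$, and define recursively
\[
\mu^n_\tau\in\operatorname*{arg\,min}_{\nu\in\PtwoE}\Big\{\mathcal{H}_\epsilon[\nu]+\tfrac{1}{2\tau}\,d_L(\nu,\mu^{n-1}_\tau)^2\Big\}.
\]
The first point is that each step is well posed. By Lemma~\ref{lem:carlen} the functional $\mathcal{H}_\epsilon$ is well defined and finite on $\PtwoE$; moreover it is bounded below there by a constant depending only on $E$ and $\epsilon$, and it is weakly lower semicontinuous, being the convex, lower semicontinuous Boltzmann entropy precomposed with the weakly continuous map $\nu\mapsto\nu*G^\epsilon$. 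By Theorem~\ref{them:existdL}, $\nu\mapsto d_L(\nu,\mu^{n-1}_\tau)^2$ is weakly lower semicontinuous with weakly compact sublevels, and $\PtwoE$ is weakly closed since $m_2$ is weakly lower semicontinuous. The direct method then produces a minimizer, which automatically satisfies $d_L(\mu^n_\tau,\mu^{n-1}_\tau)<\infty$ because $\mu^{n-1}_\tau$ is an admissible competitor; hence the whole sequence stays in the $d_L$-component of $\mu_0$, i.e.\ in the complete geodesic space $\mathscr{P}_{\mu_0}(\Rd)\subset\PtwoE$ of Theorem~\ref{them:existdL}.

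Testing minimality of $\mu^n_\tau$ against $\mu^{n-1}_\tau$ gives the discrete dissipation inequality
\[
\mathcal{H}_\epsilon[\mu^n_\tau]+\frac{1}{2\tau}\sum_{k=1}^n d_L(\mu^k_\tau,\mu^{k-1}_\tau)^2\le\mathcal{H}_\epsilon[\mu_0],
\]
so $\sum_k d_L(\mu^k_\tau,\mu^{k-1}_\tau)^2\le 2\tau C_0$ with $C_0:=\mathcal{H}_\epsilon[\mu_0]-\inf_{\PtwoE}\mathcal{H}_\epsilon<\infty$. Introducing the piecewise constant interpolant $\bar\mu^\tau$ and De Giorgi's variational interpolant $\tilde\mu^\tau$, the classical variational interpolation estimate (\cite[Ch.~2--3]{ambrosio_gradient_2008-1}) yields the $\tau$-uniform bound
\[
\frac{1}{2}\int_0^T|\dot{\bar\mu}^\tau|^2(t)\,dt+\frac{1}{2}\int_0^T|\partial\mathcal{H}_\epsilon|^2(\tilde\mu^\tau_t)\,dt+\mathcal{H}_\epsilon[\bar\mu^\tau_T]\le\mathcal{H}_\epsilon[\mu_0],
\]
and from the first term with Cauchy--Schwarz one reads off $d_L$-equicontinuity of $\bar\mu^\tau$ of Hölder-$\tfrac12$ type up to an $O(\sqrt\tau)$ error.

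Since $d_L$-bounded sets are weakly compact and $d_L$-convergence implies weak convergence (Theorem~\ref{them:existdL}), a refined Arzel\`a--Ascoli argument extracts a subsequence along which $\bar\mu^\tau_t$, and therefore also $\tilde\mu^\tau_t$, converge weakly to some $\mu_t\in\PtwoE$ for every $t\in[0,T]$; weak lower semicontinuity of $d_L$ then promotes the equicontinuity to $d_L(\mu_t,\mu_s)\le\sqrt{C_0}\,|t-s|^{1/2}$, so $\mu$ is a $d_L$-absolutely continuous curve in $\mathscr{P}_{\mu_0}(\Rd)$. One then passes the discrete dissipation inequality to the limit: weak lower semicontinuity of $\mathcal{H}_\epsilon$ controls the endpoint term, lower semicontinuity of the metric derivative (again via lower semicontinuity of $d_L$) controls $\int|\dot\mu|^2$, and for the slope term one uses the comparison $|\partial\mathcal{H}_\epsilon|^2\ge D_{\mathcal{H}_\epsilon}$ from Section~\ref{sec:EDE} together with Fatou's lemma and the weak lower semicontinuity of $D_{\mathcal{H}_\epsilon}$ on $\PtwoE$ to get $\int_0^t D_{\mathcal{H}_\epsilon}(\mu_r)\,dr\le\liminf_{\tau\to0}\int_0^t|\partial\mathcal{H}_\epsilon|^2(\tilde\mu^\tau_r)\,dr$. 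Combining these yields, for all $0\le s\le t\le T$,
\[
\mathcal{H}_\epsilon[\mu_t]+\frac{1}{2}\int_s^t|\dot\mu|^2(r)\,dr+\frac{1}{2}\int_s^t D_{\mathcal{H}_\epsilon}(\mu_r)\,dr\le\mathcal{H}_\epsilon[\mu_s],
\]
which in particular forces $\mathcal{H}_\epsilon\circ\mu$ to be non-increasing. Since $\sqrt{D_{\mathcal{H}_\epsilon}}$ is a strong upper gradient for $\mathcal{H}_\epsilon$ along $d_L$-absolutely continuous curves (Section~\ref{sec:EDE}), $\mu$ is a curve of maximal slope in the sense of Definition~\ref{defn:cms}, proving the theorem.

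The one genuinely non-classical point, which I expect to be the main obstacle, is the weak lower semicontinuity of $D_{\mathcal{H}_\epsilon}$ together with the comparison $|\partial\mathcal{H}_\epsilon|^2\ge D_{\mathcal{H}_\epsilon}$, because the Landau weight $|v-v_*|^{2+\gamma}$ is singular on the diagonal for $-4\le\gamma<0$ and the dissipation is a quadratic form in $\mu$. The way around it is to exploit the regularization: $\nu*G^\epsilon$ is smooth and bounded below on compact sets with exponential tails, so the field $\tn\,\delta\mathcal{H}_\epsilon/\delta\nu$ obeys $\epsilon$-dependent but $\nu$-uniform pointwise bounds, while the at most quadratic growth of the weight at infinity is absorbed by the uniform second moment $m_2\le E$; passing to the limit in the bilinear dissipation then reduces, through these bounds, to weak continuity of the convolution maps. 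Everything else is an adaptation of the metric gradient flow theory of \cite{ambrosio_gradient_2008-1}, the only extra care being to carry the moment constraint defining $\PtwoE$ through the compactness arguments.
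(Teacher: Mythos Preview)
Your overall architecture (JKO scheme in $(\mathscr{P}_{\mu_0},d_L)$, discrete energy estimate, De Giorgi interpolation, Arzel\`a--Ascoli via Theorem~\ref{them:existdL}, then pass to the limit in the EDI) is exactly the paper's route, and your verification of the abstract hypotheses (lower bound and weak lsc of $\mathcal{H}_\epsilon$, weak compactness of $d_L$-sublevels) matches Proposition~\ref{prop:LJKO}.

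The substantive gap is in how you treat the slope term. You invoke the inequality $|\partial\mathcal{H}_\epsilon|^2\ge D_{\mathcal{H}_\epsilon}$ as if it came from Section~\ref{sec:EDE}. It does not: Section~\ref{sec:EDE} gives the chain rule (Proposition~\ref{prop:strongupgradeps}) and hence that $\sqrt{D_{\mathcal{H}_\epsilon}}$ is a strong upper gradient, which morally points the \emph{other} way. The inequality $\sqrt{D_{\mathcal{H}_\epsilon}}\le|\partial^-\mathcal{H}_\epsilon|$ is Lemma~\ref{lem:Depsslope}, and its proof is the most delicate step of the whole section: one has to produce, for each $\mu_0$, a short-time curve whose entropy decay realizes (a truncated version of) $D_{\mathcal{H}_\epsilon}(\mu_0)$, which the paper does by solving an auxiliary cut-off transport equation (Theorem~\ref{thm:auxpde} in Appendix~\ref{sec:auxpde}). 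Your proposal does not supply any such curve, so the comparison is unjustified.

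There is also a structural difference. You apply the slope--dissipation comparison \emph{before} the limit and then need weak lower semicontinuity of $\mu\mapsto D_{\mathcal{H}_\epsilon}(\mu)$ on $\PtwoE$; you flag this as the main obstacle and sketch why the $\epsilon$-regularization should save you. The paper avoids this issue entirely: it works with the \emph{relaxed} slope $|\partial^-\mathcal{H}_\epsilon|$, which is $\sigma$-lower semicontinuous by construction, checks that $|\partial^-\mathcal{H}_\epsilon|$ is a strong upper gradient (Lemma~\ref{lem:slopeissug}, using the chain rule and Lemma~\ref{lem:Depsslope}), invokes \cite[Theorem~2.3.3]{ambrosio_gradient_2008-1} to obtain a curve of maximal slope for $|\partial^-\mathcal{H}_\epsilon|$, and only then applies $D_{\mathcal{H}_\epsilon}\le|\partial^-\mathcal{H}_\epsilon|^2$ to the limit curve. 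This ordering means no lsc of $D_{\mathcal{H}_\epsilon}$ is ever needed. Your variant is plausible (Lemma~\ref{lem:esttnfirstvar} and the arguments around~\eqref{eq:uniformconvergence} suggest $D_{\mathcal{H}_\epsilon}$ is in fact weakly continuous on $\PtwoE$), but it is extra work the paper's route sidesteps.
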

\begin{remark}
The curves constructed in~\Cref{thm:existcms} do not necessarily have a density with respect to Lebesgue measure; the regularization allows $\mathcal{H}_\epsilon[\mu]<+\infty$ without $\mu$ being absolutely continuous with respect to Lebesgue measure. Moreover, uniqueness of such curves is beyond the scope of the present work although it would be interesting to see what convexity properties are available for $\mathcal{H}_\epsilon$ with respect to $d_L$. This could also shed some insight into the available convexity of $\mathcal{H}$ with respect to $d_L$.
\end{remark}
	\begin{remark}
		The choice of an exponential convolution kernel $G^\epsilon$ for the regularized entropy $\mathcal{H}_\epsilon$ is perhaps unnatural compared to the Maxwellian regularization $G^{2,\epsilon}$. We discuss in more detail the estimates that fail using $G^{2,\epsilon}$ in Remark~\ref{rk:Glogdiff} as it pertains to Theorem~\ref{thm:epsequiv}. With respect to Theorem~\ref{thm:existcms}, the general construction of \textit{some} curve can be done even with the Maxwellian regularization. However, due to the same lack of estimates, this curve might not be a curve of maximal slope with respect to $\sqrt{D_{\mathcal{H}_\epsilon}}$. This is discussed in Remark~\ref{rk:JKOremarks}.
	\end{remark}
	Motivated by recent numerical experiments~\cite{carrillo_particle_2019}, Theorems~\ref{thm:epsequiv} and~\ref{thm:existcms} provide the theoretical basis to this $\epsilon$ approximated Landau equation. In the limit $\epsilon\to0$, more assumptions are required.
	\begin{theorem}[Full equivalence]
		\label{thm:fullequiv}
		We fix $d=3$ and $\gamma\in (-3,0]$. Suppose that for some $T>0$, a curve $\mu : [0,T] \to \mathscr{P}(\R^3)$ has a density $\mu_t = f_t \mathcal{L}$ that satisfies the following set of assumptions
		\begin{enumerate}[label=(\textbf{A\arabic*})]
			\item \label{itm:ass1}(Moments and $L^p$) Assume that for some $0 < \eta \leq \gamma + 3$, we have
			\[
			\japangle{v}^{2-\gamma} f_t(v) \in L_t^\infty(0,T;L_v^1\cap L_v^\frac{3-\eta}{3+\gamma-\eta}(\R^3)).
			\]
			\item \label{itm:ass2}(Finite entropy) We assume that the initial entropy is finite
		\[
		\mathcal{H}[f_0] = \int_{\R^3} f_0 \log f_0 < +\infty.
		\]	
			\item \label{itm:ass3}(Finite entropy-dissipation) We assume that the entropy-dissipation of $f$ is integrable in time
			\begin{align*}
				&D(f_t) = D_{\mathcal{H}}(f_t) = \frac{1}{2} \iiRsix ff_* \left|
				\tn 
				\firstvar{\mathcal{H}}{f}
				\right|^2 dvdv_* =  	\\
				&\frac{1}{2} \iiRsix ff_* |v-v_*|^{\gamma+2} |\Pi[v-v_*](\nabla \log f - \nabla_*\log f_*)|^2 dvdv_* \in L_t^1(0,T).
			\end{align*} 
		\end{enumerate}
		Then $\mu$ is a curve of maximal slope for $\mathcal{H}$ with respect to its upper gradient $\sqrt{D}$ if and only if its density $f$ is a weak solution of the Landau equation.
	\end{theorem}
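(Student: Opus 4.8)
The plan is to establish the two implications separately, treating the $\epsilon\to0$ passage as a limiting argument built on top of Theorem~\ref{thm:epsequiv}, while using the assumptions~\ref{itm:ass1}--\ref{itm:ass3} solely to control the non-local and singular terms that the $\epsilon$-regularization previously tamed. For the forward direction, suppose $\mu$ is a curve of maximal slope for $\mathcal{H}$ with respect to $\sqrt{D}$. Since a curve of maximal slope satisfies the Energy Dissipation Equality (the inequality in Definition~\ref{defn:cms} together with the reverse inequality coming from $\sqrt{D}$ being a strong upper gradient), we have both $\mathcal{H}[f_t]+\frac12\int_0^t D(f_s)\,ds+\frac12\int_0^t|\dot f|^2_{d_L}(s)\,ds = \mathcal{H}[f_0]$ and the identification of the metric derivative with the minimal-action velocity field. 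First I would show that $|\dot f|^2_{d_L}(t) = D(f_t)$ for a.e. $t$: the inequality $|\dot f|^2_{d_L}\le D$ follows because $U(f)$ from~\eqref{eq:gradflow} is an admissible velocity for the continuity equation, while the chain rule (the content of Appendix~A) gives $\frac{d}{dt}\mathcal{H}[f_t]\ge -\sqrt{D(f_t)}\,|\dot f|_{d_L}(t)$, and combined with the EDE this forces equality and $V = \tn\frac{\delta\mathcal H}{\delta f}$ as the optimal field. Then the weak formulation of the Landau equation, parts (1)--(3) of Definition~\ref{def:wkwFLandau}, is read off: (1) follows from $\mu_t\in\mathscr P_{2,E}$ along the curve (with the uniform second moment bound deduced from~\ref{itm:ass1}), (2) is exactly~\ref{itm:ass3}, and (3) is the continuity equation with the identified velocity field, tested against $\phi\in C_c^\infty$.

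For the reverse direction, suppose $f$ is a weak $\mathcal H$-solution satisfying~\ref{itm:ass1}--\ref{itm:ass3}. The goal is to produce the EDI of Definition~\ref{defn:cms}. I would regularize: set $f^\epsilon_t = f_t * G^\epsilon$ and verify that $f^\epsilon$ is (close to) an $\epsilon$-solution, or more directly, test the weak form of the Landau equation with $\phi = \frac{\delta\mathcal H_\epsilon}{\delta f}(f) = G^\epsilon*\log(f*G^\epsilon)$ — this is a legitimate (smooth, but not compactly supported) test function, whose use must be justified by a density/truncation argument exploiting the moment and $L^p$ bounds in~\ref{itm:ass1}. This yields a regularized entropy identity $\frac{d}{dt}\mathcal H_\epsilon[f_t] = -\frac12\iint f f_* \tn\phi\cdot\tn\frac{\delta\mathcal H}{\delta f}\,dv\,dv_*$, and the main analytic task is to let $\epsilon\to0$ in every term: $\mathcal H_\epsilon[f_t]\to\mathcal H[f_t]$ by Lemma~\ref{lem:carlen} and~\ref{itm:ass2}; the right-hand side converges to $-D(f_t)$ using~\ref{itm:ass3} together with the commutator-type estimates controlling $\tn(G^\epsilon*\log f^\epsilon - \log f)$ against the singular weight $|v-v_*|^{2+\gamma}$ — this is where the precise exponent $\frac{3-\eta}{3+\gamma-\eta}$ in~\ref{itm:ass1} is tuned so that a Hardy--Littlewood--Sobolev or weighted Young estimate closes for $\gamma\in(-3,0]$. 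Once the entropy production identity $\mathcal H[f_t]+\int_s^t D(f_r)\,dr = \mathcal H[f_s]$ is in hand, I combine it with $|\dot f|^2_{d_L}\le D$ (again since $U(f)$ is admissible) to get $\mathcal H[f_t]-\mathcal H[f_s]+\frac12\int_s^t D\,dr+\frac12\int_s^t|\dot f|^2_{d_L}\,dr \le \mathcal H[f_t]-\mathcal H[f_s]+\int_s^t D\,dr = 0$, which is precisely the curve-of-maximal-slope inequality; absolute continuity of $t\mapsto\mu_t$ in $d_L$ follows from finiteness of $\int_0^T D\,dt$.

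The main obstacle, I expect, is the $\epsilon\to0$ passage in the dissipation term: one must show $\iint f f_* |v-v_*|^{2+\gamma}|\Pi(\nabla\log f^\epsilon - \nabla_*\log f^\epsilon_*) - \Pi(\nabla\log f - \nabla_*\log f_*)|\cdot(\text{bounded}) \to 0$, which requires simultaneously (i) uniform-in-$\epsilon$ integrability of $f^\epsilon f^\epsilon_* |v-v_*|^{2+\gamma}|\tn\log f^\epsilon|^2$, (ii) a.e. or strong convergence of $\nabla\log f^\epsilon\to\nabla\log f$ in a suitably weighted space, and (iii) control of the mismatch between $\tn\frac{\delta\mathcal H_\epsilon}{\delta f}$ (which involves the double convolution $G^\epsilon*\log(f*G^\epsilon)$) and $\tn\log f$. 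The singular kernel $|v-v_*|^{2+\gamma}$ with $\gamma$ close to $-3$ makes this delicate precisely near the diagonal $v=v_*$, and the role of assumption~\ref{itm:ass1} is to provide enough local integrability of $f$ (via the $L^{(3-\eta)/(3+\gamma-\eta)}$ bound weighted by $\japangle{v}^{2-\gamma}$) to absorb this singularity through a weighted convolution inequality. A secondary but nontrivial point is justifying the chain rule $\frac{d}{dt}\mathcal H[f_t] = -\frac12\iint f f_*\,\tn\frac{\delta\mathcal H}{\delta f}\cdot\tn\frac{\delta\mathcal H}{\delta f}$ directly at $\epsilon=0$, i.e. that $\log f$ is an admissible test function in the weak formulation — this is handled by the truncation lemmata referenced as being in Appendix~A, adapted from the $\epsilon$-regularized setting, and again leans on~\ref{itm:ass1}--\ref{itm:ass3} to control boundary contributions at infinity and near $\{f=0\}$.
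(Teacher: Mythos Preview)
Your reverse direction (weak solution $\Rightarrow$ curve of maximal slope) is broadly on the right track, but your forward direction contains a genuine circularity, and in both directions you are missing the specific mechanism by which the $\epsilon\to 0$ limit in the dissipation is actually closed.

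\textbf{The circularity.} In the forward direction you write that ``$|\dot f|^2_{d_L}\le D$ follows because $U(f)$ from~\eqref{eq:gradflow} is an admissible velocity for the continuity equation.'' But $U(f)$ being admissible means precisely that $(\mu,-ff_*\tn\log f)$ solves the grazing continuity equation, i.e.\ that $f$ is a weak solution of Landau --- which is what you are trying to prove. The paper avoids this entirely: it treats both implications uniformly by first establishing the \emph{chain rule at $\epsilon=0$} (Claim~\ref{claim:fullchainrule}) for an \emph{arbitrary} grazing rate $M$ with finite action, namely
\[
\mathcal{H}[\mu_r]-\mathcal{H}[\mu_s]=\tfrac12\int_s^r\iiRsix \tn\tfrac{\delta\mathcal H}{\delta\mu}\cdot dM_t\,dt,
\]
and then simply replays the short argument of Theorem~\ref{thm:epsequiv}: for the forward direction one takes the \emph{optimal} $M$ from Proposition~\ref{prop:metvel}, applies Cauchy--Schwarz and Young to the chain rule, and the curve-of-maximal-slope inequality forces equality, which in turn identifies $M=-ff_*\tn\log f$. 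No prior knowledge that $U(f)$ is admissible is used.

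\textbf{The dissipation limit.} You correctly isolate the hard step as $\int_0^T D_\epsilon(f_t)\,dt\to\int_0^T D(f_t)\,dt$, but your proposed tools (``commutator-type estimates'', ``HLS or weighted Young'') do not by themselves close the argument. The paper's route is quite specific: expand $|\tn\frac{\delta\mathcal H_\epsilon}{\delta\mu}|^2$ using the identity $|x|^2\,y\cdot\Pi[x]y=|x\times y|^2$ (Lemma~\ref{lem:linalgcross}) into four pieces, two of which are \emph{cross} Fisher terms $|v\times(b^\epsilon*a^\epsilon)|^2$. Each piece is then handled by iterating the Extended DCT together with the self-adjointness of convolution by the radial kernel $G^\epsilon$ and Jensen's inequality for $(F,f)\mapsto|F|^2/f$; the weighted-Young estimate you have in mind appears only as Lemma~\ref{lem:Jf}, which turns $\int f_*|v-v_*|^\gamma\,dv_*$ into $\japangle{v}^\gamma$ via~\ref{itm:ass1}. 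The final majorant that makes the whole scheme converge is not~\ref{itm:ass3} alone but Desvillettes' bound (Theorem~\ref{thm:desvillettes16}), which controls both the weighted Fisher information $\int\japangle{v}^\gamma|\nabla f|^2/f$ \emph{and} the cross term $\int\japangle{v}^\gamma|v\times\nabla f|^2/f$ by $1+D(f)$. Without this ingredient your step~(i) (uniform-in-$\epsilon$ integrability) does not go through, because the natural majorant is a weighted Fisher-type quantity, not $D$ itself. Finally, Appendix~A concerns the auxiliary cutoff PDE used in Lemma~\ref{lem:Depsslope}, not the chain rule; the chain rule machinery lives in Proposition~\ref{prop:strongupgradeps} and Section~\ref{sec:epsto0}.
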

	
	\begin{remark}
		When $\gamma \in [-2,0]$, it is known that for suitable initial data
		(lying in weighted $L^p$ spaces for $p$ large enough and for a
		sufficient power-like weight), weak solutions of Landau equation
		satisfying~\ref{itm:ass1}--\ref{itm:ass3} are known to exist (and to be strong and
		unique under extra conditions). We refer to \cite{wu_global_2013},
		and Appendix B of \cite{LD_PX} when $\gamma > -2$, for
		details.
		
		When $\gamma \in (-3, -2)$, Assumption~\ref{itm:ass1} is not known to hold for
		global weak solutions with large initial data. Solutions satisfying~\ref{itm:ass1}--\ref{itm:ass3} are nevertheless known to exist for initial data close to
		equilibrium (cf. \cite{guo_landau_2002}, in a much larger spatially
		inhomogeneous context), or in the Coulomb case $\gamma = -3$ (in that case $\frac{3 - \eta}{3 + \gamma - \eta}$ being replaced by $\infty$) for large initial data, but on specific intervals of times only (\cite{DHJ20,AP77}).
	
	The focus on the Maxwellian and soft potential regime $\gamma\le 0$ here is motivated by building a gradient flow framework to address the open questions for Landau. The hard potential case $\gamma\in(0,1)$ has already been studied in detail by the third author and Villani~\cite{desvillettes_spatially_2000,desvillettes_spatially_2000-1}. We believe that our results also carry to the hard potentials. In particular, the exponents in assumption~\ref{itm:ass1} should be modified to
	\[
	\japangle{v}^{2+\gamma}f_t(v) \in L_t^\infty(0,T; \, L_v^1(\R^3)), \quad f_t(v)\in L_t^\infty(0,T;\, L_v^{\frac{3}{3-\gamma}+}(\R^3)), \quad 0 < \gamma< 1.
	\]
	We emphasize that these conditions are guaranteed since the required moments and $L^p$ integrability are propagated from appropriate initial data when $\gamma> 0$~\cite{desvillettes_spatially_2000,desvillettes_spatially_2000-1}.	This condition appears in~\cite[Corollary 2.7]{desvillettes_entropy_2016}. It is the hard potential version of~\Cref{thm:desvillettes16} which is crucial to the proof of~\Cref{thm:fullequiv}. Much of our analysis remains the same, however the space $\mathscr{P}_2$ should be changed to $\mathscr{P}_{2+\gamma}$ cohering with the moment condition above and trivializing~\Cref{lem:Jf}, for example.
\end{remark}
	
	It is an open problem to find the range of values $\gamma$ under which we can show the existence of curves of maximal slope for the original Landau equation
	\eqref{eq:lan}, or equivalently, contructing solutions of the original Landau equation passing $\epsilon \to 0$ in Theorem \ref{thm:existcms}. Some of the difficulties to achieve this result are the propagation of moments for the regularized Landau equation uniformly in $\epsilon$ and the compactness of sequences with bounded in $\epsilon$ regularized entropy dissipation $D_{\mathcal{H}_\epsilon}$. The rest of this work is devoted to show the main four theorems in the next four sections.
	
	\section{The Landau metric $d_L$}
	\label{sec:Landist}
	Our approach to defining the distance $d_L$ mentioned in Theorem~\ref{them:existdL} closely follows the dynamic formulation of transport distances originally due to Benamou and Brenier~\cite{benamou_computational_2000} and further extended by Dolbeault, Nazaret, and Savar\'e~\cite{dolbeault_new_2009}. We also refer the reader to Erbar~\cite{erbar_gradient_2016} for a similar approach.
	\subsection{Grazing continuity equation}
	\label{sec:GCE}
	We consider for $\gamma\in[-4,0]$ the \textit{grazing continuity equation}:
	\begin{equation}\label{eq:GCE}
		\partial_t \mu_t + \frac{1}{2}\tilde{\nabla}\cdot M_t = 0, \quad \text{ in } (0,T) \times\mathbb{R}^{d},
	\end{equation}
	which is interpreted in the sense of distributions. For every $\phi \in C_c^\infty ((0,T)\times \Rd)$, we have
	\begin{equation*}
		\int_0^T \int_{\mathbb{R}^d} \partial_t \phi(t,v) d\mu_t(v)  dt + \frac{1}{2}\int_0^T \iint_{\mathbb{R}^{2d}} [\tilde{\nabla}\phi](t,v,v_*) dM_t(v,v_*) dt = 0.
	\end{equation*}
	Another formulation(see~\Cref{lem:ctsrep}) is the following for $\zeta\in C_c^\infty(\Rd)$, 
	\begin{equation}
		\label{eq:GCEtdiff}
		\frac{d}{dt}\iR \zeta(v) d\mu_t(v) = \frac{1}{2}\iiR \tn \zeta(v,v_*)dM_t(v,v_*).
	\end{equation}
	The curves $(\mu_t)_{t\in[0,T]}, (M_t)_{t\in[0,T]}$ are Borel families of measures belonging to $\mathcal{M}_+$ and $\mathcal{M}^d$ respectively. We will refer to $\mu$ from the pair as a \textit{curve} and $M$ as a \textit{grazing rate}. For some regularity properties, we will also need to assume the following moment condition
	\begin{equation}
		\label{eq:finitemuM}
		\int_0^T \iiR (1+|v|+|v_*|)d|M_t|(v,v_*)dt < \infty.
	\end{equation} 
	We first establish some a-priori properties of solutions to the grazing continuity equation.
	\begin{lemma}[Continuous representative]\label{lem:ctsrep}
		For families $(\mu_t),(M_t)$ satisfying the grazing continuity equation and the finite moment condition \eqref{eq:finitemuM}, there exists a unique weakly* continuous representative curve $(\tilde{\mu}_t)_{t\in[0,T]}$ such that $\tilde{\mu}_t = \mu_t$ a.e. $t\in[0,T]$. Furthermore, for any $\phi \in C_c^\infty ((0,T) \times \mathbb{R}^d)$ and any $t_0,t_1 \in [0,T]$, we have the following formula
		\begin{equation*}
			\iR \phi_{t_1}d\tilde{\mu}_{t_1} - \iR \phi_{t_0}d\tilde{\mu}_{t_0} = \int_{t_0}^{t_1}\iR \partial_t \phi d\mu_tdt + \frac{1}{2}\int_{t_0}^{t_1}\iiR \tilde{\nabla}\phi dM_tdt.
		\end{equation*}
	\end{lemma}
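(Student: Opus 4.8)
The plan is to adapt the classical argument for continuity equations (cf.\ \cite[Lemma~8.1.2]{ambrosio_gradient_2008-1} and \cite{erbar_gradient_2016}) to the non-local operator $\tn$. First I would fix $\zeta\in C_c^\infty(\Rd)$ and set $g_\zeta(t):=\iR\zeta\,d\mu_t$, a Borel function on $[0,T]$. The crucial preliminary observation is the bound $|\tn\zeta(v,v_*)|\lesssim_{\zeta,\gamma}1+|v|+|v_*|$: writing $\tn\zeta=|v-v_*|^{1+\gamma/2}\Pi[v-v_*](\nabla\zeta(v)-\nabla\zeta(v_*))$ and using $\|\Pi\|\le1$, near the diagonal one estimates $|\nabla\zeta(v)-\nabla\zeta(v_*)|\le\|D^2\zeta\|_\infty|v-v_*|$, so that $|\tn\zeta|\lesssim|v-v_*|^{2+\gamma/2}\lesssim1$ since $2+\gamma/2\ge0$ is exactly the hypothesis $\gamma\ge-4$; away from the diagonal $|v-v_*|^{1+\gamma/2}\le|v-v_*|$ because $1+\gamma/2\le1$, giving linear growth. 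Hence, by the moment condition~\eqref{eq:finitemuM}, the function $h_\zeta(t):=\tfrac12\iiR\tn\zeta\,dM_t$ lies in $L^1(0,T)$, and~\eqref{eq:GCEtdiff} says precisely that $g_\zeta'=h_\zeta$ in $\mathcal D'(0,T)$. Since $h_\zeta\in L^1$, $g_\zeta$ agrees a.e.\ with an absolutely continuous function $\tilde g_\zeta$ on $[0,T]$ satisfying $\tilde g_\zeta(t_1)-\tilde g_\zeta(t_0)=\int_{t_0}^{t_1}h_\zeta(s)\,ds$.

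Next I would glue these one-dimensional statements over a countable set $\mathcal D\subset C_c^\infty(\Rd)$ that is dense (with control of supports) in $C_c(\Rd)$, and let $N:=\bigcup_{\zeta\in\mathcal D}\{t:g_\zeta(t)\neq\tilde g_\zeta(t)\}$, a Lebesgue-null set. For $t\notin N$ the positive linear functional $\zeta\mapsto g_\zeta(t)$ on $\mathcal D$ is represented, via Riesz--Markov, by the Radon measure $\mu_t$ itself; applying the first step to a cutoff $\zeta\ge\mathbf 1_K$ also yields $\mu_t(K)\le\tilde g_\zeta(t)$ for a.e.\ $t$, hence, enlarging $N$ by the corresponding null sets, $\sup_{t\notin N}\mu_t(K)<\infty$ for every compact $K$. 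For $t\in N$ I would define $\tilde g_\zeta(t)$ by continuity in $t$ (each $\tilde g_\zeta$ being continuous), note that positivity and linearity pass to the limit, and let $\tilde\mu_t$ be the Radon measure representing $\zeta\mapsto\tilde g_\zeta(t)$; then $\tilde\mu_t=\mu_t$ off $N$. Since $t\mapsto\iR\zeta\,d\tilde\mu_t=\tilde g_\zeta(t)$ is continuous for every $\zeta\in\mathcal D$ and these are equibounded on compacta, a density argument upgrades this to weak-$*$ continuity of $t\mapsto\tilde\mu_t$ on $[0,T]$. Uniqueness is immediate: two weak-$*$ continuous curves coinciding a.e.\ must agree everywhere, because for each $\zeta\in C_c(\Rd)$ the two continuous functions $t\mapsto\iR\zeta\,d\tilde\mu_t$ agree a.e.

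Finally, for the integral formula I would fix $\phi\in C_c^\infty((0,T)\times\Rd)$ and $0\le t_0\le t_1\le T$, and test~\eqref{eq:GCE} against $\phi(t,v)\psi_\delta(t)$, where $\psi_\delta:=\mathbf 1_{[t_0,t_1]}*\eta_\delta$ is a smooth time-mollification of the indicator (admissible since $\phi$ vanishes near $t=0,T$). Using $\partial_t(\phi\psi_\delta)=\psi_\delta\partial_t\phi+\psi_\delta'\phi$ and $\tn(\phi\psi_\delta)=\psi_\delta\tn\phi$ gives
\[ \int_0^T\psi_\delta(t)\iR\partial_t\phi\,d\mu_t\,dt+\int_0^T\psi_\delta'(t)\iR\phi_t\,d\mu_t\,dt+\frac12\int_0^T\psi_\delta(t)\iiR\tn\phi\,dM_t\,dt=0. \]
Letting $\delta\to0$: the first and third terms converge by dominated convergence (using~\eqref{eq:finitemuM}, the bound on $\tn\phi$, and the local mass bound above), while $\psi_\delta'=\eta_\delta(\cdot-t_0)-\eta_\delta(\cdot-t_1)$, so the middle term tends to $\iR\phi_{t_0}\,d\tilde\mu_{t_0}-\iR\phi_{t_1}\,d\tilde\mu_{t_1}$ by continuity of $t\mapsto\iR\phi_t\,d\tilde\mu_t$. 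Rearranging yields the stated identity. The only genuinely non-routine point is the bound on $\tn\zeta$ in the first paragraph: one must exploit the \emph{difference} $\nabla\zeta(v)-\nabla\zeta(v_*)$, rather than a single gradient, to absorb the diagonal singularity of the weight $|v-v_*|^{1+\gamma/2}$ when $\gamma<-2$; this is precisely why the restriction $\gamma\ge-4$ enters and why~\eqref{eq:finitemuM} is the natural hypothesis. Everything else is the standard absolutely-continuous-representative machinery.
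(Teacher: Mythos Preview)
Your proposal is correct and follows essentially the same approach as the paper: both identify the key step as the bound $|\tn\zeta|\lesssim_{\zeta,\gamma}1+|v|+|v_*|$ (obtained via a mean-value estimate on $\nabla\zeta-\nabla_*\zeta_*$ to cancel the diagonal singularity of the weight when $\gamma<-2$), and then defer to the standard machinery of \cite[Lemma~8.1.2]{ambrosio_gradient_2008-1}. The only cosmetic difference is that the paper organizes the estimate by splitting on the range of $\gamma$ (using $\|\nabla\zeta\|_\infty$ for $\gamma\in[-2,0]$ and $\|D^2\zeta\|_\infty$ for $\gamma\in[-4,-2)$), whereas you split near/far the diagonal; both yield the same control.
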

	\begin{proof}
		This proof is nearly identical to~\cite[Lemma 8.1.2]{ambrosio_gradient_2008-1}. There, it was crucial to estimate the distributional time derivative of $t\mapsto \mu_t$. We perform the analogous estimate here to highlight the difference in our context. Fix $\zeta\in C_c^\infty(\Rd)$ and consider the map
		\[
		t\in (0,T) \mapsto \mu_t(\zeta) = \iR \zeta(v) d\mu_t(v) \in \R.
		\]
		According to~\eqref{eq:GCEtdiff}, the distributional time derivative is
		\begin{align*}
			\dot{\mu}_t(\zeta) = \frac{1}{2}\iiR \tn \zeta dM_t(v,v_*) = \frac{1}{2}\iiR |v-v_*|^{1+\frac{\gamma}{2}}\Pi[v-v_*](\nabla\zeta - \nabla_* \zeta_*)dM_t(v,v_*).
		\end{align*}
	Depending on the values of $\gamma$ above or below -2, the integrand can be estimated
\begin{align*}
&\quad \left||v-v_*|^{1+\frac{\gamma}{2}}\Pi[v-v_*](\nabla\zeta - \nabla_* \zeta_*)\right| 	\\
&\le \left\{
\begin{array}{cl}
2^{1+\frac{\gamma}{2}}\sup_{w\in\Rd} |\nabla \zeta(w)| (|v|^{1+\frac{\gamma}{2}}+|v_*|^{1+\frac{\gamma}{2}}), 	&\gamma\in[-2,0] 	\\
\sup_{w\in\Rd} |D^2\zeta(w)| |v-v_*|^{2+\frac{\gamma}{2}}, 	&\gamma\in[-4,-2)
\end{array}
\right..
\end{align*}	
Consequently, using the moment condition~\eqref{eq:finitemuM}, we have the following estimates depending on $\gamma\in[-4,0]$,
		\begin{align*}
			|\dot{\mu}_t(\zeta)| &\lesssim \left\{
			\begin{array}{cl}
				\sup_{w\in\Rd}|\nabla\zeta(w)|\iiR  (1+|v|+|v_*|)d|M_t|(v,v_*),    &\gamma\in[-2,0]  \\
				\sup_{w\in\Rd}|D^2\zeta(w)|\iiR (1+|v|+|v_*|)d|M_t|(v,v_*),      &\gamma\in[-4,-2) 
			\end{array}
			\right..
		\end{align*}
		The rest of the proof proceeds as in~\cite[Lemma 8.1.2]{ambrosio_gradient_2008-1} using the $C^2$-norm of $\zeta$ for the soft potentials $\gamma\in[-4,-2)$ as opposed to their $C^1$ control of $\zeta$.
	\end{proof}
	\begin{lemma}[Conservation lemma]\label{lem:conserve}
		Fix $\gamma\in[-4,0]$ and let $(\mu_t)_{t\in[0,T]}, (M_t)_{t\in[0,T]}$ be Borel families of measures in $\mathcal{M}_+,\;\mathcal{M}^d$ respectively satisfying~\eqref{eq:GCE} and the moment condition~\eqref{eq:finitemuM}. Assume further that $(\mu_t)_{t\in[0,T]}$ is weakly* continuous with respect to $t$. We have that mass and momentum are conserved;
		\[
		\mu_t(\mathbb{R}^d) = \mu_0(\mathbb{R}^d), \quad \int_{\R^d}v \,d\mu_t(v) = \int_{\R^d}v\, d\mu_0(v), \quad \forall t \in [0,T].
		\]
		In the case $\gamma\in[-4,-2]$ we have that the energy is conserved;
		\[
		\int_{\R^d}|v|^2 \, d\mu_t(v) = \int_{\R^d}|v|^2 \, d\mu_0(v), \quad \forall t\in[0,T].
		\]
	\end{lemma}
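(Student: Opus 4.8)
The plan is to test the grazing continuity equation \eqref{eq:GCE} against the functions $\phi = 1$, $\phi = v_i$ for $i=1,\dots,d$, and (in the soft potential range) $\phi = |v|^2$, exploiting the crucial fact already observed in the introduction that $\tn\phi = 0$ for exactly these choices. Indeed, for $\phi$ constant, $\nabla\phi - \nabla_*\phi_* = 0$; for $\phi = v_i$, $\nabla\phi - \nabla_*\phi_* = e_i - e_i = 0$; and for $\phi = |v|^2$, $\nabla\phi - \nabla_*\phi_* = 2v - 2v_* = 2(v-v_*)$, which lies in the kernel of $\Pi[v-v_*]$, so $\tn\phi = |v-v_*|^{1+\gamma/2}\,\Pi[v-v_*]\,2(v-v_*) = 0$. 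Hence, purely formally, $\frac{d}{dt}\int \phi\, d\mu_t = \frac12\iiR \tn\phi\, dM_t = 0$, which gives the conservation statements.

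The only real work is to justify this testing rigorously, since $1$, $v_i$, and $|v|^2$ are not compactly supported and Lemma~\ref{lem:ctsrep} is phrased for $\phi \in C_c^\infty((0,T)\times\Rd)$. First I would fix a smooth cutoff $\chi \in C_c^\infty(\Rd)$ with $\chi \equiv 1$ on $B_1$, $0\le\chi\le1$, supported in $B_2$, and set $\chi_R(v) = \chi(v/R)$. Then $\phi_R := \chi_R$, $\phi_R := v_i\chi_R$, or $\phi_R := |v|^2\chi_R$ are admissible (after also multiplying by a time cutoff to handle the endpoints $t_0,t_1$, or more simply by using the weakly* continuous representative from Lemma~\ref{lem:ctsrep} directly on closed subintervals). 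Applying the formula from Lemma~\ref{lem:ctsrep} with $t_0 = 0$, $t_1 = t$ gives
\[
\iR \phi_R\, d\mu_t - \iR \phi_R\, d\mu_0 = \frac12\int_0^t \iiR \tn\phi_R\, dM_s\, ds,
\]
and it remains to pass $R\to\infty$ on both sides. On the left, since $\mu_0 \in \mathscr{P}_{2}$ (finite second moment, uniformly in time for the curves considered) and $|v|^2\chi_R \uparrow |v|^2$, dominated/monotone convergence yields $\iR\phi_R\,d\mu_t \to \int \phi\,d\mu_t$ for $\phi\in\{1,v_i,|v|^2\}$; the weighted moment $\sup_t E(\mu_t)<\infty$ is exactly what makes the $v_i$ and $|v|^2$ cases legitimate (momentum is only well-defined once one knows first moments are finite, which follows from the second-moment bound).

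The main obstacle, and the place requiring care, is the right-hand side: showing $\iiR \tn\phi_R\, dM_s \to \iiR \tn\phi\, dM_s = 0$. The point is that $\tn\phi_R$ does not vanish — it is supported in the region where $\nabla\chi_R$ or $D^2\chi_R$ is nonzero, i.e. $|v|\sim R$ or $|v_*|\sim R$ — but it is uniformly small there. Concretely, for $\phi_R = |v|^2\chi_R$ one computes $\nabla\phi_R - \nabla_*\phi_{R,*} = 2(v-v_*)\chi_R(v) + |v|^2\nabla\chi_R(v) - (\text{same in }*)$; the first term is killed by $\Pi[v-v_*]$ as before, and the remaining terms are bounded by $C(|v|^2|\nabla\chi_R(v)| + |v_*|^2|\nabla\chi_R(v_*)|) \lesssim CR \mathbf{1}_{\{R\le|v|\le 2R\}} + \cdots$, which after multiplication by the weight $|v-v_*|^{1+\gamma/2}$ (bounded using a mean-value estimate exactly as in the proof of Lemma~\ref{lem:ctsrep}, splitting $\gamma\in[-2,0]$ from $\gamma\in[-4,-2)$) gives an integrand that is $\lesssim R \cdot (1+|v|+|v_*|)\mathbf{1}_{\{|v|\ge R\text{ or }|v_*|\ge R\}}$ or so. This is \emph{not} obviously integrable against $|M_s|$ from \eqref{eq:finitemuM} alone; here is where one must restrict to $\gamma\in[-4,-2]$ for the energy conservation, and where the finiteness assumption \eqref{eq:finitemuM} together with the structure $\Pi[v-v_*](v-v_*)=0$ does the essential cancellation — the surviving error terms carry an extra factor $|\nabla\chi_R|\lesssim 1/R$ (for the $v_i$ case) which beats the linear growth in the weight, so by dominated convergence the right side vanishes as $R\to\infty$. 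For the $|v|^2$ case the weight interacts with the quadratic growth, and one genuinely needs $\gamma + 2 \le 0$ so that $|v-v_*|^{1+\gamma/2}|v|^2 \lesssim 1+|v|+|v_*|$ near the support of $\nabla\chi_R$; this is precisely the dichotomy in the statement. I would organize the argument as: (i) reduce to testing with truncations via Lemma~\ref{lem:ctsrep}; (ii) identify the exact algebraic cancellation $\tn\phi = 0$; (iii) estimate the truncation error $\tn\phi_R$ and invoke \eqref{eq:finitemuM} plus the moment bound on $\mu_t$ to pass to the limit, being careful that the $|v|^2$ test function is only admissible when $\gamma\le -2$.
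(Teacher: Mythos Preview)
Your approach is essentially the same as the paper's: test \eqref{eq:GCE} against $\phi_R = \zeta\chi_R$ for $\zeta\in\{1,v_i,|v|^2\}$, use the algebraic cancellation $\Pi[v-v_*](v-v_*)=0$, and control the truncation error via \eqref{eq:finitemuM}. Two points to tighten.

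First, your decomposition of $\nabla\phi_R - \nabla_*\phi_{R,*}$ for $\phi_R=|v|^2\chi_R$ is not quite right. The correct expression is
\[
2v\chi_R(v) - 2v_*\chi_R(v_*) + |v|^2\nabla\chi_R(v) - |v_*|^2\nabla\chi_R(v_*),
\]
and the first two terms do \emph{not} equal $2(v-v_*)\chi_R(v)$ pointwise; there is a leftover $2v_*(\chi_R(v)-\chi_R(v_*))$ (or the symmetric variant) that you have silently dropped. The paper avoids this by splitting geometrically rather than algebraically: on $B_R\times B_R$ one has $\chi_R\equiv 1$ so $\Pi[v-v_*](v-v_*)=0$ exactly, and on $(B_R\times B_R)^c$ one bounds $w|v\chi_R-v_*\chi_{R*}|$ directly by $1+|v|+|v_*|$ (using $\gamma\le -2$ to make $w\le 1$ for $|v-v_*|\ge 1$, and $w|v-v_*|\le 1$ for $|v-v_*|\le 1$), then invokes dominated convergence with the \emph{fixed} $L^1$ majorant $(1+|v|+|v_*|)\,d|M_s|$ restricted to a set that shrinks to $\emptyset$.

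Second, your final inequality ``$|v-v_*|^{1+\gamma/2}|v|^2 \lesssim 1+|v|+|v_*|$'' is false as written; you have forgotten the factor $|\nabla\chi_R|\lesssim 1/R$. What is actually true (and what the paper uses) is that $|D\{|v|^2\nabla\chi(v/R)/R\}|\lesssim 1$ uniformly in $R$, so a mean-value estimate gives $||v|^2\nabla\chi_R(v)-|v_*|^2\nabla\chi_R(v_*)|\lesssim |v-v_*|$; combined with $w\le 1$ (far field) or $w\cdot|v-v_*|\le 1$ (near field) for $\gamma\in[-4,-2]$, this yields the $R$-independent majorant $1+|v|+|v_*|$ on $(B_R\times B_R)^c$. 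With these two fixes your argument goes through and matches the paper's.
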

	\begin{proof}
		To minimize clutter, we introduce $w = |v-v_*|^{1+\frac{\gamma}{2}}$. We show the proof of the conservation of energy for $\gamma\in[-4,-2]$. We consider a fixed $\varphi\in C^\infty_c(B_2)$ which satisfies
		$$
		0\le \varphi\le 1\qquad\mbox{and}\qquad \varphi(v)=1\qquad\mbox{in $B_1$}.
		$$
		We denote
		$$
		\varphi_R(v)=\varphi(v/R).
		$$
		Using the grazing continuity equation, we have that
		\begin{equation}\label{eq:Renergy}
			\begin{array}{l}
				\displaystyle\int_{\R^d} |v|^2\varphi_R(v)\;d\mu_t(v)-\int_{\R^d} |v|^2\varphi_R(v)\;d\mu_0(v)\\
				\qquad\displaystyle=\int_0^t\iiR w \Pi \left(v\varphi_R(v)+|v|^2\frac{\nabla\varphi(v/R)}{R}-v_*\varphi_R(v_*)-|v_*|^2\frac{\nabla\varphi(v_*/R)}{R}\right)\;dM_s(v,v_*)ds.
			\end{array}
		\end{equation}
		We estimate the contribution of $v\phi_R(v) - v_* \phi_R(v_*)$ from the integral in~\eqref{eq:Renergy} using the cancellation from the projection $\Pi[v-v_*]$ to obtain
		$$
		\begin{array}{rcl}
			\displaystyle \left|\int_0^t\iiR w \Pi \left(v\varphi_R(v)-v_*\varphi_R(v_*)\right)\;dM_s\right|&\le& \displaystyle\int_0^t\iint_{(B_R\times B_R)^c}w \left|v\varphi_R(v)-v_*\varphi_R(v_*)\right|\;d|M_s|\\
			&\lesssim&\displaystyle\int_0^t\iint_{(B_R\times B_R)^c}(1+|v|+|v_*|)\;d|M_s|,
		\end{array}
		$$
		where we have used $\gamma\in[-4,-2]$ to bound
		$$
		w \left|v\varphi_R(v)-v_*\varphi_R(v_*)\right|\lesssim\begin{cases}1&|v-v_*|\le 1\\
			|v|+|v_*|& |v-v_*|\ge 1.
		\end{cases}
		$$
		Similarly, using that $\nabla\phi_R$ is supported in $B_{2R}\setminus B_R$ and that $\left|\partial_{v^i}\left\{|v|^2\frac{\partial_{v^j}\varphi(v/R)}{R}\right\}\right|\lesssim 1$ for every index $i,j\in\{1,\dots, d\}$, we obtain that
		$$
		\begin{array}{rcl}
			\displaystyle \left|\int_0^t\iiR w \Pi \left(|v|^2\frac{\nabla\varphi(v/R)}{R}-|v_*|^2\frac{\nabla\varphi(v_*/R)}{R}\right)\;dM_s\right|&\lesssim&\displaystyle \iint_{(B_R\times B_R)^c}1+|v|+|v_*|\;d|M_s|\,
		\end{array},
		$$
		where we have controlled the difference with a mean-value type estimate.
		From the previous bounds, we can use hypothesis \eqref{eq:finitemuM} to take $R\to\infty$ in \eqref{eq:Renergy} and obtain the conservation of energy
		$$
		\lim_{R\to\infty}\int_{\R^d} |v|^2\varphi_R(v)\;d\mu_t(v)=\lim_{R\to\infty}\int_{\R^d} |v|^2\varphi_R(v)\;d\mu_0(v).
		$$
		The proofs for conservation of mass and momentum involve testing the grazing continuity equation against $\phi_R$ and  $v_i\phi_R$ respectively where $v_i$ is the $i$-th component of $v$. For these statements, the case $\gamma\in[-4,-2]$ follows the same as just presented. For $\gamma\in[-2,0]$, the estimates can be more blunt since the weight is no longer singular.
	\end{proof}
	
	\begin{remark}
		Note that as $\gamma$ increases into the range $(-2,0]$, the weight function $w$ starts adding growth so the mean-value type argument in Lemma~\ref{lem:conserve} no longer helps unless more moments of $M$ are assumed than~\eqref{eq:finitemuM}.
		Due to the conservation of mass, the unique weakly* continuous representative $(\tilde{\mu}_t)$ of Lemma~\ref{lem:ctsrep} has the additional property of being weakly continuous in the context of $\mathscr{P}(\mathbb{R}^d)$.
	\end{remark}
	Based on the previous results, we propose the following definition.
	\begin{definition}[Grazing continuity equation]\label{def:GCE}
		For some terminal time $T > 0$, we define $\mathcal{GCE}_T$ to be the set of pairs of measures $(\mu_t,M_t)_{t\in[0,T]}$ satisfying the following:
		\begin{enumerate}
			\item \label{gce:wkcty} $\mu_t \in \mathscr{P}(\R^d)$ is weakly continuous with respect to $t\in[0,T]$. $(M_t)_{t\in[0,T]}$ is a family of Borel measures belonging to $\mathcal{M}^d$.
			\item We have the moment bound
			$$
			\int_0^T \iiR(1+|v|+|v_*|) d|M_t|(v,v_*)dt < \infty.
			$$
			\item \label{gce:eqn} The grazing continuity equation~\eqref{eq:GCE} is satisfied in the distributional sense. That is, for every $\phi \in C_c^\infty((0,T)\times \mathbb{R}^d)$,
			\[
			\int_0^T \int_{\mathbb{R}^d} \partial_t \phi d\mu_t dt + \frac{1}{2}\int_0^T \iint_{\mathbb{R}^{2d}} \tilde{\nabla}\phi dM_t dt = 0,
			\]
			or equivalently for every $\zeta\in C_c^\infty(\Rd)$,
			\[
			\frac{d}{dt}\iR \zeta(v) d\mu_t(v) = \frac{1}{2}\iiR \tn \zeta(v,v_*) dM_t(v,v_*).
			\]
		\end{enumerate}
		For fixed probability measures $\lambda,\nu$, we may also specify the subset $\mathcal{GCE}(\lambda,\nu)$ as those pairs $(\mu,M)\in\mathcal{GCE}_T$ such that $\mu_0 = \lambda,\; \mu_T = \nu$. For $E>0$, we will speak of curves $(\mu,M)\in\mathcal{GCE}_T^{2,E}$ such that
		\[
		\iR |v|^2 d\mu_t(v) \leq E, \quad \forall t\in[0,T].
		\]
	\end{definition}

	\subsection{Action of a curve}
	\label{sec:actfun}
	In this section, we construct the action of a curve under the grazing continuity equation. We introduce the following function $\alpha : \mathbb{R}^d \times \mathbb{R}_{\geq 0} \to [0,\infty]$ by
	\begin{equation*}
		\alpha (u,s) :=\left\{
		\begin{array}{cc}
			\frac{|u|^2}{2s}, 	&s\neq 0 	\\
			0, 	&s=0, u = 0 	\\
			\infty, 	&s=0, u \neq 0
		\end{array}
		\right..
	\end{equation*}
	\begin{remark}\label{lem:propalpha}
		$\alpha$ is lower semi-continuous (lsc), convex, and positively 1-homogeneous.
	\end{remark}
	For fixed $\mu \in \mathscr{P}(\mathbb{R}^d),M\in\mathcal{M}^d$, we consider the tensorized probability measure $\mu\otimes\mu \in\mathscr{P}(\R^d\times\R^d)$ given by $\mu\otimes \mu(dv, \, dv_*) = \mu(dv)\mu(dv_*)$. Define $\tau \in \mathcal{M}$ given by $\tau = \mu\otimes\mu + |M|$ and the decompositions $\mu\otimes\mu = f^1 \tau$ and $M = N\tau$. We define the action functional as
	\begin{equation}\label{eq:defactfun}
		\mathcal{A}(\mu,M) := \iiR \alpha(N,f^1)d\tau.
	\end{equation}
	This is well-defined by the 1-homogeneity of $\alpha$.
	The following lemma establishes a more concrete expression for the action functional.
	\begin{lemma}\label{lem:actfun}
		Let $\mu \in \mathscr{P}(\mathbb{R}^d)$ be absolutely continuous with respect to $\mathcal{L}$ and $\mu = f\mathcal{L}$. Let $M \in \mathcal{M}^d$ be given such that $\mathcal{A}(\mu,M) < \infty$. Then, $M$ is absolutely continuous with respect to $ff_* dv dv_*$ given by some density $U : \mathbb{R}^d\times \mathbb{R}^d \to \mathbb{R}^d$ such that $M = ff_* U dv dv_* = m dvdv_*$ and
		\begin{equation*}
			\mathcal{A}(\mu,M) = \frac{1}{2}\iint_{\mathbb{R}^{2d}} ff_* |U|^2 dv dv_* = \frac{1}{2}\iint_{\mathbb{R}^{2d}} \frac{|m|^2}{ff_*}dvdv_*.
		\end{equation*}
	\end{lemma}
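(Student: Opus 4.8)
The plan is to show $\mathcal{A}(\mu,M) < \infty$ forces $M \ll ff_*\,dv\,dv_*$, then derive the two formulas. First I would unpack the definition: with $\tau = \mu^1 + |M|$ and $\mu^1 = f^1\tau$, $M = N\tau$, the hypothesis $\mathcal{A}(\mu,M) = \iiR \alpha(N, f^1)\,d\tau < \infty$ means in particular that $\alpha(N, f^1) < \infty$ for $\tau$-a.e.\ $(v,v_*)$. By the definition of $\alpha$, $\alpha(N,f^1) = \infty$ whenever $f^1 = 0$ and $N \neq 0$; hence $\tau$-a.e.\ we have that $f^1 = 0 \implies N = 0$. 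This is exactly the statement that $|M| = |N|\tau \ll f^1 \tau = \mu^1$. Since $\mu = f\mathcal{L}$ is absolutely continuous, $\mu^1 = f(v)f(v_*)\,dv\,dv_*$, so $M \ll ff_*\,dv\,dv_*$, and we may write $M = ff_* U\,dv\,dv_*$ for some Borel vector field $U : \Rtwod \to \Rd$ (defined $ff_*\,dv\,dv_*$-a.e., extended by $0$ elsewhere). Writing $m = ff_* U$ gives $M = m\,dv\,dv_*$ as claimed.

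Next I would compute the action in these coordinates. On the set $\{f^1 > 0\}$ one can take $\tau$ itself to be absolutely continuous with respect to $\mathcal{L}\otimes\mathcal{L}$ — indeed decompose $\mathbb{R}^{2d}$ into $\{f^1 > 0\}$, where both $\mu^1$ and $|M|$ are absolutely continuous, and its complement, which is $\mu^1$-null and (by the above) $|M|$-null, hence $\tau$-null and contributing nothing to the integral. On $\{f^1 > 0\}$, let $g = \frac{d\tau}{dv\,dv_*} = ff_* + |m|$ (using $|M| = |m|\,dv\,dv_*$ there). Then $f^1 = ff_*/g$ and $N = m/g$, so by the $1$-homogeneity of $\alpha$ (Lemma~\ref{lem:propalpha}),
\[
\alpha(N, f^1)\,g = \alpha\!\left(\frac{m}{g}, \frac{ff_*}{g}\right) g = \alpha(m, ff_*) = \frac{|m|^2}{2ff_*},
\]
pointwise on $\{f^1 > 0, ff_* > 0\}$; and the set $\{f^1 > 0, ff_* = 0\}$ is $\mathcal{L}\otimes\mathcal{L}$-null since $f^1 > 0 \iff ff_* > 0$ up to the choice of $\tau$-density, so it is also $\tau$-null. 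Therefore
\[
\mathcal{A}(\mu,M) = \iiR \alpha(N,f^1)\,d\tau = \iint_{\{ff_*>0\}} \frac{|m|^2}{2ff_*}\,dv\,dv_* = \frac{1}{2}\iiR \frac{|m|^2}{ff_*}\,dv\,dv_*,
\]
and substituting $m = ff_* U$ yields $\frac{1}{2}\iiR ff_*|U|^2\,dv\,dv_*$, completing the identification.

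The main technical point to be careful with is the bookkeeping around null sets and the fact that the decomposition $\mu^1 = f^1\tau$, $M = N\tau$ is only defined $\tau$-a.e.: one must check that the conclusion "$M \ll ff_*\,dv\,dv_*$" and the value of $\mathcal{A}$ do not depend on the particular Radon–Nikodym representatives chosen, which follows because $\alpha$ is well-defined on the cone via $1$-homogeneity (this is precisely the remark after \eqref{eq:defactfun}). A second minor subtlety is measurability of $U$: since $M$ and $ff_*\,dv\,dv_*$ are both Borel (Radon) measures and the latter is $\sigma$-finite on $\{ff_* > 0\}$, the Radon–Nikodym derivative exists as a Borel function there. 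None of these steps is deep; the proof is essentially an exercise in applying the Radon–Nikodym theorem and the $1$-homogeneity of $\alpha$, mirroring the corresponding computation for the Benamou–Brenier functional in \cite[Theorem 8.3.1]{ambrosio_gradient_2008-1}, so I would expect the write-up to be short.
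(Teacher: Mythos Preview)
Your proposal is correct and is precisely the standard Radon--Nikodym plus $1$-homogeneity argument that the paper defers to by citing \cite[Lemma 3.6]{erbar_gradient_2016}; you have simply written out those details explicitly, so there is no substantive difference in approach.
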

	\begin{proof}
		The proof is identical to~\cite[Lemma 3.6]{erbar_gradient_2016} up to appropriate modifications. Define $\tau \in \mathcal{M}$ by $\tau = \mu\otimes \mu + |M|$ and label the corresponding densities (which may be infinite) $\mu\otimes \mu = g \tau$ and $M = N\tau$. It suffices to show that $M$ is absolutely continuous with respect to $\mu\otimes \mu$ which is the goal of this proof.
			
		Suppose $S\subset \R^{2d}$ is a measurable set such that $\mu\otimes\mu(S) = 0$. This is equivalent to saying $g = 0$ $\tau$-almost everywhere in $S$. Since $\alpha$ is positive, the assumption $\mathcal{A}(\mu,M)<+\infty$ certainly implies $\alpha(N,g)<+\infty$ $\tau$-almost everywhere in $S$. By definition of $\alpha$, we must also have $N = 0$ $\tau$-almost everywhere in $S$ which is equivalent to saying $M(S)=0$.
	\end{proof}
	\begin{lemma}[Lower semi-continuity of action functional]
		\label{lem:lscact}
		The action functional $\mathcal{A}$ as defined in ~\eqref{eq:defactfun} is lower semi-continuous in both arguments. Specifically, if $\mu_n \rightharpoonup \mu$ weakly in $\mathscr{P}(\mathbb{R}^d)$ and $M_n \overset{*}{\rightharpoonup} M$ weakly* in $\mathcal{M}^d$, we have
		\[
		\mathcal{A}(\mu,M) \leq \liminf_{n\to\infty} \mathcal{A}(\mu_n,M_n).
		\]
	\end{lemma}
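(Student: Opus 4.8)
The plan is to reduce the joint lower semicontinuity of $\mathcal{A}$ to a standard lower semicontinuity result for integral functionals of measures, namely \cite[Theorem 2.34]{ambrosio_gradient_2008-1} (or the Ambrosio--Fusco--Pallara version of Reshetnyak-type semicontinuity), applied to the convex, lsc, positively $1$-homogeneous integrand $\alpha$ from Lemma~\ref{lem:propalpha}. The subtlety is that the action functional is defined through the product measure $\mu^1 = \mu \otimes \mu$, so the first step is to observe that weak convergence $\mu_n \rightharpoonup \mu$ in $\mathscr{P}(\R^d)$ implies $\mu_n^1 \rightharpoonup \mu^1$ weakly in $\mathscr{P}(\R^d \times \R^d)$; this is immediate since products of bounded continuous test functions are dense enough to characterize weak convergence of products of probability measures (and tightness is automatic for probability measures). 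Thus we are in the situation of a sequence of pairs $(\mu_n^1, M_n)$ converging weakly-$*$ in $\mathcal{M}_+ \times \mathcal{M}^d$ to $(\mu^1, M)$.

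Next I would set up the variational (duality) representation of the action. Since $\alpha(u,s) = \sup\{ a s + b\cdot u : (a,b) \in K\}$ where $K = \{(a,b)\in \R\times\R^d : a + \tfrac{1}{2}|b|^2 \le 0\}$ is the convex set dual to the parabola, one gets
\[
\mathcal{A}(\mu,M) = \sup\left\{ \iiR a(v,v_*)\, d\mu^1(v,v_*) + \iiR b(v,v_*)\cdot dM(v,v_*) \right\},
\]
the supremum taken over pairs $(a,b) \in C_b(\R^{2d})\times C_b(\R^{2d};\R^d)$ (or $C_c$, by an approximation argument) with $a(v,v_*) + \tfrac12|b(v,v_*)|^2 \le 0$ pointwise. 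This identity is exactly the one proved in the course of \cite[Lemma 3.6]{erbar_gradient_2016} / \cite[Theorem 2.34]{ambrosio_gradient_2008-1}; it uses the $1$-homogeneity and lower semicontinuity of $\alpha$ together with the fact that $\alpha$ is the Legendre transform of the indicator of $K$. Given this representation, lower semicontinuity is automatic: for each fixed admissible $(a,b)$ with compact support, the linear functional $(\rho, N) \mapsto \int a\, d\rho + \int b\cdot dN$ is continuous for the weak-$*$ convergence, hence $\mathcal{A}$, being a supremum of such continuous functionals, is weakly-$*$ lower semicontinuous; restricting to the subclass where $\mu_n \rightharpoonup \mu$ and using the first step gives the claim.

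Concretely, the steps in order: (1) show $\mu_n \rightharpoonup \mu \implies \mu_n^1 \rightharpoonup \mu^1$; (2) record the Legendre duality $\alpha = \iota_K^*$ and hence the sup-representation of $\mathcal{A}$ against continuous compactly supported test pairs (citing \cite[Theorem 2.34]{ambrosio_gradient_2008-1}); (3) conclude lower semicontinuity as a supremum of weak-$*$-continuous affine functionals. I would state step (2) as a direct invocation of the cited theorem applied to the measure $\mu^1 + |M|$ and the recession/homogeneity structure already used in the definition \eqref{eq:defactfun}, so that essentially no new computation is needed beyond checking that the hypotheses of that theorem (convexity, lower semicontinuity, $1$-homogeneity — all from Lemma~\ref{lem:propalpha}) are met.

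The main obstacle, and the only genuinely non-routine point, is step (1) combined with making the duality representation rigorous in the \emph{product} setting: one must be careful that the reference measures $\tau_n = \mu_n^1 + |M_n|$ are not uniformly dominated by a single measure, so the naive ``lsc of integral functionals with fixed reference measure'' does not apply directly and one really needs the measure-theoretic version with varying base measures; the clean way around this is precisely the sup-of-linear-functionals representation, which sidesteps any disintegration. A secondary technical point is that weak convergence of $\mu_n$ in $\mathscr{P}(\R^d)$ (tested against $C_b$) must be upgraded to weak-$*$ convergence of $\mu_n^1$ tested against $C_c(\R^{2d})$, which is harmless here since all $\mu_n^1$ are probability measures on $\R^{2d}$ and hence tight, so no mass escapes to infinity. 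Once these are in place the proof is short, and I expect the write-up to mirror \cite[Lemma 3.6 and the surrounding discussion]{erbar_gradient_2016} closely, as the paper already signals by citing it for Lemma~\ref{lem:actfun}.
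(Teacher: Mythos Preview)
Your proposal is correct and follows essentially the same route as the paper: the paper's proof is a one-line citation of the general lower semicontinuity result for convex, lsc, positively $1$-homogeneous integrands of measures \cite[Theorem 3.4.3]{buttazzo_semicontinuity_1989}, which is precisely the Reshetnyak-type statement whose proof you have unpacked via the dual sup-of-linear-functionals representation. Your explicit treatment of step~(1), the passage from $\mu_n \rightharpoonup \mu$ to $\mu_n\otimes\mu_n \rightharpoonup \mu\otimes\mu$, is a point the paper's terse citation leaves implicit but which is indeed needed to apply the cited theorem.
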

	\begin{proof}
		This result is an application of the general lsc result in~\cite[Theorem 3.4.3]{buttazzo_semicontinuity_1989} since $\alpha$ satisfies the required convexity, lsc, and homogeneity assumptions by~\Cref{lem:propalpha}.
	\end{proof}
	Another useful property of the action functional is the compactness provided by bounded action. We first state
	\begin{lemma}\label{lem:lemmeasfun}
		Let $F: \mathbb{R}^{2d} \to [0,\infty]$ be measurable and fix any $\mu\in \mathscr{P}(\mathbb{R}^d), \, M \in \mathcal{M}^d$. We have the following bound:
		\begin{equation}
			\label{eq:lemmeasfun}
			\iiR F(v,v_*) d|M|(v,v_*) \leq \sqrt{2}\mathcal{A}(\mu,M)^\frac{1}{2} \left(
			\iiR F(v,v_*)^2 d\mu(v)d\mu(v_*)
			\right)^\frac{1}{2}
		\end{equation}
	\end{lemma}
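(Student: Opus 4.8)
The plan is to reduce \eqref{eq:lemmeasfun} to the Cauchy--Schwarz inequality in an appropriately weighted space, using the representation of $\mathcal{A}$ furnished by Lemma~\ref{lem:actfun} when $\mu$ is absolutely continuous, and a density/approximation argument otherwise. First I would dispose of the trivial case $\mathcal{A}(\mu,M) = \infty$, where there is nothing to prove, so we may assume $\mathcal{A}(\mu,M) < \infty$. Next, observe that $\mathcal{A}(\mu,M) < \infty$ forces $\mu^1 = \mu\otimes\mu$ and $|M|$ to be mutually absolutely continuous on the set where $M \neq 0$: indeed, with $\tau = \mu^1 + |M|$ and $M = N\tau$, $\mu^1 = f^1\tau$, finiteness of $\iint \alpha(N, f^1)\,d\tau$ excludes the set $\{f^1 = 0,\ N \neq 0\}$ from carrying mass, so $|M| \ll \mu^1$. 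Write $M = m\,dvdv_*$ abstractly as $m = h\,d\mu^1$ with $h = N/f^1$ on $\{f^1 > 0\}$ (this is exactly the vector density of $M$ with respect to $\mu\otimes\mu$; in the absolutely continuous case of Lemma~\ref{lem:actfun} this is $h = U$), and note $\alpha(N, f^1)\,d\tau = \tfrac12 |h|^2 f^1\,d\tau = \tfrac12 |h|^2\,d\mu^1$, so that
\[
\mathcal{A}(\mu,M) = \frac{1}{2}\iiR |h(v,v_*)|^2 \, d\mu(v)\,d\mu(v_*).
\]

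With this in hand the estimate is immediate: for measurable $F \geq 0$,
\[
\iiR F \, d|M| = \iiR F\,|h|\,d\mu^1 \leq \left(\iiR F^2\,d\mu^1\right)^{1/2}\left(\iiR |h|^2\,d\mu^1\right)^{1/2} = \sqrt{2}\,\mathcal{A}(\mu,M)^{1/2}\left(\iiR F^2\,d\mu(v)d\mu(v_*)\right)^{1/2},
\]
which is exactly \eqref{eq:lemmeasfun}. The only technical point is justifying $d|M| = |h|\,d\mu^1$ and the identity for $\mathcal{A}$ at the level of general $\mu \in \mathscr{P}(\R^d)$ (not necessarily absolutely continuous with respect to $\mathcal{L}$), since Lemma~\ref{lem:actfun} is stated under absolute continuity. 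I would handle this directly from the definition \eqref{eq:defactfun}: the decomposition via the common dominating measure $\tau = \mu^1 + |M|$ already works in full generality, and the polar decomposition $M = \frac{N}{|N|}|M|$ combined with $N = h f^1$ (valid $\tau$-a.e.\ on $\{f^1 > 0\}$, and $|M|$ being concentrated there) gives $d|M| = |N|\,d\tau = |h| f^1\,d\tau = |h|\,d\mu^1$. Alternatively, one can invoke the disintegration/duality characterization of $\mathcal{A}$ as a supremum of linear functionals, but the direct Radon--Nikodym computation is cleaner.

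I do not anticipate a serious obstacle here; the statement is essentially a packaging of Cauchy--Schwarz once the density $h$ of $M$ with respect to $\mu\otimes\mu$ is correctly identified. The one place to be careful is the bookkeeping on the null set $\{f^1 = 0\}$: one must check that $\mathcal{A}(\mu,M) < \infty$ genuinely rules out mass of $|M|$ there, so that $h$ is defined $|M|$-a.e.\ and the left side $\iint F\,d|M|$ is entirely captured by the integral against $\mu^1$. This follows because $\alpha(u,0) = \infty$ for $u \neq 0$, so any $\tau$-positive-measure portion of $\{f^1 = 0, N \neq 0\}$ would make $\mathcal{A}$ infinite, contradicting our assumption. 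The rest is routine.
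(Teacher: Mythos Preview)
Your proposal is correct and follows essentially the same approach as the paper: both reduce to Cauchy--Schwarz after introducing the dominating measure $\tau = \mu\otimes\mu + |M|$. The only difference is cosmetic: the paper applies Cauchy--Schwarz directly in $L^2(d\tau)$, writing $F|N| = \bigl(F\sqrt{2f^1}\bigr)\cdot\bigl(|N|/\sqrt{2f^1}\bigr)$ and recognizing the second factor's square-integral as $\mathcal{A}(\mu,M)$ (the degenerate set $\{f^1=0,N\neq0\}$ being handled implicitly by $\alpha(u,0)=\infty$), whereas you first isolate the case $\mathcal{A}<\infty$, establish $|M|\ll\mu\otimes\mu$, and then apply Cauchy--Schwarz in $L^2(d\mu\otimes\mu)$ against the density $h$. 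Your extra bookkeeping on the null set is a legitimate way to make the division rigorous, but it is not strictly necessary.
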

	\begin{proof}
		This proof follows~\cite[Lemma 3.8]{erbar_gradient_2016}. We assume $\mathcal{A}(\mu,M)<+\infty$ or else~\eqref{eq:lemmeasfun} holds automatically. This implies that whenever $A\subset \R^{2d}$ is a measurable set, $\mu\otimes\mu(A)=0$ if and only if $|M|(A) = 0$. Therefore, in the following computations we are implicitly integrating away from sets of zero $\mu\otimes \mu$-measure. We provide the simple argument by Cauchy-Schwarz for completeness. By considering $\tau = \mu\otimes\mu + |M|$, we estimate
		\begin{align*}
			\iiR Fd|M|(v,v_*) &\leq \iiR F\left|
			\frac{dM}{d\tau}
			\right|d\tau(v,v_*)= \iiR F\left(
			\left|
			\frac{dM}{d\tau}
			\right|\middle/\sqrt{2\frac{d\mu\otimes\mu}{d\tau}}
			\right)\sqrt{2\frac{d\mu\otimes\mu}{d\tau}} d\tau \\
			&\leq \left(
			\iiR \alpha\left(
			\frac{dM}{d\tau},\frac{d\mu\otimes\mu}{d\tau}
			\right)d\tau
			\right)^\frac{1}{2}\left(\iiR
			2F^2d\mu\otimes\mu
			\right)^\frac{1}{2}     \\ 
			&= \sqrt{2}\mathcal{A}(\mu,M)^\frac{1}{2} \left(
			\iiR F(v,v_*)^2 d\mu(v)d\mu(v_*)
			\right)^\frac{1}{2}.
		\end{align*}
	\end{proof}
	
	\begin{remark}
		\label{rk:moretest}
		Suppose we have $\mu_t\in\mathscr{P}(\R^d)$ such that
		$$
		\int_0^Tm_2(\mu_t)dt = \int_0^T\int_{\R^d}\japangle{v}^2\;d\mu_t(v)dt<\infty,
		$$
		then for $M\in\mathcal{M}_T^d$ the previous estimate~\eqref{eq:lemmeasfun} yields
		\begin{equation}
			\label{eq:1momM2mommu}
			\int_0^T\iiR (1+|v| + |v_*|) d|M_t|(v,v_*)dt \lesssim  \int_0^T\mathcal{A}(\mu_t,M_t)^\frac{1}{2} \left(
			1+2\int_{\R^d}|v|^2\;d\mu_t
			\right)^\frac{1}{2}dt.
		\end{equation}
		Therefore, if the integral in time of the second moment of $\mu$ is bounded, then $M$ satisfies the moments conditions \eqref{eq:finitemuM} and the energy is conserved \eqref{lem:conserve}. In the sequel, we will be considering curves that have bounded second moment which guarantee~\eqref{eq:1momM2mommu}.
	\end{remark}
	\begin{proposition}\label{prop:cpctbddact}
		Let $(\mu_t^n,M_t^n)_n$ be a sequence in $\mathcal{GCE}_{T}$ such that $(\mu_0^n)_n$ is tight and we have the following uniform bounds
		\begin{equation}\label{eq:unitintboundact}
			\sup_{n\in\mathbb{N}}\int_0^T \iR |v|^2\;d\mu_t^ndt < \infty\qquad\mbox{and}\qquad\sup_{n\in\mathbb{N}}\int_0^T \mathcal{A}(\mu_t^n,M_t^n)\;dt < \infty.
		\end{equation}
		Then, there exists $(\mu_t,M_t)\in\mathcal{GCE}_{T}$ such that, possibly after extracting a subsequence, we have the following convergences
		\[
		\begin{array}{cl}
			\mu_t^n \rightharpoonup \mu_t 	&\text{weakly in }\mathscr{P}(\mathbb{R}^d), \quad \forall t \in [0,T] 	\\
			M_t^ndt \overset{*}{\rightharpoonup} M_tdt 	&\text{weakly* in }\mathcal{M}_T^d
		\end{array}.
		\]
		Furthermore, along this subsequence we have the following lower semi-continuity
		\[
		\int_0^T \mathcal{A}(\mu_t,M_t)\;dt \leq \liminf_{n\to\infty}\int_0^T \mathcal{A}(\mu_t^n,M_t^n)\;dt.
		\]
	\end{proposition}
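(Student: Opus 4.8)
The plan is to follow the standard compactness scheme for continuity equations à la Ambrosio–Gigli–Savaré and Erbar, adapted to the non-local grazing setting. First I would establish the time-compactness of the curves $(\mu^n_t)$: using the bound on $\sup_n \int_0^T \mathcal{A}(\mu^n_t, M^n_t)\,dt$ together with the uniform bound on the time-integrated second moment, Remark~\ref{rk:moretest} (specifically \eqref{eq:1momM2mommu}) gives a uniform bound $\sup_n \int_0^T \iiR (1+|v|+|v_*|)\,d|M^n_t|(v,v_*)\,dt < \infty$. From the weak formulation \eqref{eq:GCEtdiff}, the same mean-value estimates used in the proof of Lemma~\ref{lem:ctsrep} (the $C^1$ control for $\gamma \in [-2,0]$ and the $C^2$ control for $\gamma \in [-4,-2)$) bound the distributional time derivative of $t \mapsto \mu^n_t(\zeta)$ uniformly in $n$ for each fixed $\zeta \in C_c^\infty(\Rd)$. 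Combining this equicontinuity in time with tightness in $v$ — which comes from tightness of $(\mu^n_0)_n$ propagated forward via the moment control, or more directly from the uniform bound on $\int_0^T m_2(\mu^n_t)\,dt$ plus the time-derivative estimate — a refined Arzelà–Ascoli / Ascoli-type argument (as in \cite[Proposition 3.3.1]{ambrosio_gradient_2008-1}) yields a subsequence and a weakly continuous limit curve $(\mu_t)$ with $\mu^n_t \rightharpoonup \mu_t$ for every $t \in [0,T]$.

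Next I would extract a weak* limit of the grazing rates. Since $\sup_n \int_0^T \iiR (1+|v|+|v_*|)\,d|M^n_t|\,dt < \infty$, the measures $M^n_t\,dt$ are bounded in total variation on $\Rtwod \times [0,T]$ after localization, and in fact the weight $(1+|v|+|v_*|)$ gives the tightness needed to pass to a weak* limit $M_t\,dt$ in $\mathcal{M}_T^d$ (up to a further subsequence), with no mass escaping to infinity. Passing to the limit in the distributional grazing continuity equation \eqref{eq:GCEtdiff} is then routine: for fixed $\zeta \in C_c^\infty(\Rd)$, $\tn\zeta(v,v_*)$ grows at most linearly in $(|v|,|v_*|)$ for $\gamma \in [-2,0]$ (respectively is bounded after the mean-value estimate for $\gamma \in [-4,-2)$), so it can be tested against $M^n_t\,dt \overset{*}{\rightharpoonup} M_t\,dt$, while the left-hand side passes to the limit by the pointwise-in-$t$ weak convergence of $\mu^n_t$ and dominated convergence. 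This shows $(\mu,M)$ satisfies items \ref{gce:wkcty}–\ref{gce:eqn} of Definition~\ref{def:GCE}, and the moment bound on $M$ follows from lower semicontinuity of total variation under weak* convergence; hence $(\mu,M) \in \mathcal{GCE}_T$.

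Finally, for the lower semicontinuity of the time-integrated action, I would apply Lemma~\ref{lem:lscact} (lsc of $\mathcal{A}$ in both arguments jointly) in a disintegrated form in time: viewing $\mu^n_t\,dt$ and $M^n_t\,dt$ as measures on space-time and using the joint convexity, lsc, and $1$-homogeneity of $\alpha$, the functional $(\mu, M) \mapsto \int_0^T \mathcal{A}(\mu_t, M_t)\,dt$ is lsc with respect to the convergences obtained — this is again an instance of \cite[Theorem 3.4.3]{buttazzo_semicontinuity_1989} applied on $\Rtwod \times [0,T]$ with the reference measure built from $\mu^n_t\,dt \otimes \mu^n_t(dv_*)$-type couplings. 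One must be slightly careful that $\mu^n_t \rightharpoonup \mu_t$ for \emph{each} $t$ upgrades to $\mu^n_t\,dt \rightharpoonup \mu_t\,dt$ and, more delicately, that the product structure $\mu^1_t = \mu_t \otimes \mu_t$ passes to the limit; this follows since weak convergence of $\mu^n_t$ implies weak convergence of the products $\mu^n_t \otimes \mu^n_t$ for each $t$, and then dominated convergence in $t$.

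The main obstacle I expect is the interaction between the non-locality and the time-compactness: unlike the classical Wasserstein setting where the velocity field lives on $\Rd$, here the grazing rate $M_t$ lives on $\Rtwod$ and the divergence $\tn\cdot$ couples the two variables, so one must ensure that the mean-value estimates of Lemma~\ref{lem:ctsrep} — which for the very soft range $\gamma \in [-4,-2)$ require $C^2$ control on test functions and use the projection cancellation — are uniform in $n$ and compatible with the tightness argument. Concretely, the delicate point is controlling $\int_0^T \iiR (1+|v|+|v_*|)\,d|M^n_t|\,dt$ purely from the action bound and the second-moment bound via \eqref{eq:1momM2mommu}, and then verifying that this linear-growth control is exactly enough to both pass to the limit in the equation and retain no-loss-of-mass for $M$; everything else is a routine adaptation of \cite{erbar_gradient_2016} and \cite[Chapter 8]{ambrosio_gradient_2008-1}.
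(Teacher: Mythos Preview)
Your proposal is correct and follows essentially the same route as the paper's proof, which is only a sketch citing \cite[Lemma 4.5]{dolbeault_new_2009} and \cite[Proposition 3.11]{erbar_gradient_2016}: second-moment bound for tightness of $\mu^n$, the estimate \eqref{eq:1momM2mommu} for compactness of $M^n_t\,dt$, and lower semicontinuity via Lemma~\ref{lem:lscact} (the paper phrases this last step as ``Fatou's lemma and Lemma~\ref{lem:lscact}'', but your space-time application of Buttazzo's result is the cleaner way to handle the fact that $M^n_t$ only converges after integrating in $t$). Your identification of the key control \eqref{eq:1momM2mommu} and the $C^1$/$C^2$ mean-value estimates from Lemma~\ref{lem:ctsrep} matches exactly what the paper relies on.
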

	\begin{proof}[Sketch proof]
		This result follows from a similar proof to~\cite[Lemma 4.5]{dolbeault_new_2009} and~\cite[Proposition 3.11]{erbar_gradient_2016} which we sketch. The second moment bound for $\mu^n$ in~\eqref{eq:unitintboundact} produces a limit $\mu$. Recalling the application of~\Cref{lem:lemmeasfun} in~\Cref{rk:moretest}, the bounded action in~\eqref{eq:unitintboundact} and the estimate~\eqref{eq:1momM2mommu} produce a limit $M_tdt$ for a subsequence of $M^n_tdt$. The lower semi-continuity follows from Fatou's lemma and Lemma~\ref{lem:lscact}.
	\end{proof}
	\subsection{Properties of the Landau metric}
	\label{sec:grazdist}
	We define the distance, $d_L$ induced by the action functional on $\mathscr{P}_{2,E}(\Rd)$. Throughout, we will be working in the grazing continuity equation space defined earlier by $\mathcal{GCE}_T^{2,E}$ for $T>0$ some terminal time and $E>0$ any second moment bound.
	
	\begin{definition}
		\label{def:d_L}
		For $\lambda, \nu \in \mathscr{P}_{2,E}(\R^d)$ we define the (square of the) Landau distance by
		\begin{equation}
			\label{eq:d_L}
			d_L^2(\lambda, \nu) := \inf \left\{
			T\int_0^T \mathcal{A}(\mu_t,M_t)dt \, \bigg| \, (\mu,M)\in \mathcal{GCE}_T^{2,E}(\lambda, \nu)
			\right\}.
		\end{equation}
	\end{definition}
Notice this definition is independent of $T>0$ considering the scaling of the grazing collision equation and the 1-homogeneity of $\mathcal{A}$.
	We have an equivalent characterization of $d_L$ which can be seen in other PDE contexts such as~\cite{erbar_gradient_2016,dolbeault_new_2009}.
	\begin{lemma}
		\label{lem:d_Lchar}
		Given $\lambda, \nu \in \mathscr{P}_{2,E}(\R^d)$, we have
		\begin{equation}
			\label{eq:d_Lchar}
			d_L(\lambda, \nu) = \inf\left\{
			\int_0^T \sqrt{\mathcal{A}(\mu_t,M_t)}dt \,\bigg| \, (\mu,M)\in\mathcal{GCE}_T^{2,E}(\lambda, \nu)
			\right\}.
		\end{equation}
	\end{lemma}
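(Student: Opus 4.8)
The plan is to show the two infima in \eqref{eq:d_L} and \eqref{eq:d_Lchar} coincide by a standard reparametrization-in-time argument, exploiting the 1-homogeneity of $\alpha$ (Lemma~\ref{lem:propalpha}) which makes the action scale correctly under time changes. Write $d_L$ for the quantity defined in Definition~\ref{def:d_L} and let $\tilde d_L(\lambda,\nu)$ denote the right-hand side of \eqref{eq:d_Lchar}. I would first prove $d_L \geq \tilde d_L$: for any $(\mu,M)\in\mathcal{GCE}_T^{2,E}(\lambda,\nu)$, Cauchy--Schwarz in time gives
\[
\int_0^T \sqrt{\mathcal{A}(\mu_t,M_t)}\,dt \leq \sqrt{T}\left(\int_0^T \mathcal{A}(\mu_t,M_t)\,dt\right)^{1/2},
\]
so the right-hand side of \eqref{eq:d_Lchar} is bounded above by $d_L(\lambda,\nu)$, and taking the infimum over admissible curves yields $\tilde d_L \leq d_L$. (One should first note this forces $T\int_0^T\mathcal{A}\,dt \geq (\int_0^T\sqrt{\mathcal{A}}\,dt)^2$ pointwise before passing to infima.)

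For the reverse inequality $d_L \leq \tilde d_L$, the idea is constant-speed reparametrization. Given $(\mu,M)\in\mathcal{GCE}_T^{2,E}(\lambda,\nu)$ with $\int_0^T\sqrt{\mathcal{A}(\mu_t,M_t)}\,dt =: L < \infty$, I would first reduce to the case where $t\mapsto \sqrt{\mathcal{A}(\mu_t,M_t)}$ is (essentially) bounded away from $0$ and $\infty$ by a perturbation/truncation argument, or more cleanly define $\mathsf{s}(t) := \frac{1}{L}\int_0^t (\sqrt{\mathcal{A}(\mu_r,M_r)} + \delta)\,dr$ for small $\delta>0$, which is a strictly increasing absolutely continuous bijection of $[0,T]$ onto $[0,T_\delta]$ for suitable rescaling; then set $\hat\mu_{\mathsf{s}} := \mu_{t(\mathsf{s})}$ and $\hat M_{\mathsf{s}} := t'(\mathsf{s})\, M_{t(\mathsf{s})}$. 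The chain rule shows $(\hat\mu,\hat M)$ again solves the grazing continuity equation (the factor $t'(\mathsf s)$ on $M$ compensates the time rescaling in $\partial_t$), lies in $\mathcal{GCE}^{2,E}$ since the second-moment bound only involves $\mu$, and by 1-homogeneity of $\alpha$ one gets $\mathcal{A}(\hat\mu_{\mathsf s},\hat M_{\mathsf s}) = t'(\mathsf s)^2\,\mathcal{A}(\mu_{t(\mathsf s)}, M_{t(\mathsf s)})$, which by the choice of $\mathsf s$ is constant in $\mathsf s$ (up to the $\delta$-correction). A direct computation then gives $T_\delta \int_0^{T_\delta}\mathcal{A}(\hat\mu,\hat M)\,d\mathsf s \to L^2$ as $\delta\to 0$, after a final rescaling of the time interval back to length $T_\delta$ and noting the value $T\int_0^T\mathcal{A}\,dt$ is itself invariant under affine rescaling of the time interval. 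Taking infima gives $d_L \leq \tilde d_L$.

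I also need to record that $d_L$ is independent of the terminal time $T$ appearing in Definition~\ref{def:d_L}: since affine rescaling $t \mapsto (T'/T)t$ maps $\mathcal{GCE}_T^{2,E}$ to $\mathcal{GCE}_{T'}^{2,E}$ and leaves $T\int_0^T\mathcal{A}\,dt$ unchanged (the $M$-rescaling factor $(T/T')$ enters $\mathcal{A}$ quadratically and is cancelled by the prefactor change), the infimum does not depend on $T$; this is what makes the reparametrization in the previous paragraph legitimate and is worth stating explicitly. The main obstacle is the technical care needed in the reparametrization when $\sqrt{\mathcal{A}(\mu_t,M_t)}$ vanishes or blows up on a set of positive measure — handling this requires the $\delta$-regularization above together with a diagonal argument letting $\delta \to 0$, and checking that the rescaled pairs genuinely remain in $\mathcal{GCE}^{2,E}$ (in particular the moment condition item~(2) of Definition~\ref{def:GCE}, which follows from Remark~\ref{rk:moretest} since the bounded-action and bounded-second-moment conditions are preserved). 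The rest is routine bookkeeping.
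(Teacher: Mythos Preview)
Your proposal is correct and follows precisely the reparametrization technique the paper invokes (it cites \cite[Theorem 5.4]{dolbeault_new_2009} without further detail), so the approaches coincide. One small remark: the scaling $\mathcal{A}(\hat\mu_{\mathsf s},\hat M_{\mathsf s}) = t'(\mathsf s)^2\,\mathcal{A}(\mu_{t(\mathsf s)}, M_{t(\mathsf s)})$ is really the $2$-homogeneity of $\alpha$ in its first argument (equivalently the explicit formula of Lemma~\ref{lem:actfun}), not the joint $1$-homogeneity of Lemma~\ref{lem:propalpha}; the latter only guarantees that $\mathcal{A}$ is well-defined independently of the reference measure $\tau$.
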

	\begin{proof}
		This proof uses the same reparameterisation technique in~\cite[Theorem 5.4]{dolbeault_new_2009}.
	\end{proof}
	\begin{proposition}[Minimizing curve]
		\label{prop:mincurve}
		Suppose that $\mu_0,\mu_1\in\mathscr{P}_{2,E}(\R^d)$ are probability measures such that $d_L(\mu_0,\mu_1)<\infty$. Then there exists a curve $(\mu,M)\in\mathcal{GCE}_1^{2,E}(\mu_0,\mu_1)$ attaining the infimum of~\eqref{eq:d_L} (equivalently, also~\eqref{eq:d_Lchar}) and $\mathcal{A}(\mu_t,M_t) = d_L^2(\mu_0,\mu_1)$ for almost every $t\in[0,1]$.
	\end{proposition}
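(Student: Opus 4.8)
The plan is to use the direct method in the calculus of variations. Fix $\mu_0, \mu_1 \in \mathscr{P}_{2,E}(\R^d)$ with $d_L(\mu_0,\mu_1) < \infty$, and take a minimizing sequence $(\mu^n, M^n) \in \mathcal{GCE}_1^{2,E}(\mu_0,\mu_1)$ for the functional $\int_0^1 \mathcal{A}(\mu^n_t, M^n_t)\,dt$, so that $\int_0^1 \mathcal{A}(\mu^n_t, M^n_t)\,dt \to d_L^2(\mu_0,\mu_1)$. In particular this quantity is uniformly bounded in $n$, and since each curve lies in $\mathcal{GCE}_1^{2,E}$ we have the uniform second-moment bound $\sup_n \int_0^1 \int_{\R^d} |v|^2 \, d\mu^n_t \, dt \le \sup_n \int_0^1 m_2(\mu^n_t)\,dt \le E$. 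The endpoint condition $\mu^n_0 = \mu_0$ makes $(\mu^n_0)_n$ trivially tight, so the hypotheses \eqref{eq:unitintboundact} of Proposition~\ref{prop:cpctbddact} are met. Applying it yields, after passing to a subsequence, a limit pair $(\mu,M) \in \mathcal{GCE}_1$ with $\mu^n_t \rightharpoonup \mu_t$ weakly for every $t \in [0,1]$ and $M^n_t\,dt \overset{*}{\rightharpoonup} M_t\,dt$ weakly* in $\mathcal{M}_1^d$, together with the lower semi-continuity $\int_0^1 \mathcal{A}(\mu_t,M_t)\,dt \le \liminf_{n\to\infty} \int_0^1 \mathcal{A}(\mu^n_t,M^n_t)\,dt = d_L^2(\mu_0,\mu_1)$.

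It remains to check that the limit is admissible, i.e. $(\mu,M) \in \mathcal{GCE}_1^{2,E}(\mu_0,\mu_1)$. The second-moment constraint $m_2(\mu_t) \le E$ for all $t$ passes to the limit because, for each fixed $t$, the map $\nu \mapsto m_2(\nu) = \int \langle v\rangle^2 d\nu$ is weakly lower semi-continuous (approximate $\langle v\rangle^2$ from below by bounded continuous functions and use monotone convergence), so $m_2(\mu_t) \le \liminf_n m_2(\mu^n_t) \le E$. The endpoint conditions $\mu_0 = \mu_0$, $\mu_1 = \mu_1$ follow from the pointwise-in-$t$ weak convergence at $t=0$ and $t=1$ and the weak continuity of $t \mapsto \mu^n_t$ and of the limiting curve (Lemma~\ref{lem:ctsrep} and the conservation lemma ensure $\mu$ has a weakly continuous representative; note the finite-moment condition \eqref{eq:finitemuM} holds for $M$ by Remark~\ref{rk:moretest} since $\int_0^1 m_2(\mu_t)\,dt < \infty$). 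Hence $(\mu,M)$ is a competitor, so $d_L^2(\mu_0,\mu_1) \le \int_0^1 \mathcal{A}(\mu_t,M_t)\,dt$, and combined with the reverse inequality from lower semi-continuity we conclude that $(\mu,M)$ attains the infimum with $\int_0^1 \mathcal{A}(\mu_t,M_t)\,dt = d_L^2(\mu_0,\mu_1)$.

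For the final claim that $\mathcal{A}(\mu_t,M_t) = d_L^2(\mu_0,\mu_1)$ for a.e.\ $t \in [0,1]$, I would argue by the standard reparameterisation/constant-speed argument, exactly as invoked in the proof of Lemma~\ref{lem:d_Lchar}. By Jensen's inequality (or Cauchy--Schwarz in time on $[0,1]$),
\[
d_L(\mu_0,\mu_1) \le \int_0^1 \sqrt{\mathcal{A}(\mu_t,M_t)}\,dt \le \left( \int_0^1 \mathcal{A}(\mu_t,M_t)\,dt \right)^{1/2} = d_L(\mu_0,\mu_1),
\]
using \eqref{eq:d_Lchar} for the first inequality and the minimality just established for the last equality. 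Thus equality holds throughout, which forces $t \mapsto \sqrt{\mathcal{A}(\mu_t,M_t)}$ to be constant a.e.; evaluating the constant from $\int_0^1 \mathcal{A}(\mu_t,M_t)\,dt = d_L^2(\mu_0,\mu_1)$ gives $\mathcal{A}(\mu_t,M_t) = d_L^2(\mu_0,\mu_1)$ for a.e.\ $t$.

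The main obstacle I anticipate is not any single hard estimate but rather the careful bookkeeping needed to verify admissibility of the limit: one must ensure the limiting $M$ still satisfies the finite-moment condition \eqref{eq:finitemuM} so that Lemma~\ref{lem:ctsrep} applies, and that the pointwise-in-$t$ weak convergence is genuinely strong enough to preserve the endpoint data — this is where Proposition~\ref{prop:cpctbddact} delivering convergence \emph{for every} $t$ (not merely a.e.) is essential. One should also confirm the minimizing sequence can be taken with common terminal time $T=1$; this is harmless because any $(\mu,M) \in \mathcal{GCE}_T^{2,E}$ can be rescaled in time to $T=1$ while leaving $\int \mathcal{A}\,dt$ and the endpoints unchanged, by the same reparameterisation used for Lemma~\ref{lem:d_Lchar}.
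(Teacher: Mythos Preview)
Your proposal is correct and follows exactly the approach the paper indicates: the direct method of the calculus of variations with compactness and lower semicontinuity supplied by Proposition~\ref{prop:cpctbddact}, plus the reparameterisation/Jensen equality-case argument for the constant-action statement. The paper's proof is a one-line reference to this method, and your write-up is a faithful and careful expansion of it, including the admissibility checks (endpoint preservation, second-moment bound, and~\eqref{eq:finitemuM} via Remark~\ref{rk:moretest}) that the paper leaves implicit.
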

	\begin{proof}
		This result follows from the direct method of calculus of variations where the lower semicontinuity comes from Proposition~\ref{prop:cpctbddact}.
	\end{proof}
	\begin{proof}[Proof of Theorem~\ref{them:existdL}]
		We prove the statements in exactly the order they are presented in the theorem, starting with the properties of the proposed Landau distance as a metric. The positivity of $d_L$ follows from the positivity of $\alpha$. We now check that $d_L$ satisfies the properties of a metric.
		
		\textitunder{$d_L$ distinguishes points}
		\\
		Fix $\mu_0,\mu_1\in \mathscr{P}_{2,E}(\Rd)$, we check that $d_L(\mu_0,\mu_1) = 0 \iff \mu_0 = \mu_1$. Suppose that $d_L(\mu_0,\mu_1) = 0$. By Proposition~\ref{prop:mincurve} we can find $(\mu,M)\in \mathcal{GCE}_1^{2,E}(\mu_0,\mu_1)$ which is a minimizing curve and moreover $0 = d_L(\mu_0,\mu_1) = \mathcal{A}(\mu_t,M_t)$ implies $M=0$. The grazing continuity equation reduces to $\partial_t\mu_t=0$ which implies $\mu_t$ is constant in time.
		
		The converse statement follows similarly by pairing the constant curve $\mu : t\mapsto \mu_0=\mu_1$ with the zero measure so that $(\mu,0) \in \mathcal{GCE}_1^{2,E}(\mu_0,\mu_1)$.
		
		\textitunder{Symmetry}
		\\
		Symmetry follows because time can be reversed for every curve. For instance, if $(\mu,M) \in \mathcal{GCE}_T^{2,E}({\mu_0},{\mu_1})$, then one can check that the pair
		\[
		\mu^r :t\mapsto \mu(T-t), \quad M^r :t \mapsto -M(T-t)
		\]
		belong to $\mathcal{GCE}_T^{2,E}(\mu_1,\mu_0)$ with the same action.
		
		\textitunder{Triangle inequality}
		\\
		We sketch the argument using a glueing lemma as in~\cite[Lemma 4.4]{dolbeault_new_2009}. Let $\mu^0,\mu^1,\mu^2\in\mathscr{P}_{2,E}(\Rd)$ be such that $d_L(\mu^0,\mu^1)<\infty$ and  $d_L(\mu^1,\mu^2)<\infty$.  If not, $d_L(\mu^0,\mu^2)\leq d_L(\mu^0,\mu^1) + d_L(\mu^1,\mu^2)$ holds trivially. By Proposition~\ref{prop:mincurve}, we can find minimizing curves connecting these probability measures
		\[
		\left\{
		\begin{array}{cl}
			(\mu^{0\to1},M^{0\to1}) &\in\mathcal{GCE}_1^{2,E}({\mu^0},{\mu^1}) 	\\
			(\mu^{1\to2},M^{1\to2}) &\in\mathcal{GCE}_1^{2,E}({\mu^1},{\mu^2}) 
		\end{array}
		\right\}.
		\]
		Their concatenation from time 0 to 1 is given by
		\[
		\mu_t:= \left\{
		\begin{array}{cc}
			\mu_{2t}^{0\to1}, &0 \leq t \leq 1/2 	\\
			\mu_{2(t-1/2)}^{1\to2}, 	&1/2\leq t \leq 1
		\end{array}
		\right., \quad M_t:= \left\{
		\begin{array}{cc}
			2M_{2t}^{0\to1}, &0 \leq t \leq 1/2 	\\
			2M_{2{t-1/2}}^{1\to2}, 	&1/2< t \leq 1
		\end{array}
		\right..
		\]
		One can check that $(\mu,M)\in\mathcal{GCE}_1^{2,E}(\mu^0,\mu^2)$, so it is an admissible competitor in the computation of $d_L(\mu^0,\mu^2)$. By looking at the action on the different time pieces, we obtain
		\begin{align*}
			d_L(\mu^0,\mu^2) \leq \int_0^1 \sqrt{\mathcal{A}(\mu_t,M_t)}dt = d_L(\mu^0,\mu^1)+d_L(\mu^1,\mu^2).
		\end{align*}
		
		\textitunder{$d_L$-convergence/boundedness implies weak convergence/compactness}
		\\
		Fix $\mu^n,\mu^\infty\in\mathscr{P}_{2,E}$ for $n\in\mathbb{N}$ be such that $d_L(\mu^\infty,\mu^n) \to 0$ as $n\to \infty$. By Proposition~\ref{prop:mincurve}, take minimizing curves $(\nu^n,M^n)\in\mathcal{GCE}_1^{2,E}(\mu^\infty,\mu^n)$ such that
		\[
		d_L(\mu^\infty,\mu^n) = \sqrt{\mathcal{A}(\nu_t^n,M_t^n)}, \quad \text{a.e. }t\in[0,1].
		\]
		By compactness in Proposition~\ref{prop:cpctbddact}, there are limits $(\nu,M)\in\mathcal{GCE}_1^{2,E}$ such that $\nu^n \rightharpoonup \nu$ and $M^n \overset{*}{\rightharpoonup}M$ up to a subsequence. Moreover, the lower semicontinuity in Proposition~\ref{prop:cpctbddact} gives
		\[
		\mathcal{A}(\nu_t,M_t) \leq \liminf_{n\to\infty}\mathcal{A}(\nu_t^n,M_t^n) = 0,
		\]
		hence $M=0$ so that $\nu$ is a constant in time. Since $\nu(0) = \mu^\infty$, this implies $\mu^\infty = \nu(1) = \lim_{n\to \infty} \mu^n$ which establishes the weak convergence.
		\\
		\textitunder{$(\mathscr{P}_\tau,d_L)$ is a complete geodesic space}
		\\
		We start with the geodesic property from completely analogous arguments to Erbar~\cite{erbar_gradient_2016}, the remaining statement that $\mathscr{P}_\tau$ equipped with $d_L$ is a complete geodesic space follows. Fix $\tau\in\mathscr{P}_{2,E}(\R^d)$ with $\mu_0,\mu_1\in\mathscr{P}_\tau$, the triangle inequality ensures $d_L(\mu_0,\mu_1)<\infty$ so Proposition~\ref{prop:mincurve} guarantees the existence of a minimizing curve $(\mu,M)\in\mathcal{GCE}_1^{2,E}({\mu_0},{\mu_1})$. One easily sees that this also induces a minimizing curve for intermediate times. More precisely, for every $0 \leq r \leq s \leq 1$, we have that $(t\mapsto\mu_{t+r},t\mapsto M_{t+r})\in\mathcal{GCE}_{s-r}^{2,E}(\mu_r,\mu_s)$ also minimizes $d_L(\mu_r,\mu_s)$.
		
		To show completeness, let $(\mu^n)_{n\in\mathbb{N}}$ be a Cauchy sequence in $\mathscr{P}_\tau$. The sequence is certainly $d_L$-bounded so by Proposition~\ref{prop:cpctbddact}, we can find, up to extraction of a weakly convergent subsequence, $\mu^\infty\in\mathscr{P}_{2,E}(\R^d)$ such that $\mu^n\rightharpoonup\mu^\infty$ in $\mathscr{P}_{2,E}(\R^d)$. Lower semi-continuity of $d_L$ and the Cauchy property of the subsequence give
		\[
		d_L(\mu^n,\mu^\infty) \leq \liminf_{m\to\infty}d_L(\mu^n,\mu^m) \to 0, \quad \text{as }n\to\infty.
		\]
		For any $n\in\mathbb{N}$ the triangle inequality gives
		\[
		d_L(\mu^\infty,\tau) \leq d_L(\mu^\infty,\mu^n) + d_L(\mu^n,\tau) < \infty,
		\]
		So $\mu^\infty\in\mathscr{P}_\tau$. 
	\end{proof}

	\begin{proposition}[Metric derivative]
		\label{prop:metvel}
		A curve $(\mu_t)_{t\in[0,T]}\subset \mathscr{P}_{2,E}(\R^d)$ is absolutely continuous with respect to $d_L$ if and only if there exists a Borel family $(M_t)_{t\in[0.T]}$ belonging to $\mathcal{M}_T^d$ such that $(\mu,M)\in\mathcal{GCE}_T^{2,E}$ with the property that
		\begin{equation*}
			\int_0^T\sqrt{\mathcal{A}(\mu_t,M_t)}dt<\infty.
		\end{equation*}
		In this equivalence, we have a bound on the metric derivative
		\begin{equation*}
			\lim_{h\downarrow0}\frac{d_L^2(\mu_{t+h},\mu_t)}{h^2} =: |\dot{\mu}|^2(t) \leq \mathcal{A}(\mu_t,M_t), \quad \text{a.e. }t\in(0,T).
		\end{equation*}
		Furthermore, there exists a unique Borel family $(\tilde{M}_t)_{t\in[0,T]}$ belonging to $\mathcal{M}^d$ which is characterized by
		\[
		M_t=U \mu_t\otimes\mu_t\qquad\mbox{and}\qquad  U\in T_\mu:= \overline{\{\tilde{\nabla}\phi \,| \, \phi \in C_c^\infty(\mathbb{R}^d)\}}^{L^2(\mu_t\otimes\mu_t)}
		\]
		such that $(\mu,\tilde{M})\in\mathcal{GCE}_T^E(\mu_0,\mu_T)$ where we have equality:
		\begin{equation*}
			|\dot{\mu}|^2(t) = \mathcal{A}(\mu_t,\tilde{M}_t), \quad \text{a.e. }t\in(0,T).
		\end{equation*}
	\end{proposition}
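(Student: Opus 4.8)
The plan is to transport to the grazing continuity equation the by-now standard dynamic-metric machinery of \cite{dolbeault_new_2009,ambrosio_gradient_2008-1,erbar_gradient_2016}, in three parts: (i) the ``if'' implication together with the upper bound $|\dot\mu|^2(t)\le\mathcal{A}(\mu_t,M_t)$; (ii) the ``only if'' implication together with the sharp identity; (iii) the tangent-space selection and its uniqueness. For (i), given $(\mu,M)\in\mathcal{GCE}_T^{2,E}$ with $\int_0^T\sqrt{\mathcal{A}(\mu_t,M_t)}\,dt<\infty$, I would note that for $0\le s\le t\le T$ the time-shifted restriction $(\mu_{s+\cdot},M_{s+\cdot})$ lies in $\mathcal{GCE}_{t-s}^{2,E}(\mu_s,\mu_t)$, so the characterization of $d_L$ in Lemma~\ref{lem:d_Lchar} gives $d_L(\mu_s,\mu_t)\le\int_s^t\sqrt{\mathcal{A}(\mu_r,M_r)}\,dr$. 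Thus $\mu$ is $d_L$-absolutely continuous with admissible bounding function $\sqrt{\mathcal{A}(\mu_\cdot,M_\cdot)}$, and \cite[Theorem 1.1.2]{ambrosio_gradient_2008-1} yields that $|\dot\mu|(t)$ exists for a.e.\ $t$ with $|\dot\mu|(t)\le\sqrt{\mathcal{A}(\mu_t,M_t)}$; squaring gives the bound.

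For (ii), starting from a $d_L$-absolutely continuous curve $\mu$ (so $|\dot\mu|\in L^2(0,T)$), I would run the usual interpolation argument. For a partition $0=t_0<\cdots<t_N=T$, use Proposition~\ref{prop:mincurve} to pick on each $[t_i,t_{i+1}]$ a constant-speed minimizing curve between $\mu_{t_i}$ and $\mu_{t_{i+1}}$, rescaled from $[0,1]$; concatenating produces $(\mu^N,M^N)\in\mathcal{GCE}_T^{2,E}$ with $\mu^N_{t_i}=\mu_{t_i}$ and, using the quadratic scaling of $\mathcal{A}$ under time-reparameterization coming from the explicit formula in Lemma~\ref{lem:actfun},
\[
\int_0^T\mathcal{A}(\mu^N_t,M^N_t)\,dt=\sum_i\frac{d_L^2(\mu_{t_i},\mu_{t_{i+1}})}{t_{i+1}-t_i}\le\int_0^T|\dot\mu|^2(r)\,dr,
\]
the last step being $d_L(\mu_{t_i},\mu_{t_{i+1}})\le\int_{t_i}^{t_{i+1}}|\dot\mu|$ combined with Cauchy--Schwarz. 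Along a sequence of nested partitions with vanishing mesh, the second moments stay $\le E$ and the actions are uniformly bounded, so Proposition~\ref{prop:cpctbddact} extracts a limit $(\bar\mu,\bar M)\in\mathcal{GCE}_T^{2,E}$ with $\int_0^T\mathcal{A}(\bar\mu_t,\bar M_t)\,dt\le\int_0^T|\dot\mu|^2\,dr$. Since $\bar\mu$ agrees with $\mu$ on the dense set of partition nodes and both are weakly continuous in time (for $\mu$, because $d_L$-continuity implies weak continuity by Theorem~\ref{them:existdL}), we get $\bar\mu\equiv\mu$; combined with part (i) applied to $\bar M$, the two integral inequalities squeeze to the pointwise identity $\mathcal{A}(\mu_t,\bar M_t)=|\dot\mu|^2(t)$ a.e., which in particular supplies the missing competitor with $\int_0^T\sqrt{\mathcal{A}(\mu_t,\bar M_t)}\,dt<\infty$.

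For (iii): since $\mathcal{A}(\mu_t,\bar M_t)<\infty$ a.e., the argument behind Lemma~\ref{lem:actfun} gives $\bar M_t\ll\mu_t\otimes\mu_t$ with density $\bar U_t\in L^2(\mu_t\otimes\mu_t)$ and $\mathcal{A}(\mu_t,\bar M_t)=\tfrac12\|\bar U_t\|^2_{L^2(\mu_t\otimes\mu_t)}$. Set $\tilde M_t:=U_t\,\mu_t\otimes\mu_t$ with $U_t:=P_{T_{\mu_t}}\bar U_t$ the orthogonal projection onto the closed subspace $T_{\mu_t}$. As $\tn\phi\in T_{\mu_t}$ and the projection is self-adjoint, $\iiR\tn\phi\,d\tilde M_t=\iiR\tn\phi\,d\bar M_t$, so $(\mu,\tilde M)$ solves the grazing continuity equation with the same endpoints; the moment bound follows from \eqref{eq:1momM2mommu} together with $m_2(\mu_t)\le E$, placing $(\mu,\tilde M)\in\mathcal{GCE}_T^{2,E}$. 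Since $\|U_t\|\le\|\bar U_t\|$, $\mathcal{A}(\mu_t,\tilde M_t)\le\mathcal{A}(\mu_t,\bar M_t)=|\dot\mu|^2(t)$, while part (i) gives the reverse, hence equality. Uniqueness: two admissible families $\tilde M^j_t=U^j_t\mu_t\otimes\mu_t$ with $U^j_t\in T_{\mu_t}$ attached to the same $\mu$ satisfy $\iiR\tn\zeta\cdot(U^1_t-U^2_t)\,d(\mu_t\otimes\mu_t)=0$ for a.e.\ $t$ and every $\zeta$ in a countable family of test functions whose $\tn$-images are dense in each $T_{\mu_t}$; thus $U^1_t-U^2_t\in T_{\mu_t}\cap T_{\mu_t}^\perp=\{0\}$ for a.e.\ $t$.

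The step I expect to be the real obstacle is the measurability in $t$ of the tangent-space projection $t\mapsto P_{T_{\mu_t}}\bar U_t$ (a measurable-selection issue over the moving fibers $T_{\mu_t}$), and the companion fact used in (iii) that $\tn\phi\in L^2(\mu_t\otimes\mu_t)$ for $\phi\in C_c^\infty$ with a countable such family dense in $T_{\mu_t}$. Here one exploits the cancellation $\Pi[v-v_*](v-v_*)=0$, which forces $\tn\phi=O(|v-v_*|^{2+\gamma/2})$ and hence boundedness near the diagonal even in the singular range $\gamma\in[-4,-2)$, together with $m_2(\mu_t)\le E$ to control the far field where $2+\gamma\le 2$. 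The interpolation and compactness bookkeeping in (ii) is routine once Propositions~\ref{prop:mincurve} and~\ref{prop:cpctbddact} are in hand.
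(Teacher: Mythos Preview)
Your proposal is correct and is precisely the detailed execution of the argument the paper defers to \cite[Theorem 5.17]{dolbeault_new_2009}. The three-part structure---upper bound via Lemma~\ref{lem:d_Lchar}, interpolation with compactness via Propositions~\ref{prop:mincurve} and~\ref{prop:cpctbddact}, and tangent projection---matches the standard Dolbeault--Nazaret--Savar\'e machinery the paper invokes without spelling out.
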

	\begin{proof}
		The argument follows exactly as in~\cite[Theorem 5.17]{dolbeault_new_2009}.
	\end{proof}

	\section{Energy dissipation equality}
	\label{sec:EDE}
	The goal in this section is to prove Theorem~\ref{thm:epsequiv} which states that the notions of gradient flow solutions coincide with $\epsilon$-solutions to the Landau equation. To fix ideas, we recall the regularized entropy functionals acting on probability measures
	\[
	\mathcal{H}_\epsilon [\mu] = \int_{\mathbb{R}^d} (\mu*G^\epsilon)(v)\log(\mu*G^\epsilon)(v)dv,
	\]
	with $G^\epsilon(v)$ given by
	\[
	G^\epsilon(v) = \epsilon^{-d}C_d \exp \left\{
	-\japangle{\frac{v}{\epsilon}}
	\right\}.
	\]
	The crucial ingredient to prove Theorem~\ref{thm:epsequiv} is the following
	\begin{proposition}[Chain Rule $\epsilon$]\label{prop:strongupgradeps}
		Fix $\gamma\in[-4,0]$ and suppose $(\mu,M)\in\mathcal{GCE}_{T}^{2,E}$ and
		\begin{equation*}
			\int_0^T \mathcal{A}(\mu_t,M_t)dt < \infty.
		\end{equation*}
		Then, $\sup_{t\in[0,T]}\mathcal{H}_\epsilon[\mu_t]< \infty$ and the `chain rule' holds
		\begin{equation}\label{eq:FTCentropy}
			\mathcal{H}_\epsilon[\mu_r] - \mathcal{H}_\epsilon[\mu_s] = \frac{1}{2}\int_s^r \iiR \tn \left[
			\firstvar{\mathcal{H}_\epsilon}{\mu}
			\right]\cdot dM_t dt, \quad \forall 0\leq s \leq r \leq T.
		\end{equation}
	\end{proposition}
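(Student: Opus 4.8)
The plan is to read~\eqref{eq:FTCentropy} as a chain rule along the grazing continuity equation, using the first variation $\psi_\tau:=\firstvar{\mathcal{H}_\epsilon}{\mu}(\cdot,\tau)=G^\epsilon*\log(\mu_\tau*G^\epsilon)$ as a (non--compactly supported) test function; everything rests on quantitative, $\epsilon$-dependent regularity of the mollified density. First I would prove the pointwise estimates. Since $G^\epsilon\in L^\infty$ and $\iR\mu_\tau*G^\epsilon\,dv=1$, the positive part of $x\log x$ applied to $\mu_\tau*G^\epsilon$ is integrable, and together with $m_2(\mu_\tau)\le E$ and the Carlen-type bound of Lemma~\ref{lem:carlen} for the negative part this gives $\sup_\tau|\mathcal{H}_\epsilon[\mu_\tau]|\le C(\epsilon,E)$. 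The decisive estimates concern $\log(\mu_\tau*G^\epsilon)$: the exponential shape of $G^\epsilon$ gives $|\nabla G^\epsilon|\le\epsilon^{-1}G^\epsilon$ and $|D^2G^\epsilon|\le 2\epsilon^{-2}G^\epsilon$ (using $|\nabla\langle u\rangle|<1$ and $|D^2\langle u\rangle|\le1$), hence $|\nabla(\mu_\tau*G^\epsilon)|\le\epsilon^{-1}\mu_\tau*G^\epsilon$, $|D^2(\mu_\tau*G^\epsilon)|\le2\epsilon^{-2}\mu_\tau*G^\epsilon$, and therefore $\|\nabla\log(\mu_\tau*G^\epsilon)\|_\infty\le\epsilon^{-1}$, $\|D^2\log(\mu_\tau*G^\epsilon)\|_\infty\le3\epsilon^{-2}$, uniformly in $\tau$. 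Anchoring with $|\log(\mu_\tau*G^\epsilon)(0)|\le C(\epsilon,E)$, which follows from $\mu_\tau(B_R)\ge\tfrac12$ for $R=\sqrt{2E}$ (Markov) and $\inf_{B_R}G^\epsilon>0$, and integrating the gradient bound yields the key growth estimate $|\log(\mu_\tau*G^\epsilon)(v)|\le C(\epsilon,E)\langle v\rangle$, uniformly in $\tau$. This is exactly the place where a Gaussian regularization fails, as anticipated in Remark~\ref{rk:Glogdiff}: for $G^{2,\epsilon}$ one only obtains $|\nabla\log(\mu_\tau*G^{2,\epsilon})|\lesssim\langle v\rangle$, which propagates to quadratic growth of $\tn\psi_\tau$ and would require a fourth moment of $\mu_\tau$.

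These bounds transfer to $\psi_\tau=G^\epsilon*\log(\mu_\tau*G^\epsilon)$: convolving against $G^\epsilon\in L^1$ gives $\|\nabla\psi_\tau\|_\infty\le\epsilon^{-1}$, $\|D^2\psi_\tau\|_\infty\le3\epsilon^{-2}$, and (using the Peetre-type inequality $\langle w\rangle\le\langle v\rangle\langle v-w\rangle$ and the finite first moment of $G^\epsilon$) $|\psi_\tau(v)|\le C(\epsilon,E)\langle v\rangle$, all uniform in $\tau$. Splitting $\mathbb{R}^{2d}$ into $\{|v-v_*|\le1\}$, where the mean value inequality gives $|\nabla\psi_\tau(v)-\nabla\psi_\tau(v_*)|\le 3\epsilon^{-2}|v-v_*|$ and $|v-v_*|^{2+\gamma/2}\le1$ because $\gamma\ge-4$, and $\{|v-v_*|\ge1\}$, where $|v-v_*|^{1+\gamma/2}\le|v-v_*|$ because $\gamma\le0$, produces the crucial bound $|\tn\psi_\tau(v,v_*)|\le C(\epsilon)(1+|v|+|v_*|)$, with no diagonal singularity. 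Consequently $\iiR|\tn\psi_\tau|^2\,d\mu_\tau(v)\,d\mu_\tau(v_*)\le C(\epsilon)(1+E)$ uniformly in $\tau$, and Lemma~\ref{lem:lemmeasfun} together with the action bound gives $\int_0^T\iiR|\tn\psi_\tau|\,d|M_t|\,dt\le C(\epsilon,E)\int_0^T\sqrt{\mathcal{A}(\mu_t,M_t)}\,dt<\infty$. A truncation argument in the spirit of Lemmas~\ref{lem:ctsrep} and~\ref{lem:conserve}, using the $C^2$-growth control of $\psi_\tau$, then extends the representation formula of Lemma~\ref{lem:ctsrep} from $C_c^\infty$ test functions to each fixed $\psi_\tau$:
\[
\iR\psi_\tau\,d\mu_r-\iR\psi_\tau\,d\mu_s=\frac12\int_s^r\iiR\tn\psi_\tau\cdot dM_t\,dt,\qquad 0\le s\le r\le T.
\]

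For the chain rule itself, write $h(x)=x\log x$; convexity yields pointwise $h'(\mu_s*G^\epsilon)[(\mu_r-\mu_s)*G^\epsilon]\le h(\mu_r*G^\epsilon)-h(\mu_s*G^\epsilon)\le h'(\mu_r*G^\epsilon)[(\mu_r-\mu_s)*G^\epsilon]$. Integrating in $v$, using that $G^\epsilon$ is even, that $\iR(\mu_r-\mu_s)*G^\epsilon\,dv=0$ eliminates the constant in $h'=1+\log$, and Fubini (legitimate by the linear growth of $\log(\mu_\tau*G^\epsilon)$ and the first moments of $\mu_r,\mu_s$), the outer integrals become $\iR\psi_s\,d(\mu_r-\mu_s)$ and $\iR\psi_r\,d(\mu_r-\mu_s)$. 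Combining with the displayed identity for $\tau=s$ and $\tau=r$ gives $|\mathcal{H}_\epsilon[\mu_r]-\mathcal{H}_\epsilon[\mu_s]|\le C(\epsilon,E)\int_s^r\sqrt{\mathcal{A}(\mu_t,M_t)}\,dt$, so $H(t):=\mathcal{H}_\epsilon[\mu_t]$ is absolutely continuous because $\sqrt{\mathcal{A}(\mu_\cdot,M_\cdot)}\in L^1(0,T)$. To identify $H'$, I would note that $\tfrac1\delta(H(t+\delta)-H(t))$ is squeezed between $\tfrac1{2\delta}\int_t^{t+\delta}\iiR\tn\psi_t\cdot dM_\sigma\,d\sigma$ and the same with $\psi_{t+\delta}$; splitting off $\tfrac1{2\delta}\int_t^{t+\delta}\iiR\tn\psi_\sigma\cdot dM_\sigma\,d\sigma$, which converges to $\tfrac12\iiR\tn\psi_t\cdot dM_t$ at Lebesgue points of this $L^1$ function, the remainder is controlled via Lemma~\ref{lem:lemmeasfun} by a Cauchy--Schwarz average of $\sqrt{\mathcal{A}(\mu_\sigma,M_\sigma)}\,\|\tn(\psi_\sigma-\psi_t)\|_{L^2(\mu_\sigma\otimes\mu_\sigma)}$, which tends to $0$ as $\delta\to0$ because $\mu_\sigma\rightharpoonup\mu_t$ forces $\mu_\sigma*G^\epsilon\to\mu_t*G^\epsilon$ in $C^2_{\mathrm{loc}}$ while the uniform growth bounds and the uniform second moment supply the needed equi-integrability. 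Integrating $H'(t)=\tfrac12\iiR\tn\psi_t\cdot dM_t$ over $[s,r]$ then gives~\eqref{eq:FTCentropy}.

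The main obstacle is the first step: the linear-in-$v$ bound on $\log(\mu_\tau*G^\epsilon)$ and the $\epsilon$-dependent $L^\infty$ bounds on its first two derivatives. They are what make $\psi_\tau$ an admissible test function, what make $\iiR|\tn\psi_\tau|^2\,d\mu_\tau(v)\,d\mu_\tau(v_*)$ finite using only a second moment, and what permit the interchange of limits in the identification of $H'$; and they are precisely the estimates that single out the exponential kernel $G^\epsilon$ over the Maxwellian one. The secondary delicate point is the equi-integrability needed in the identification of $H'$ when $\gamma\in(-2,0]$, a range in which the energy is not known to be conserved along the curve.
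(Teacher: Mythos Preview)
Your argument is correct and rests on the same key estimates as the paper --- the Carlen--Carvalho linear growth of $\log(\mu_\tau*G^\epsilon)$ and the $L^\infty$ bounds on its first two derivatives from Lemma~\ref{lem:estlogdiff}, leading to the growth control of $\tn\psi_\tau$ (Lemma~\ref{lem:esttnfirstvar}) --- but the route to the chain rule is genuinely different. The paper mollifies the curve $(\mu,M)$ in \emph{time} with a parameter $\delta>0$, differentiates $t\mapsto\mathcal{H}_\epsilon[\mu_t^\delta]$ directly via dominated convergence (this is where the time smoothness enters), obtains~\eqref{eq:FTCentropy} for the regularized pair, and then passes $\delta\downarrow0$ using uniform-in-$\delta$ estimates and the locally uniform convergence $\tn\firstvar{\mathcal{H}_\epsilon}{\mu_t^\delta}\to\tn\firstvar{\mathcal{H}_\epsilon}{\mu_t}$. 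You instead use the convexity of $x\log x$ to sandwich $\mathcal{H}_\epsilon[\mu_r]-\mathcal{H}_\epsilon[\mu_s]$ between $\int\psi_s\,d(\mu_r-\mu_s)$ and $\int\psi_r\,d(\mu_r-\mu_s)$, feed in the extended representation formula for the frozen test functions $\psi_s,\psi_r$, deduce absolute continuity of $t\mapsto\mathcal{H}_\epsilon[\mu_t]$, and identify the a.e.\ derivative by a Lebesgue-point argument. The paper's approach is somewhat more streamlined --- once the $\delta$-regularized identity is in hand, the passage to the limit is a single dominated-convergence step --- whereas your squeezing argument buys the absolute continuity of $H$ and the strong-upper-gradient inequality (Remark~\ref{rk:stronguppergradient}) almost for free, before any derivative is identified. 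Your concern about equi-integrability when $\gamma\in(-2,0]$ is not an obstacle: since the curve lives in $\mathcal{GCE}_T^{2,E}$ you have $m_2(\mu_t)\le E$ uniformly by definition, which together with the bound $|\tn\psi_\tau|\le C(\epsilon)(1+|v|+|v_*|)$ gives the tightness needed for $\|\tn(\psi_\sigma-\psi_t)\|_{L^2(\mu_\sigma\otimes\mu_\sigma)}\to0$.
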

	\begin{remark}\label{rk:stronguppergradient}
		Recall the expression for the dissipation
		$$
		D_\epsilon[\mu]=\frac{1}{2}\iiR \left|\tn \left[
		\firstvar{\mathcal{H}_\epsilon}{\mu}
		\right]\right|^2 d\mu(v)d\mu(v_*).
		$$
		Using a time integrated version of Lemma~\ref{lem:lemmeasfun}, we have the estimate
		\begin{equation*}
			\frac{1}{2}\int_s^r \iiR \left|\tn \left[
			\firstvar{\mathcal{H}_\epsilon}{\mu}
			\right]\right|\cdot d|M_t|(v,v_*) dt \leq
			\int_s^r \mathcal{A}(\mu_t,M_t)^\frac{1}{2} D_\epsilon[\mu_t]^\frac{1}{2}\;dt.
		\end{equation*}
		Therefore, under the hypothesis of Proposition~\ref{prop:strongupgradeps}, we have that
		$$
		|\mathcal{H}_\epsilon(\mu_r)-\mathcal{H}_\epsilon(\mu_r)|\le \int_s^r |\dot{\mu}|(t) D_\epsilon[\mu_t]^\frac{1}{2}\;dt,
		$$
		which implies that $D_\epsilon[\mu_t]^\frac{1}{2}$ is a strong upper gradient of $\mathcal{H}_\epsilon$, see Definition~\ref{defn:sug}.
	\end{remark}
	Taking Proposition~\ref{prop:strongupgradeps} for granted, we can prove Theorem~\ref{thm:epsequiv}.
	\begin{proof}
		[Proof of Theorem~\ref{thm:epsequiv}]
		Throughout, $\mu=f \mathcal{L}$ is a curve of probability measures with uniformly bounded second moment.
		
		\textitunder{Weak $\epsilon$-solution $\implies$ Curve of maximal slope}
		\\
		Consider $f$ an $\epsilon$-solution to the Landau equation. Define $m = - ff_* \tn \firstvar{\mathcal{H}_\epsilon}{f}$ so that the pair of measures $(\mu = f \mathcal{L}, M  = m \mathcal{L}\otimes \mathcal{L})$ therefore belong to $\mathcal{GCE}_{T}^E$. Indeed, the distributional grazing continuity equation from Definition~\ref{def:GCE} is precisely the weak $\epsilon$ Landau equation. Based on the definition of $M$ and the finite $\mathcal{H}_{\epsilon}$ dissipation, we have the bound
		\[
		\int_0^T\mathcal{A}(\mu_t,M_t)dt = \int_0^TD_\epsilon (f_t) dt< \infty,
		\]
		which implies the weak continuity of $\mu$. By Proposition~\ref{prop:metvel}, we have
		\begin{equation*}
			|\dot{\mu}|^2(t) = \mathcal{A}(\mu_t,M_t) = D_\epsilon(f_t)<\infty, \quad \text{a.e. }t\in[0,T].
		\end{equation*}
		Using Proposition~\ref{prop:strongupgradeps}, we have for any $0 \leq s \leq r \leq T$
		\begin{align*}
			\mathcal{H}_{\epsilon}[\mu_r] - \mathcal{H}_{\epsilon}[\mu_s] + \frac{1}{2}\int_s^rD_\epsilon(\mu_t)dt + \frac{1}{2}\int_s^r |\dot{\mu}|^2(t)dt 	\leq 0.
		\end{align*}
		According to Definition~\ref{defn:cms}, this is the curve of maximal slope property.
		
		\textitunder{Curve of maximal slope $\implies$ weak $\epsilon$-solution}
		\\
		Assume that $\mu = f\mathcal{L}$ is a curve of maximal slope for $\mathcal{H}_{\epsilon}$ with respect to the upper gradient $\sqrt{D_\epsilon}$. Since $\mu$ is absolutely continuous with respect to $d_L$, Proposition~\ref{prop:metvel} guarantees existence of a unique curve $M : t\in[0,T]\mapsto M_t \in \mathcal{M}^d$ such that $\int_0^T \sqrt{\mathcal{A}(\mu_t,M_t)}dt < \infty$ and $|\dot{\mu}|^2(t) = \mathcal{A}(\mu_t,M_t)$ a.e. $t\in[0,T]$. Furthermore, the pair $(\mu,M)\in\mathcal{GCE}_{T}^E$. According to Lemma~\ref{lem:actfun}, let $M = m\mathcal{L}\otimes \mathcal{L}$ for some measurable function $m$. We apply the chain rule~\eqref{eq:FTCentropy} with Cauchy-Schwarz and Young's inequalities with minus signs in the follow computations.
		\begin{align*}
			\mathcal{H}_{\epsilon}[f_T] - \mathcal{H}_{\epsilon}[f_0] &= \frac{1}{2}\int_0^T \iiR \tn \firstvar{\mathcal{H}_\epsilon}{f} \cdot m dvdv_* dt 	\\
			&\geq -\frac{1}{2}\int_0^T \left(
			\iiR ff_* \left|
			\tn \firstvar{\mathcal{H}_\epsilon}{f}
			\right|^2dvdv_*
			\right)^\frac{1}{2}\left(
			\iiR \frac{|m|^2}{ff_*}dvdv_*
			\right)^\frac{1}{2}dt 	\\
			&\geq -\frac{1}{2} \int_0^T 
			\left(\frac{1}{2}\iiR ff_* \left|
			\tn \firstvar{\mathcal{H}_\epsilon}{f}
			\right|^2dvdv_*\right)
			dt
			-\frac{1}{2}\int_0^T\left(\frac{1}{2}
			\iiR \frac{|m|^2}{ff_*}dvdv_*\right)
			dt 	\\
			&= -\frac{1}{2}\int_0^T D_\epsilon(f_t)dt - \frac{1}{2}\int_0^T |\dot{f}|^2(t)dt.
		\end{align*}
		All the inequalities in the calculations above are actually equalities owing to the fact that $\mu$ is a curve of maximal slope. In particular, since we have the equality in the Young's inequality, this implies that $\frac{m}{\sqrt{ff_*}} = -\sqrt{ff_*}\tn \firstvar{\mathcal{H}_\epsilon}{f}$. As in the previous direction, the weak $\epsilon$ Landau equation coincides with the grazing continuity equation when $m$ is equal to $-ff_* \tn \firstvar{\mathcal{H}_\epsilon}{f}$.
	\end{proof}
	The rest of this section is devoted to proving Proposition~\ref{prop:strongupgradeps}. We need some lemmata to establish crucial estimates. The following result is a variation of~\cite[Lemma 2.6]{CC92}.
	\begin{lemma}[Carlen-Carvalho~\cite{CC92}]
		\label{lem:carlen}
		Let $\mu$ be a probability measure on $\mathbb{R}^d$ with finite second moment/energy, $m_2(\mu) \leq E$ for $E>0$. Then, for every $\epsilon>0$, there exists a constant $C=C(\epsilon,E)>0$ such that
		\[
		|\log (\mu*G^\epsilon)(v)| \leq C\japangle{\frac{v}{\epsilon}}.
		\]
	\end{lemma}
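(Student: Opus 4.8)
\textbf{Proof plan for Lemma~\ref{lem:carlen}.}

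The plan is to obtain two-sided bounds on $(\mu*G^\epsilon)(v)$ of the form $c_1 \exp(-c_2\japangle{v/\epsilon}) \le (\mu*G^\epsilon)(v) \le c_3$, and then take logarithms. Since the final inequality is stated with $|\cdot|$, I need both an upper bound on $\log(\mu*G^\epsilon)$ (which amounts to an upper bound on $\mu*G^\epsilon$, not requiring the moment) and a lower bound on $\log(\mu*G^\epsilon)$ (equivalently a lower bound on $\mu*G^\epsilon$, which is where the second moment $m_2(\mu)\le E$ enters).

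First I would handle the easy direction. Writing $(\mu*G^\epsilon)(v) = \iR G^\epsilon(v-w)\,d\mu(w)$ and using $\|G^\epsilon\|_{L^\infty} = \epsilon^{-d}C_d \le \epsilon^{-d}C_d$ (with $\japangle{\cdot}\ge 1$ so $\exp(-\japangle{\cdot}) \le e^{-1}$), we get $(\mu*G^\epsilon)(v) \le \epsilon^{-d}C_d$ for all $v$, since $\mu$ is a probability measure. Hence $\log(\mu*G^\epsilon)(v) \le \log(\epsilon^{-d}C_d) =: C_0$, which is certainly bounded by $C_0^+ \japangle{v/\epsilon}$ since $\japangle{v/\epsilon}\ge 1$. (If $C_0 \le 0$ this bound is trivial anyway.)

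For the lower bound, the idea is that $\mu$, having bounded second moment, cannot concentrate all its mass far from the origin: by Markov/Chebyshev, $\mu(\{|w|\le R\}) \ge 1 - E/\japangle{R}^2 \ge 1/2$ for $R = R(E)$ large enough. Then for any $v$,
\[
(\mu*G^\epsilon)(v) = \iR G^\epsilon(v-w)\,d\mu(w) \ge \int_{|w|\le R} G^\epsilon(v-w)\,d\mu(w) \ge \tfrac{1}{2}\inf_{|w|\le R} G^\epsilon(v-w).
\]
Using the triangle inequality $\japangle{(v-w)/\epsilon}^2 = 1 + |v-w|^2/\epsilon^2 \le 1 + 2|v|^2/\epsilon^2 + 2R^2/\epsilon^2$, one bounds $\japangle{(v-w)/\epsilon} \lesssim \japangle{v/\epsilon} + R/\epsilon$ for $|w|\le R$, so that $G^\epsilon(v-w) \ge \epsilon^{-d}C_d \exp(-c\japangle{v/\epsilon} - cR/\epsilon)$ for a dimensional constant $c$. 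This yields $(\mu*G^\epsilon)(v) \ge c_1(\epsilon,E)\exp(-c\,\japangle{v/\epsilon})$, hence $\log(\mu*G^\epsilon)(v) \ge \log c_1(\epsilon,E) - c\japangle{v/\epsilon} \ge -C(\epsilon,E)\japangle{v/\epsilon}$ after absorbing the (possibly negative) constant $\log c_1$ into the coefficient using $\japangle{v/\epsilon}\ge 1$. Combining the two directions gives $|\log(\mu*G^\epsilon)(v)| \le C(\epsilon,E)\japangle{v/\epsilon}$.

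The only mildly delicate point is bookkeeping the constants so that the single coefficient $C(\epsilon,E)$ dominates both an additive constant and the linear-in-$\japangle{v/\epsilon}$ term; this is painless because $\japangle{v/\epsilon} \ge 1$ lets any additive constant be absorbed. There is no real obstacle here — the lemma is a soft consequence of the Gaussian-type tails of $G$ together with the no-escape-of-mass estimate from the second moment bound; the argument is essentially the one in~\cite[Lemma 2.6]{CC92}, and the role of the parameter $s$ (for the general kernel $G^{s,\epsilon}$) would only change $\exp(-\japangle{\cdot})$ to $\exp(-\japangle{\cdot}^s)$ without affecting the structure.
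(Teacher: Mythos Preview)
Your proposal is correct and follows essentially the same approach as the paper: a trivial upper bound from $\|G^\epsilon\|_{L^\infty}$, and a lower bound obtained by restricting the convolution integral to a ball $\{|w|\le R\}$ where Chebyshev (via $m_2(\mu)\le E$) guarantees at least half the mass, together with the triangle-type estimate $\japangle{(v-w)/\epsilon}\lesssim \japangle{v/\epsilon}+\japangle{R/\epsilon}$ to lower-bound the kernel. The bookkeeping of constants using $\japangle{v/\epsilon}\ge 1$ is exactly as in the paper.
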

	\begin{proof} 
		Starting with an upper bound, we easily see
		\[
		\mu*G^\epsilon (v) = \int_{\mathbb{R}^d} G^\epsilon(v-v')d\mu(v') \lesssim_\epsilon 1.
		\]
		Turning to the lower bound, we cut off the integration domain to $|v'|\leq R$, for some $R>0$ to be chosen later. We estimate, for $\epsilon>0$ small enough
		\[
		\japangle{\frac{v-v'}{\epsilon}} = \sqrt{1+\left|
			\frac{v-v'}{\epsilon}
			\right|^2} \leq \sqrt{1+2\left|
			\frac{v}{\epsilon}
			\right|^2 + 2\left(
			\frac{R}{\epsilon}
			\right)^2} \leq \sqrt{2}\left(
		\japangle{\frac{v}{\epsilon}} + \japangle{\frac{R}{\epsilon}}
		\right).
		\]
		This is substituted into $G^\epsilon(v-v')$ to obtain
		\[
		\mu*G^\epsilon(v) \geq \int_{|v'|\leq R} G^\epsilon(v-v')d\mu(v') \gtrsim_\epsilon \exp\left\{
		-\sqrt{2}\left(\japangle{\frac{v}{\epsilon}} + \japangle{\frac{R}{\epsilon}}\right)
		\right\}\int_{|v'|\leq R} d\mu(v').
		\]
		At this point, we appeal to Chebyshev's inequality to see
		\[
		\int_{|v'|\leq R} d\mu(v') = 1-\int_{|v'|\geq R} d\mu(v') \geq 1- \frac{1}{R^2}\int_{|v'|\geq R} |v'|^2 d\mu(v').
		\]
		We can now choose, for example, large $R$ such that $1-\frac{E}{R^2} \geq \frac{1}{2}$ to uniformly lower bound the integral $\int_{|v'|\leq R}d\mu(v')$ away from $0$ and then conclude the result after applying logarithms.
	\end{proof}
	\begin{lemma}[log-derivative estimates]
		\label{lem:estlogdiff}
		For fixed $\epsilon>0$ we have the formula
		\begin{equation}
			\label{eq:diffGseps}
			\nabla G^\epsilon(v) = -\frac{1}{\epsilon}\japangle{\frac{v}{\epsilon}}^{-1}G^\epsilon(v) \frac{v}{\epsilon}.
		\end{equation}
		For $\mu\in\mathscr{P}(\Rd)$, denoting $\partial^i = \frac{\partial}{\partial v^i}$ and $\partial^{ij} = \frac{\partial^2}{\partial{v^i}\partial{v^j}}$, we obtain
		\begin{equation}
			\label{eq:extlogdiffsgeq1}
			\left|
			\nabla \log (\mu*G^\epsilon)(v)
			\right|\leq \frac{1}{\epsilon}, \quad \left|
			\partial^{ij} \log (\mu*G^\epsilon)(v)
			\right|\leq \frac{4}{\epsilon^2}.
		\end{equation}
	\end{lemma}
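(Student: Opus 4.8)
The plan is to prove the pointwise formula by a direct chain–rule computation, and then obtain the two estimates by commuting the derivatives with the convolution and using that $G^\epsilon$ itself satisfies pointwise bounds of the form $|\nabla G^\epsilon|\leq \epsilon^{-1}G^\epsilon$ and $|\partial^{ij}G^\epsilon|\leq 3\epsilon^{-2}G^\epsilon$. Writing $h(v):=\japangle{v/\epsilon}=\sqrt{1+|v/\epsilon|^2}$, one computes $\partial^i h = v^i/(\epsilon^2 h)$; since $G^\epsilon=\epsilon^{-d}C_d e^{-h}$, the chain rule gives $\nabla G^\epsilon = -(\nabla h)\,G^\epsilon = -\frac{1}{\epsilon}\japangle{v/\epsilon}^{-1}G^\epsilon\frac{v}{\epsilon}$, which is \eqref{eq:diffGseps}. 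Consequently $|\nabla G^\epsilon| = \frac{1}{\epsilon}\,|v/\epsilon|\,\japangle{v/\epsilon}^{-1}G^\epsilon \leq \frac{1}{\epsilon}G^\epsilon$, using $|w|/\japangle{w}\leq 1$.

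Next I would differentiate once more. Starting from $\partial^i G^\epsilon = -\frac{v^i}{\epsilon^2 h}G^\epsilon$ and applying the product and quotient rules, one gets
\[
\partial^{ij}G^\epsilon = \left(-\frac{\delta^{ij}}{\epsilon^2 h}+\frac{v^i v^j}{\epsilon^4 h^3}+\frac{v^i v^j}{\epsilon^4 h^2}\right)G^\epsilon .
\]
Each bracketed term is bounded in absolute value by $\epsilon^{-2}$: for the first because $h\geq 1$; for the last two because $|v^i v^j|\leq |v|^2$ together with $|v/\epsilon|^2/h^2\leq 1$ and $1/h\leq 1$ (note $|v/\epsilon|^2 = h^2-1\leq h^2$). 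Hence $|\partial^{ij}G^\epsilon|\leq 3\epsilon^{-2}G^\epsilon$.

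Finally, set $g:=\mu*G^\epsilon$. Since $\mu$ is a probability measure, $\nabla g = \mu*\nabla G^\epsilon$ and $\partial^{ij}g=\mu*\partial^{ij}G^\epsilon$, and integrating the two pointwise bounds against $\mu$ yields $|\nabla g|\leq \epsilon^{-1}g$ and $|\partial^{ij}g|\leq 3\epsilon^{-2}g$. The first immediately gives $|\nabla\log g| = |\nabla g|/g\leq \epsilon^{-1}$, the first inequality in \eqref{eq:extlogdiffsgeq1}. For the second, using $\partial^{ij}\log g = \partial^{ij}g/g - (\partial^i g)(\partial^j g)/g^2$ and the two bounds gives $|\partial^{ij}\log g|\leq 3\epsilon^{-2}+\epsilon^{-2}=4\epsilon^{-2}$. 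There is no real obstacle here; the only point requiring care is bookkeeping of the powers of $\epsilon$ and of $h=\japangle{v/\epsilon}$ in the second-derivative computation so that the constant comes out exactly $4$. It is worth remarking that it is precisely the exponential (as opposed to Gaussian) tail of $G^\epsilon$ that makes $\nabla\log(\mu*G^\epsilon)$ bounded rather than linearly growing, which is why this kernel is used.
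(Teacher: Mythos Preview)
Your proof is correct and follows essentially the same approach as the paper: both compute \eqref{eq:diffGseps} by the chain rule, then use $|\nabla G^\epsilon|\leq \epsilon^{-1}G^\epsilon$ and $|\partial^{ij}G^\epsilon|\leq 3\epsilon^{-2}G^\epsilon$ (convolved against $\mu$) together with the identity $\partial^{ij}\log g=\partial^{ij}g/g-(\partial^i g)(\partial^j g)/g^2$ to obtain the constant $4/\epsilon^2$. The only cosmetic difference is that the paper estimates directly under the convolution integral rather than first isolating the pointwise bounds on $G^\epsilon$ as you do.
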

	\begin{proof}
		Equation~\eqref{eq:diffGseps} is a direct computation after noticing
		\[
		\frac{\nabla G^\epsilon}{G^\epsilon} = \nabla \log G^\epsilon = \nabla \left(
		-\japangle{\frac{v}{\epsilon}} + const.
		\right) = -\frac{1}{\epsilon}\japangle{\frac{v}{\epsilon}}^{-1}\frac{v}{\epsilon}.
		\]
		The first order log-derivative estimate of~\eqref{eq:extlogdiffsgeq1} is calculated using formula~\eqref{eq:diffGseps} to obtain
		\begin{align*}
			&\quad |\nabla (\mu*G^\epsilon)(v)| = |\mu * \nabla G^\epsilon(v)| \leq \frac{1}{\epsilon}\iR \japangle{\frac{v-v'}{\epsilon}}^{-1}\left|\frac{v-v'}{\epsilon}\right| G^\epsilon(v-v')d\mu(v') 	\\
			&\leq \frac{1}{\epsilon}\iR G^\epsilon(v-v')d\mu(v') = \frac{1}{\epsilon} (\mu*G^\epsilon)(v).
		\end{align*}
		For the second order, we first look at $\partial^{ij}\mu*G^\epsilon$ which can be computed with the help of~\eqref{eq:diffGseps}
		\begin{align*}
			&|\partial^{ij}\mu*G^\epsilon(v)| = \left|\partial^i \left(
			-\frac{1}{\epsilon}\iR \japangle{\frac{v-v'}{\epsilon}}^{-1}\frac{v^j-v'^j}{\epsilon}G^\epsilon(v-v')d\mu(v')
			\right)\right| =	\\
			& \left|\frac{1}{\epsilon^2}\iR \left(
			\japangle{\frac{v-v'}{\epsilon}}^{-3}\frac{v^i-v'^i}{\epsilon}\frac{v^j-v'^j}{\epsilon} + \delta^{ij}\japangle{\frac{v-v'}{\epsilon}}^{-1} \right.\right.\\
			&\qquad\qquad\qquad\qquad\qquad\qquad\qquad\qquad\qquad \left.\left.- \japangle{\frac{v-v'}{\epsilon}}^{-2}\frac{v^i-v'^i}{\epsilon}\frac{v^j-v'^j}{\epsilon}
			\right) G^\epsilon(v-v')d\mu(v')\right| 	\\
			&\leq \frac{3}{\epsilon^2}\mu*G^\epsilon(v).
		\end{align*}
		Combining this estimate with the previous first order one, we have
		\begin{align*}
			&\quad \left|\partial^{ij} \log (\mu*G^\epsilon)(v)\right| = \left|\frac{\partial^{ij}\mu*G^\epsilon}{\mu*G^\epsilon} - \frac{(\partial^i\mu*G^\epsilon)(\partial^j\mu*G^\epsilon)}{(\mu*G^\epsilon)^2}\right|	 
			\leq \frac{4}{\epsilon^2}.
		\end{align*}
	\end{proof}
\begin{lemma}
	\label{lem:esttnfirstvar}
	Fix $\epsilon>0$ and $\gamma\in[-4,0]$ with $\mu\in\mathscr{P}_{2,E}(\Rd)$ for some $E>0$. We have
	\begin{enumerate}
		\item \textitunder{Moderately soft case $\gamma\in[-2,0]$:}
		\[
		\left|\tn\frac{\delta\mathcal{H}_\epsilon}{\delta \mu}\right| = \left| \tn [G^\epsilon*\log (\mu*G^\epsilon)](v,v_*)\right| \lesssim_\epsilon |v|^{1+\frac{\gamma}{2}}+ |v_*|^{1+\frac{\gamma}{2}}.
		\]
		\item \textitunder{Very soft case $\gamma\in[-4,-2]$:}
		\[
		\left|\tn\frac{\delta\mathcal{H}_\epsilon}{\delta \mu}\right|  \lesssim_\epsilon 1.
		\]
	\end{enumerate}
	In particular, it holds
	\[
	\iiR \left|\tn\frac{\delta\mathcal{H}_\epsilon}{\delta \mu}\right|^2 d\mu(v)d\mu(v_*) \leq E.
	\]
\end{lemma}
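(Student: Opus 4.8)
The strategy is to reduce the whole statement to pointwise control of the first variation $\psi := \firstvar{\mathcal{H}_\epsilon}{\mu} = G^\epsilon * \log(\mu * G^\epsilon)$ together with its first two derivatives, and then to integrate. First I would record that, by Lemma~\ref{lem:carlen}, $\log(\mu*G^\epsilon)$ is finite with at most linear growth, so $\psi$ is well defined and smooth and one may differentiate under the convolution, moving the derivatives onto the factor $\log(\mu*G^\epsilon)$. Combining this with the logarithmic derivative estimates of Lemma~\ref{lem:estlogdiff}, namely $|\nabla\log(\mu*G^\epsilon)|\le \epsilon^{-1}$ and $|\partial^{ij}\log(\mu*G^\epsilon)|\le 4\epsilon^{-2}$, and the fact that $G^\epsilon$ is a probability density, I obtain
\[
|\nabla\psi(v)| \le \frac{1}{\epsilon}, \qquad |\nabla\psi(v) - \nabla\psi(v_*)| \lesssim \frac{1}{\epsilon^2}\,|v-v_*| \qquad \text{for all } v, v_*\in\Rd,
\]
the second inequality by applying the mean value theorem componentwise to $\nabla\psi$.

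For the \textbf{moderately soft case} $\gamma\in[-2,0]$, write $p = 1 + \tfrac{\gamma}{2} \in [0,1]$. Since $\Pi[v-v_*]$ has operator norm at most one,
\[
\left|\tn\psi\right| \le |v-v_*|^{p}\big(|\nabla\psi(v)| + |\nabla\psi(v_*)|\big) \le \frac{2}{\epsilon}\,|v-v_*|^{p},
\]
and subadditivity of $t\mapsto t^{p}$ for $p\le 1$ gives $|v-v_*|^{p} \le (|v|+|v_*|)^{p} \le |v|^{p} + |v_*|^{p}$, which is the claimed bound. For the \textbf{very soft case} $\gamma\in[-4,-2]$, now $p\in[-1,0]$ and the weight is singular on the diagonal, so I would split according to whether $|v-v_*|\ge 1$ or not. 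When $|v-v_*|\ge 1$ the weight satisfies $|v-v_*|^{p}\le 1$ and the first-derivative bound already yields $|\tn\psi|\le 2/\epsilon$. When $|v-v_*|<1$ the projection is of no use, but the second-order estimate is: $|\tn\psi| \le |v-v_*|^{p}\,|\nabla\psi(v)-\nabla\psi(v_*)| \lesssim \epsilon^{-2}|v-v_*|^{1+p} = \epsilon^{-2}|v-v_*|^{2+\gamma/2}$, and since $2+\tfrac{\gamma}{2}\ge 0$ for $\gamma\ge -4$ and $|v-v_*|<1$, this is $\lesssim_\epsilon 1$; hence $|\tn\psi|\lesssim_\epsilon 1$ on all of $\Rtwod$. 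I expect this diagonal splitting to be the only genuine obstacle: it is precisely the mechanism already exploited in Lemma~\ref{lem:conserve}, where the singular weight is absorbed by a mean value estimate — here supplied by the Hessian bound of Lemma~\ref{lem:estlogdiff}. It is also where the choice of the exponential kernel $G^\epsilon$ (rather than the Maxwellian one) would matter, through the availability of these uniform derivative bounds.

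Finally, for the integrated estimate I would square the pointwise bounds and integrate against $\mu\otimes\mu$. In the moderately soft case $|\tn\psi|^2 \lesssim_\epsilon |v|^{2+\gamma} + |v_*|^{2+\gamma}$, and since $2+\gamma\in[0,2]$ we have $|v|^{2+\gamma}\le \japangle{v}^2$, so $\iiR|\tn\psi|^2\,d\mu(v)\,d\mu(v_*) \lesssim_\epsilon 2\,m_2(\mu) \le 2E$; in the very soft case $|\tn\psi|^2 \lesssim_\epsilon 1 \le m_2(\mu)\le E$. In either regime one obtains a bound proportional to $E$, with the implicit constant depending only on $\epsilon$ and $d$, which gives the asserted inequality.
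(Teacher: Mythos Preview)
Your argument is correct and follows essentially the same route as the paper: both use the first-order bound $|\nabla\log(\mu*G^\epsilon)|\le 1/\epsilon$ together with subadditivity of $t\mapsto t^{1+\gamma/2}$ in the moderately soft case, and the near/far splitting $|v-v_*|\lessgtr 1$ with the second-order bound from Lemma~\ref{lem:estlogdiff} for the mean-value estimate in the very soft case. The only cosmetic difference is that the paper writes out the convolution integral explicitly and applies the mean-value estimate to $\nabla\log(\mu*G^\epsilon)$ inside the integral, whereas you first pass the derivative through the convolution to work with $\nabla\psi$ directly; the resulting bounds are identical.
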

\begin{proof}
	We develop the expression for $\tn \frac{\delta \mathcal{H}_\epsilon}{\delta \mu}$ in integral form to be used throughout this proof.
	\begin{align}
		\label{eq:tnfattail}
		\begin{split}
			&\quad \tn \frac{\delta \mathcal{H}_\epsilon}{\delta \mu} = \tn G^\epsilon * \log(\mu*G^\epsilon)(v,v_*) 	\\
			&= |v-v_*|^{1+\frac{\gamma}{2}} \Pi[v-v_*](\nabla_v G^\epsilon*\log(\mu*G^\epsilon)(v) - \nabla_{v_*}G^\epsilon*\log (\mu*G^\epsilon)(v_*)) 	\\
			&= |v-v_*|^{1+\frac{\gamma}{2}}\Pi[v-v_*]\iR G^\epsilon(v')\left(
			\frac{\nabla \mu*G^\epsilon}{\mu*G^\epsilon}(v-v') - \frac{\nabla\mu*G^\epsilon}{\mu*G^\epsilon}(v_*-v')
			\right)dv'.
		\end{split}
	\end{align}
	\begin{enumerate}
		\item \textitunder{Moderately soft case $\gamma\in[-2,0]$:} We use (a concave version of) the triangle inequality (valid since $1+\frac{\gamma}{2}\geq 0$) and the first estimate of~\eqref{eq:extlogdiffsgeq1} to bound the last line of~\eqref{eq:tnfattail}
		\begin{align*}
			&\quad \left|
			\tn \frac{\delta \mathcal{H}_\epsilon}{\delta \mu}
			\right| \leq 2^{1+\frac{\gamma}{2}}(|v|^{1+\frac{\gamma}{2}}+ |v_*|^{1+\frac{\gamma}{2}})\frac{2}{\epsilon}\iR G^\epsilon(v')dv' \lesssim_\epsilon|v|^{1+\frac{\gamma}{2}} + |v_*|^{1+\frac{\gamma}{2}}.
		\end{align*}
		\item \textitunder{Very soft case $\gamma\in[-4,-2]$:} We perform estimates in two cases, the far field $|v-v_*| \geq 1$ and near field $|v-v_*|\leq 1$.
		\\
		\textitunder{$|v-v_*|\geq 1$:}
		\\
		In the far field, we have $|v-v_*|^{1+\frac{\gamma}{2}} \leq 1$ hence we can brutally estimate~\eqref{eq:tnfattail} using again the first estimate of~\eqref{eq:extlogdiffsgeq1} to obtain, similar to the moderately soft case, the estimate
		\[
		\left|
		\tn \frac{\delta \mathcal{H}_\epsilon}{\delta \mu}
		\right| \leq \frac{2}{\epsilon}.
		\]
		\textitunder{$|v-v_*|\leq 1$:}
		\\
		We can remove the singularity from the weight with a mean-value estimate and the second estimate of~\eqref{eq:extlogdiffsgeq1}
		\[
		\left|
		\frac{\nabla \mu*G^\epsilon}{\mu*G^\epsilon}(v-v') - \frac{\nabla\mu*G^\epsilon}{\mu*G^\epsilon}(v_*-v')
		\right| \leq \sup_{i,j=1,\dots,d}\left|\left|\partial^i\left(
		\frac{\partial^j\mu*G^\epsilon}{\mu*G^\epsilon}
		\right)\right|\right|_{L^\infty} |v-v_*| \leq\frac{4}{\epsilon^2}|v-v_*|.
		\]
		Inserting this into~\eqref{eq:tnfattail}, we have
		\[
		\left|
		\tn \frac{\delta \mathcal{H}_\epsilon}{\delta \mu}
		\right| \leq \frac{4}{\epsilon^2}|v-v_*|^{2+\frac{\gamma}{2}} \iR G^\epsilon(v')dv' \leq \frac{4}{\epsilon^2}.
		\]
	\end{enumerate}
\end{proof}
\begin{remark}
	\label{rk:Glogdiff}
	Originally, we considered the general family of convolution kernels $G^{s,\epsilon}$ described in Section~\ref{sec:notation}. Besides the context of the Landau equation, Lemma~\ref{lem:estlogdiff} (excluding the second order log-derivative estimate) can be generalized to this family of $s$-order tailed exponential distributions with additional moment assumptions on $\mu$. In particular, equations~\eqref{eq:diffGseps} and~\eqref{eq:extlogdiffsgeq1} (for $s\geq 1$) become
	\[
	\frac{\nabla G^{s,\epsilon}}{G^{s,\epsilon}}(v) = - \frac{s}{\epsilon}\japangle{\frac{v}{\epsilon}}^{s-2}\frac{v}{\epsilon}, \quad \frac{|\nabla (\mu * G^{s,\epsilon})|}{\mu*G^{s,\epsilon}}(v) \lesssim \frac{1}{\epsilon^s}\japangle{v}^{s-1}.
	\]
	Since Maxwellians are known to be stationary solutions for the Landau equation, we wanted to perform the regularization with $s=2$. However, the analogous estimates of Lemma~\ref{lem:estlogdiff} for $s=2$ are not sufficient for Lemma~\ref{lem:esttnfirstvar} in the $\mathscr{P}_2$ framework. For example, in the moderately soft potential case, the estimate reads
	\[
	\left|\tn \firstvar{\mathcal{H}_{2,\epsilon}}{\mu}\right| \lesssim_\epsilon \japangle{v}^{2+\frac{\gamma}{2}} + \japangle{v_*}^{2+\frac{\gamma}{2}}\notin L^2(\mu\otimes\mu).
	\]
	However, there is one value of $\gamma=-2$ for which the estimates hold when using a Maxwellian regularization kernel $G^{2,\epsilon}$. A restriction to $\mathscr{P}_4$ resolves the issue mentioned above for the moderately soft potential case, but then a fourth moment propagation is needed which we did not pursue. A similar issue is present in the very soft potential case.
\end{remark}
\begin{proof}[Proof of Proposition~\ref{prop:strongupgradeps}]
	To prove equation~\eqref{eq:FTCentropy}, our strategy is to regularize the pair $(\mu,M)$ in time with parameter $\delta>0$ and differentiate the regularization. Then we obtain uniform bounds in $\delta$ needed to take the limit $\delta\to 0$. 
	
	\textitunder{Finite regularized entropy}
	\\
	We have the following chain of inequalities
	\[
	\mathcal{H}_\epsilon[\mu_t] = \iR (\mu_t*G^\epsilon)(v) \log (\mu_t*G^\epsilon)(v) dv \lesssim_{\epsilon,E} \iR (\mu_t*G^\epsilon)(v) \japangle{v}dv\lesssim_\epsilon 1+E.
	\]
	The first inequality comes from Lemma~\ref{lem:carlen} because $\log(\mu_t*G^\epsilon)$ has linear growth (uniform in time) while in the second inequality, one realises that $\mu_t*G^\epsilon$ has as many moments as $\mu_t$ with computable constants.
	
	\textitunder{Time regularization with $\delta>0$}
	\\
	Without loss of generality, let $\mu$ be the weakly time continuous representative (Lemma~\ref{lem:ctsrep}) and $M$ be the optimal grazing rate (Proposition~\ref{prop:metvel}) achieving the finite distance $d_L$.
	We first regularize the pair $(\mu,M)$ in time for a fixed parameter $\delta>0$ as follows. Take $\eta \in C_0^\infty(\mathbb{R})$ with the following properties
	\[
	\text{supp} \,\eta \subset (-1,1), \quad \eta \geq 0, \quad \eta(t) = \eta(-t), \quad \int_{-1}^1 \eta (t) dt = 1.
	\]
	We define the following measures for $t\in[0,T]$, by taking convex combinations
	\[
	\mu_t^\delta := 
	\int_{-1}^1 \eta(t')\mu_{t-\delta t'}dt'
	, \quad M_t^\delta := 
	\int_{-1}^1\eta (t')M_{t-\delta t'}dt'
	.
	\]
	
	Here, we constantly extend the measures in time. That is, if $t-\delta t' \in [-\delta,0]$, we treat $\mu_{t-\delta t' } = \mu_0, M_{t-\delta t' } = 0$. For the other end point, if $t-\delta t' \in [T,T+\delta]$, we set $\mu_{t-\delta t' } = \mu_T, M_{t-\delta t' } = 0$. This transformation is stable so that $(\mu^\delta, M^\delta) \in \mathcal{GCE}_T$ and in particular, the distributional grazing continuity equation holds
	\[
	\partial_t \mu_t^\delta + \frac{1}{2}\tilde{\nabla}\cdot M_t^\delta = 0.
	\]
	We derive equation~\eqref{eq:FTCentropy} using this regularized grazing continuity equation.
	Consider
	\[
	\mathcal{H}_\epsilon[\mu_t^\delta] = \int_{\mathbb{R}^d} (\mu_t^\delta * G^\epsilon)(v) \log (\mu_t^\delta*G^\epsilon)(v)dv,
	\]
	which we differentiate with respect to $t$ by appealing to the Dominated Convergence Theorem. Firstly, due to the time regularization, we have
	\[
	\partial_t\left\{
	(\mu_t^\delta*G^\epsilon)\log (\mu_t^\delta*G^\epsilon)
	\right\} = \left[
	(\partial_t\mu_t^\delta) * G^\epsilon
	\right](\log (\mu_t^\delta*G^\epsilon) + 1).
	\]
	The $L_v^1$ bound is obtained on the following difference quotient for a fixed time step $h>0$
	\begin{align*}
		&\quad \left|
		\frac{1}{h} [(\mu_{t+h}^\delta*G^\epsilon)\log(\mu_{t+h}^\delta*G^\epsilon) - (\mu_t^\delta*G^\epsilon)\log(\mu_t^\delta*G^\epsilon)]
		\right|	 	\\
		&\leq \frac{1}{h}\left|
		(\mu_{t+h}^\delta*G^\epsilon) - (\mu_t^\delta*G^\epsilon)
		\right|\sup_{s\in[t,t+h]}\left|
		\log(\mu_s^\delta*G^\epsilon) + 1
		\right|.
	\end{align*}
	where we have used the Mean Value theorem with the chain rule. Applying Lemma~\ref{lem:carlen}, we obtain
	\[\left|
	\frac{1}{h} [(\mu_{t+h}^\delta*G^\epsilon)\log(\mu_{t+h}^\delta*G^\epsilon) - (\mu_t^\delta*G^\epsilon)\log(\mu_t^\delta*G^\epsilon)]
	\right|
	\lesssim_{\epsilon,E}\frac{1}{h}\left|
	(\mu_{t+h}^\delta*G^\epsilon) - (\mu_t^\delta*G^\epsilon)
	\right|\japangle{v}.
	\]
	We apply the Mean Value Theorem on the difference quotient again to get
	\[\left|
	\frac{1}{h} [(\mu_{t+h}^\delta*G^\epsilon)\log(\mu_{t+h}^\delta*G^\epsilon) - (\mu_t^\delta*G^\epsilon)\log(\mu_t^\delta*G^\epsilon)]
	\right|
	\lesssim_{\delta,\epsilon} ||\eta'||_{L^\infty} \left(
	\mu_0*G^\epsilon + \int_0^T \mu_t*G^\epsilon dt
	\right)\japangle{v}.
	\]
	Since $\mu$ has finite second order moments, this last expression belongs to $L_v^1$. By the Dominated Convergence Theorem,
	\[
	\frac{d}{dt}\mathcal{H}_\epsilon[\mu_t^\delta] 	= \int_{\mathbb{R}^d}\left[
	(\partial_t\mu_t^\delta) * G^\epsilon
	\right](\log (\mu_t^\delta*G^\epsilon) + 1) dv 	
	= \int_{\mathbb{R}^d}(\partial_t\mu_t^\delta) \cdot [G^\epsilon*\log(\mu_t^\delta*G^\epsilon)] dv
	\]
	The last line is achieved by the self-adjointness of convolution with $G^\epsilon$ and eliminating the constant term due to the conserved mass of $\mu^\delta$. Integrating in $t$, we obtain
	\begin{align}
		\label{eq:FTCreg}
		\begin{split}
			\mathcal{H}_\epsilon [\mu_r^\delta] - \mathcal{H}_\epsilon[\mu_s^\delta] 	&= \int_s^r \int_{\mathbb{R}^d} (\partial_t\mu_t^\delta) \cdot [G^\epsilon*\log(\mu_t^\delta*G^\epsilon)] dv dt	\\
			&=\frac{1}{2}\int_s^r\iint_{\mathbb{R}^{2d}}[\tn
			G^\epsilon *\log (\mu_t^\delta*G^\epsilon)]\cdot dM_t^\delta dt 	\\
			&= \frac{1}{2}\int_s^r\iiR \tn \frac{\delta \mathcal{H}_\epsilon}{\delta \mu_t^\delta}\cdot dM_t^\delta dt.
		\end{split}
	\end{align}  
	We now turn to establishing estimates independent of $\delta>0$ to pass to the limit.
	
	\textitunder{Estimates on the right-hand side of~\eqref{eq:FTCreg}:}
	\\
	According to Lemma~\ref{lem:esttnfirstvar}, we have the estimate
	\[
	\left|
	\tn \frac{\delta \mathcal{H}_\epsilon}{\delta \mu^\delta}
	\right| \lesssim_{\epsilon,E} |v|^p+|v_*|^p,
	\]
	where $p \leq 1$. By the first moment assumption of $M_t$, we have
	\begin{align*}
		&\quad \int_0^T\iiR\left|\tn \frac{\delta \mathcal{H}_\epsilon}{\delta \mu_t^\delta}\right| d|M_t|(v,v_*)dt \lesssim_{\epsilon,E} \int_0^T \iiR |v| + |v_*| d|M_t|(v,v_*)dt < \infty.
	\end{align*}
	This estimate also extends to $M_t^\delta$
	\[
	\int_0^T \iiR \left|
	\tn\firstvar{\mathcal{H}_\epsilon}{\mu_t^\delta}
	\right|d|M_t^\delta|(v,v_*)dt < \infty.
	\]
	Note that these estimates are independent of $\delta>0$.
	
	\textitunder{Convergence $\delta\to 0$:}
	\\
	Firstly, we establish the following identity which will be useful later. For fixed functions $f^1, \, f^2$ we have
\begin{align}
\label{eq:expandtndiff}
\begin{split}
&\quad \tn [G^\epsilon * f^1] - \tn [G^\epsilon * f^2] 	\\
&= |v-v_*|^{1+\frac{\gamma}{2}}\Pi[v-v_*](\nabla [G^\epsilon * f^1] - \nabla [G^\epsilon * f^2] - (\nabla_*[G^\epsilon * f^1]_* - \nabla_*[G^\epsilon * f^2]_*)) 	\\
&= |v-v_*|^{1+\frac{\gamma}{2}}\Pi[v-v_*]\int_{\R^d}(\nabla G^\epsilon(v - v') - \nabla G^\epsilon(v_* - v'))(f^1(v') - f^2(v'))dv'.
\end{split}
\end{align}	
	Using the weak in time continuity of $\mu$, we can consider
	\[
	|\mu_t^\delta * G^\epsilon(v') - \mu_t*G^\epsilon(v')|
	\leq \int_{-1}^1 \eta(t') |\langle\mu_{t-\delta t'}, G^\epsilon (v' - \cdot )\rangle - \langle\mu_{t}, G^\epsilon (v' - \cdot )\rangle|dt'.
	\]
	The $\cdot$ stands for the convoluted variable. Since $t$ belongs to a compact set, the function $t\mapsto \langle \mu_t, G^\epsilon (v'-\cdot)\rangle $ is \textit{uniformly} continuous from the weak continuity of $\mu$. In particular, using the continuity in $v'$ and the lower bound from Lemma~\ref{lem:carlen} we conclude that for any $R>0$
	\begin{equation}\label{eq:logconvergence}
		|\log(\mu_t^\delta * G^\epsilon)-\log(\mu_t * G^\epsilon)|\to 0\qquad\mbox{uniformly on $B_R$.}
	\end{equation}
	Therefore by Lemma~\ref{lem:carlen}, denoting $w = |v-v_*|^{1+\frac{\gamma}{2}}$, and using~\eqref{eq:expandtndiff} with $f^1 = \log (\mu_t^\delta * G^\epsilon)$ and $f^2 = \log (\mu_t *G^\epsilon)$, we have
	$$
	\begin{array}{l}
		\displaystyle\left|
		\tn \firstvar{\mathcal{H}_\epsilon}{\mu_t^\delta} - \tn \firstvar{\mathcal{H}_\epsilon}{\mu_t}
		\right|=|\tn G^{\epsilon}*\log (\mu_t^\delta * G^{\epsilon})(v,v_*)
		-
		\tn G^{\epsilon}*\log (\mu_t * G^{\epsilon})(v,v_*)|\\ 
		\displaystyle\le \int_{\R^d}w|\nabla G^{\epsilon}(v-v')-\nabla G^{\epsilon}(v_*-v')||\log (\mu_t^\delta * G^{\epsilon}(v'))-\log (\mu_t * G^{\epsilon}(v'))|\;dv'\\
		\displaystyle \le \int_{B_{R_0}^c}w|\nabla G^{\epsilon}(v-v')-\nabla G^{\epsilon}(v_*-v')|C_\epsilon\japangle{v'}\;dv'
		\\
		\displaystyle \qquad +\sup_{ B_{R_0}}|\log(\mu_t^\delta * G^{\epsilon})-\log(\mu_t * G^{\epsilon})|\int_{B_{R_0}} w |\nabla G^{\epsilon}(v-v')-\nabla G^{\epsilon}(v_*-v')|\;dv'.
	\end{array}
	$$
	For a fixed $(v,v_*)$, we obtain the convergence to zero by taking $\delta\to 0$ and $R_0\to \infty$ in the previous estimate. This holds for all $\gamma\in[-4,0]$ by taking advantage of the regularity of $G^\epsilon$. Using continuity, we obtain that for any $R>0$
	\begin{equation}\label{eq:uniformconvergence}
		\left|\tn \firstvar{\mathcal{H}_\epsilon}{\mu_t^\delta}(v,v_*)-\tn \firstvar{\mathcal{H}_\epsilon}{\mu_t}(v,v_*)\right|\to 0\qquad\mbox{uniformly on $[0,T]\times B_R\times B_R$}.
	\end{equation}
	We turn to the limit estimate for the right hand side of~\eqref{eq:FTCreg}. For any $R>0$, we have
	\begin{align*}
		&\quad \left|
		\int_s^r \iiR \tn \firstvar{\mathcal{H}_\epsilon}{\mu_t^\delta}\cdot dM_t^\delta dt - \int_s^r \iiR \tn \firstvar{\mathcal{H}_\epsilon}{\mu_t}\cdot dM_t dt
		\right| 	\\
		&\leq \left|
		\int_s^r \iiR \left(
		\tn \firstvar{\mathcal{H}_\epsilon}{\mu_t^\delta} - \tn \firstvar{\mathcal{H}_\epsilon}{\mu_t}
		\right)\cdot dM_t^\delta dt
		\right| + \left|
		\int_s^r\iiR \tn \firstvar{\mathcal{H}_\epsilon}{\mu_t}\cdot dM_t^\delta dt - \int_s^r \iiR \tn \firstvar{\mathcal{H}_\epsilon}{\mu_t}\cdot dM_t dt
		\right| 	\\
		&\leq \int_s^r \iint_{B_R \times B_R} \left|
		\tn \firstvar{\mathcal{H}_\epsilon}{\mu_t^\delta} - \tn \firstvar{\mathcal{H}_\epsilon}{\mu_t}
		\right|d|M_t^\delta| dt + \int_s^r \iint_{(B_R \times B_R)^C} \left|
		\tn \firstvar{\mathcal{H}_\epsilon}{\mu_t^\delta} - \tn \firstvar{\mathcal{H}_\epsilon}{\mu_t}
		\right|d|M_t^\delta| dt  + o(1).
	\end{align*}
	The last term is $o(1)$ as $\delta\to 0$ due to similar estimates from the previous step. By sending $\delta\to 0$ (the first term vanishes due to~\eqref{eq:uniformconvergence}) and then sending $R\to \infty$ (the second term vanishes again due to the estimate from the previous step), we obtain the convergence
	\begin{equation}\label{eq:convrhs}
		\lim_{\delta\to 0} \frac{1}{2}\int_s^r \iiR \tn \firstvar{\mathcal{H}_\epsilon}{\mu_t^\delta}\cdot dM_t^\delta dt = \frac{1}{2}\int_s^r \iiR \tn \firstvar{\mathcal{H}_\epsilon}{\mu_t}\cdot dM_t^\delta dt.
	\end{equation}

	\textitunder{Convergence of the left-hand side of~\eqref{eq:FTCreg}}
	\\
	By \eqref{eq:logconvergence}, Lemma~\ref{lem:carlen} and the uniform bound on the second moment, we have that
	\begin{align*}
		|\mathcal{H}_{\epsilon} [\mu_t^\delta] - \mathcal{H}_{\epsilon}[\mu_t]| 	&\leq \int_{\mathbb{R}^d} | (\mu_t^\delta*G^{\epsilon})\log (\mu_t^\delta *G^{\epsilon})(v) - (\mu_t*G^{\epsilon})\log (\mu_t*G^{\epsilon})(v)| dv 	\\
		&\to 0, \quad \text{as }\delta \to 0.
	\end{align*}
	Therefore, by the previous equation and \eqref{eq:convrhs} we can take $\delta \to 0$ in \eqref{eq:FTCreg} to obtain
	\[
	\mathcal{H}_{\epsilon} [\mu_r] - \mathcal{H}_{\epsilon}[\mu_s] = \frac{1}{2}\int_s^r\iint_{\mathbb{R}^{2d}} \tn\firstvar{\mathcal{H}_\epsilon}{\mu_t} \cdot dM_t(v,v_*) dt,
	\]
	which is the desired result.
\end{proof}
\section{JKO scheme for $\epsilon$-Landau equation}
\label{sec:JKO}
This section is devoted to the proof of Theorem~\ref{thm:existcms} after a series of preliminary lemmata. Our construction of curves of maximal slope in Theorem~\ref{thm:existcms} uses the basic minimizing movement/variational approximation scheme of Jordan et al.~\cite{jordan_variational_1998}. Fix a small time step $\tau > 0$ and initial datum $\mu_0\in\mathscr{P}_{2,E}(\Rd)$ and consider the recursive minimization procedure for $n\in\mathbb{N}$
\begin{equation}
	\label{eq:JKO1}
	\nu_0^\tau := \mu_0, \qquad \nu_n^\tau \in \mbox{argmin}_{\lambda\in\mathscr{P}_{2,E}}\left[
	\mathcal{H}_{\epsilon}(\lambda) + \frac{1}{2\tau}d_L^2(\nu_{n-1}^\tau,\lambda)
	\right].
\end{equation}
Then, we concatenate these minimizers into a curve by setting
\begin{equation}
	\label{eq:JKO2}
	\mu_0^\tau:= \mu_0, \qquad \mu_t^\tau := \nu_n^\tau, \, \mbox{ for }t\in((n-1)\tau,n\tau]. 
\end{equation}
The scheme given by~\eqref{eq:JKO1} and~\eqref{eq:JKO2} satisfies the abstract formulation in~\cite{ambrosio_gradient_2008-1} giving
\begin{proposition}
	[Landau JKO scheme]
	\label{prop:LJKO}
	For any $\tau>0$ and $\mu_0\in\mathscr{P}_{2,E}(\Rd)$, there exists $\nu_n^\tau\in\PtwoE$ for every $n\in\mathbb{N}$ as described in~\eqref{eq:JKO1}. Furthermore, up to a subsequence of $\mu_t^\tau$ described in~\eqref{eq:JKO2} as $\tau\to0$, there exists a locally absolutely continuous curve $(\mu_t)_{t\geq 0}$ such that
	\[
	\mu_t^\tau \rightharpoonup \mu_t, \qquad \forall t\in[0,\infty).
	\]
\end{proposition}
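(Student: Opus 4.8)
The plan is to check that our data fit the abstract minimizing-movement machinery of Ambrosio--Gigli--Savar\'e~\cite[Chapters 2--3]{ambrosio_gradient_2008-1}, with metric space $(\mathscr{P}_{2,E}(\Rd),d_L)$ (restricted to the finite $d_L$-distance component of $\mu_0$), functional $\mathcal{H}_\epsilon$, and auxiliary topology given by weak convergence of measures; everything Landau-specific is already contained in Theorem~\ref{them:existdL} together with Lemma~\ref{lem:carlen}. First I would prove solvability of the scheme~\eqref{eq:JKO1} by the direct method. Fix $\tau>0$ and $\nu_{n-1}^\tau\in\mathscr{P}_{2,E}(\Rd)$, and set $\Phi(\lambda):=\mathcal{H}_\epsilon(\lambda)+\frac{1}{2\tau}d_L^2(\nu_{n-1}^\tau,\lambda)$. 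The competitor $\lambda=\nu_{n-1}^\tau$ shows $\inf\Phi<\infty$; Lemma~\ref{lem:carlen} gives $|\mathcal{H}_\epsilon(\lambda)|\lesssim_{\epsilon,E}1+E$ on $\mathscr{P}_{2,E}$, so $\Phi$ is bounded below. A minimizing sequence $(\lambda_k)$ therefore has $d_L^2(\nu_{n-1}^\tau,\lambda_k)$ uniformly bounded, hence is weakly precompact by Theorem~\ref{them:existdL}; any weak limit $\lambda_\infty$ lies in $\mathscr{P}_{2,E}$ by weak lower semicontinuity of $m_2$, and since $\mathcal{H}_\epsilon$ is weakly lower semicontinuous on $\mathscr{P}_{2,E}$ (the mollified densities converge pointwise with a uniform $\japangle{v/\epsilon}$-dominating control from Lemma~\ref{lem:carlen}) and $d_L$ is weakly lower semicontinuous (Theorem~\ref{them:existdL}), $\Phi(\lambda_\infty)\le\liminf_k\Phi(\lambda_k)$, so $\lambda_\infty$ is the sought minimizer $\nu_n^\tau$. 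Induction on $n$ gives the full discrete sequence.

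Next I would extract the a priori estimate. Minimality of $\nu_n^\tau$ with competitor $\nu_{n-1}^\tau$ yields $\mathcal{H}_\epsilon(\nu_n^\tau)+\frac{1}{2\tau}d_L^2(\nu_{n-1}^\tau,\nu_n^\tau)\le\mathcal{H}_\epsilon(\nu_{n-1}^\tau)$; summing the telescoping inequalities over $n=1,\dots,N$ gives
\[
\mathcal{H}_\epsilon(\nu_N^\tau)+\frac{1}{2\tau}\sum_{n=1}^N d_L^2(\nu_{n-1}^\tau,\nu_n^\tau)\le\mathcal{H}_\epsilon(\mu_0),
\]
and since $\mathcal{H}_\epsilon\ge -C(\epsilon,E)$ on $\mathscr{P}_{2,E}$ we obtain $\sum_{n\ge1}d_L^2(\nu_{n-1}^\tau,\nu_n^\tau)\le 2\tau\big(\mathcal{H}_\epsilon(\mu_0)+C\big)$, uniformly in $\tau$. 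By the triangle inequality and Cauchy--Schwarz, for $0\le s\le t$,
\[
d_L(\mu_s^\tau,\mu_t^\tau)\le\sum_{n\in I_{s,t}}d_L(\nu_{n-1}^\tau,\nu_n^\tau)\le|I_{s,t}|^{1/2}\Big(\sum_{n\in I_{s,t}}d_L^2(\nu_{n-1}^\tau,\nu_n^\tau)\Big)^{1/2}\lesssim\big(\mathcal{H}_\epsilon(\mu_0)+C\big)^{1/2}\big(|t-s|+\tau\big)^{1/2},
\]
where $I_{s,t}$ is the set of indices $n$ with $((n-1)\tau,n\tau]\cap[s,t]\ne\emptyset$, so that $|I_{s,t}|\le (t-s)/\tau+2$. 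In particular the piecewise-constant interpolants $(\mu_t^\tau)$ are approximately $\tfrac12$-H\"older in $d_L$, uniformly in $\tau$, and all iterates remain in the $d_L$-ball $\{\lambda:d_L(\mu_0,\lambda)^2\le C'\}$, which is weakly compact.

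Finally I would pass to the limit. The generalized Arzel\`a--Ascoli theorem for curves valued in a metric space equipped with a weaker compact topology, \cite[Proposition 3.3.1]{ambrosio_gradient_2008-1}, applies with exactly the inputs now in hand: weak compactness of $d_L$-bounded sets, weak lower semicontinuity of $d_L$, and the approximate equicontinuity above. A diagonal argument over a countable dense set of times, upgraded to all $t\ge0$ by the equicontinuity estimate, then produces a subsequence $\tau_j\to0$ and a limit curve $(\mu_t)_{t\ge0}$, locally absolutely continuous with respect to $d_L$, such that $\mu_t^{\tau_j}\rightharpoonup\mu_t$ for every $t\in[0,\infty)$, which is the claim.

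The main obstacle, relative to the classical Wasserstein JKO scheme, is that $d_L$ is only a pseudo-metric that may equal $+\infty$ between arbitrary measures, and that the natural compact topology is the strictly weaker weak topology rather than $d_L$ itself. Both issues are resolved by Theorem~\ref{them:existdL}: using $\nu_{n-1}^\tau$ as a competitor keeps every iterate within a controlled $d_L$-distance of $\mu_0$, hence inside the complete geodesic space on which $d_L$ is finite and well-behaved, while the facts that $d_L$-convergence implies weak convergence and that $d_L$ is weakly lower semicontinuous make the weak topology a legitimate auxiliary topology for the AGS scheme. A minor additional point is the stability of the hard second-moment constraint $m_2(\lambda)\le E$ along the minimization, which is immediate from weak lower semicontinuity of $m_2$.
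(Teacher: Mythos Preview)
Your proposal is correct and follows essentially the same approach as the paper: both work in the component $\mathscr{P}_{\mu_0}$ of finite $d_L$-distance with the weak topology as auxiliary $\sigma$-topology, verify the AGS hypotheses (lower boundedness and weak lsc of $\mathcal{H}_\epsilon$ via Lemma~\ref{lem:carlen}, and weak compactness of $d_L$-bounded sets via Theorem~\ref{them:existdL}), and then invoke the abstract minimizing-movement machinery of~\cite{ambrosio_gradient_2008-1}. The only difference is cosmetic: the paper simply cites~\cite[Corollary 2.2.2, Proposition 2.2.3]{ambrosio_gradient_2008-1} for existence of minimizers and of the limit curve, whereas you unpack those references by writing out the telescoping estimate and the Arzel\`a--Ascoli argument explicitly.
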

\begin{proof}
	Our metric setting is $(\mathscr{P}_{\mu_0},d_L)$ (see Theorem~\ref{them:existdL}) with the weak topology $\sigma$. This space is essentially $\PtwoE$ except we need to make sure that $d_L$ is a proper metric, hence we remove the probability measures with infinite Landau distance. We follow the proof of Erbar~\cite{erbar_gradient_2016} which consists in verifying~\cite[Assumptions 2.1 a,b,c]{ambrosio_gradient_2008-1}. These assumptions are listed and verified now.
	\begin{enumerate}
		\item \textbf{$\mathcal{H}_{\epsilon}$ is sequentially $\sigma$-lsc on $d_L$-bounded sets:}  Suppose $\mu_n\in\PtwoE \rightharpoonup \mu \in \PtwoE$, this implies $\mu_n*G^{\epsilon} \rightharpoonup \mu*G^{\epsilon}$ in $\mathscr{P}_2(\Rd)$. It is known that 
		\[
		\mathcal{H}(\mu) = \left\{
		\begin{array}{ll}
			\iR f(v) \log f(v) dv, 	&\mu = f \mathcal{L} 	\\
			+\infty, 	&\text{else}
		\end{array}
		\right.
		\]
		is $\sigma$-lsc and since $\mathcal{H}_{\epsilon}(\mu) = \mathcal{H}(\mu*G^{\epsilon})$, we achieve the first property.
		\item \textbf{$\mathcal{H}_{\epsilon}$ is lower bounded:} By Carlen-Carvalho Lemma~\ref{lem:carlen} for fixed $\epsilon>0$, $\log (\mu*G^{\epsilon})$ is uniformly lower bounded by a linearly growing term. For fixed $\mu\in\PtwoE$, we have, with Cauchy-Schwarz
		\[
		\mathcal{H}_{\epsilon}(\mu) \gtrsim_\epsilon -\iR \langle v \rangle \mu*G^{\epsilon}(v) dv \geq -\left(
		\iR \langle v \rangle^2 \mu*G^{\epsilon}(v) dv
		\right)^\frac{1}{2} \geq -(\mathcal{O}(\epsilon)+E)^\frac{1}{2} > -\infty.
		\]
		\item \textbf{$d_L$-bounded sets are relatively sequentially $\sigma$-compact:} This is one of the consequences from Theorem~\ref{them:existdL}.
	\end{enumerate}
	The existence of minimizers, $\nu_n^\tau$, to~\eqref{eq:JKO1} and limits, $\mu_t$, to~\eqref{eq:JKO2} is guaranteed from~\cite[Corollary 2.2.2]{ambrosio_gradient_2008-1} and~\cite[Proposition 2.2.3]{ambrosio_gradient_2008-1}, respectively.
\end{proof}
At the abstract level, the limit curve constructed in Proposition~\ref{prop:LJKO} has no relation to $\sqrt{D_\epsilon}$. The following lemmata bridge this gap. 
\begin{lemma}
	\label{lem:Depsslope}
	For any $\mu_0\in \mathscr{P}_2(\Rd)$, we have
	\[
	\sqrt{D_\epsilon(\mu_0)} \leq |\partial^-\mathcal{H}_{\epsilon}|(\mu_0).
	\]
\end{lemma}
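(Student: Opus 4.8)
The plan is to establish the inequality first for the \emph{local} slope $|\partial\mathcal{H}_\epsilon|$ at an arbitrary point and then transfer it to the relaxed slope by lower semicontinuity of the dissipation. Fix $\mu_0\in\mathscr{P}_2(\Rd)$ and let $(\mu_n)_n$ be any sequence admissible in the definition of $|\partial^-\mathcal{H}_\epsilon|(\mu_0)$, i.e. $\mu_n\to\mu_0$ with $\sup_n\big(d_L(\mu_n,\mu_0),\mathcal{H}_\epsilon(\mu_n)\big)<\infty$; then $\mu_n\rightharpoonup\mu_0$ weakly and, since $d_L$ is a metric on a fixed $\mathscr{P}_{2,E}(\Rd)$, the $\mu_n$ have uniformly bounded second moments. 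It thus suffices to prove (A) $\sqrt{D_\epsilon(\mu)}\le|\partial\mathcal{H}_\epsilon|(\mu)$ for every $\mu\in\mathscr{P}_2(\Rd)$, and (B) $D_\epsilon$ is weakly sequentially lower semicontinuous on sets of uniformly bounded second moment: indeed (A) and (B) give $\liminf_n|\partial\mathcal{H}_\epsilon|(\mu_n)\ge\liminf_n\sqrt{D_\epsilon(\mu_n)}\ge\sqrt{D_\epsilon(\mu_0)}$, and the infimum over admissible sequences proves the Lemma. (Both $\mathcal{H}_\epsilon$ and $D_\epsilon$ are finite on $\mathscr{P}_2(\Rd)$ by Lemmas~\ref{lem:carlen} and~\ref{lem:esttnfirstvar}, so $\mu_n\equiv\mu_0$ is admissible and the infimum is over a nonempty set.)

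\emph{Proof of (B).} If $\mu_n\rightharpoonup\mu_0$ weakly with $\sup_n m_2(\mu_n)\le E$ then $\mu_n\otimes\mu_n\rightharpoonup\mu_0\otimes\mu_0$ weakly, while the estimates already used in the proof of Proposition~\ref{prop:strongupgradeps} --- Carlen--Carvalho (Lemma~\ref{lem:carlen}), continuity of convolution against $G^\epsilon$, and the derivative bounds of Lemma~\ref{lem:estlogdiff} --- give $\tn\firstvar{\mathcal{H}_\epsilon}{\mu_n}\to\tn\firstvar{\mathcal{H}_\epsilon}{\mu_0}$ uniformly on compact subsets of $\Rtwod$. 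For a continuous cut-off $\chi_R$ equal to $1$ on $B_R\times B_R$ and supported in $B_{2R}\times B_{2R}$, the functions $\chi_R\,|\tn\firstvar{\mathcal{H}_\epsilon}{\mu_n}|^2$ are bounded and converge uniformly, so
\[
\iiR \chi_R\Big|\tn\firstvar{\mathcal{H}_\epsilon}{\mu_0}\Big|^2 d\mu_0\otimes\mu_0=\lim_{n\to\infty}\iiR \chi_R\Big|\tn\firstvar{\mathcal{H}_\epsilon}{\mu_n}\Big|^2 d\mu_n\otimes\mu_n\le\liminf_{n\to\infty}2D_\epsilon(\mu_n);
\]
letting $R\uparrow\infty$ by monotone convergence yields $2D_\epsilon(\mu_0)\le\liminf_n 2D_\epsilon(\mu_n)$.

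\emph{Proof of (A).} The point is to produce, for the given $\mu_0$, a curve along which $\mathcal{H}_\epsilon$ decreases at the gradient-flow rate: a short-time solution $(\mu_t,M_t)\in\mathcal{GCE}_{T_0}^{2,E}$ of the $\mathcal{H}_\epsilon$-Landau equation with $\mu_{t=0}=\mu_0$, $M_t=-\mu_t\otimes\mu_t\,\tn\firstvar{\mathcal{H}_\epsilon}{\mu_t}$ and $\int_0^{T_0}\mathcal{A}(\mu_t,M_t)\,dt<\infty$. Given such a curve, directly from the definition of $\mathcal{A}$ (as in Lemma~\ref{lem:actfun}) one has $\mathcal{A}(\mu_t,M_t)=D_\epsilon(\mu_t)$, the chain rule of Proposition~\ref{prop:strongupgradeps} gives $\mathcal{H}_\epsilon(\mu_0)-\mathcal{H}_\epsilon(\mu_t)=\int_0^t D_\epsilon(\mu_s)\,ds\ge0$, and Lemma~\ref{lem:d_Lchar} together with Cauchy--Schwarz gives $d_L(\mu_0,\mu_t)\le\int_0^t\sqrt{D_\epsilon(\mu_s)}\,ds\le\sqrt{t}\,\big(\int_0^t D_\epsilon(\mu_s)\,ds\big)^{1/2}\to0$. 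Assuming $D_\epsilon(\mu_0)>0$ (the bound being trivial otherwise),
\[
|\partial\mathcal{H}_\epsilon|(\mu_0)\ \ge\ \limsup_{t\downarrow0}\frac{\big(\mathcal{H}_\epsilon(\mu_0)-\mathcal{H}_\epsilon(\mu_t)\big)^+}{d_L(\mu_0,\mu_t)}\ \ge\ \limsup_{t\downarrow0}\Big(\tfrac1t\int_0^t D_\epsilon(\mu_s)\,ds\Big)^{1/2}\ \ge\ \sqrt{D_\epsilon(\mu_0)},
\]
the last step because $s\mapsto\mu_s$ is weakly continuous with value $\mu_0$ at $s=0$ and $D_\epsilon$ is weakly lower semicontinuous by (B).

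\emph{The main obstacle} is precisely this short-time construction. For $\gamma\in[-2,0]$ the weight $|v-v_*|^{2+\gamma}$ is bounded on bounded sets and the equation reads $\partial_t\mu_t+\nabla\cdot(b_t\mu_t)=0$ with
\[
b_t(v)=-\iR|v-v_*|^{2+\gamma}\Pi[v-v_*]\big(\nabla\varphi_t(v)-\nabla\varphi_t(v_*)\big)d\mu_t(v_*),\qquad \varphi_t=G^\epsilon*\log(\mu_t*G^\epsilon);
\]
by the uniform bounds of Lemma~\ref{lem:estlogdiff} on $\nabla\varphi_t$ and $D^2\varphi_t$ the field $b_t$ is locally Lipschitz in $v$ with at most quadratic growth, which is more than enough to run a Picard fixed point (freezing the law inside $b_t$) combined with the method of characteristics, and $\mu_t$ stays in $\mathscr{P}_{2,E}(\Rd)$ because $\tn|v|^2\equiv0$ forces conservation of energy. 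For $\gamma\in[-4,-2)$ the weight is singular and $\mu_0$ carries no regularity, so $b_t$ need not even be finite; one then regularizes the weight (e.g. replacing $|v-v_*|^{2+\gamma}$ by $(|v-v_*|^2+\delta)^{1+\gamma/2}$), solves the regularized flow as above, extracts a limit $(\mu_t,M_t)\in\mathcal{GCE}_{T_0}^{2,E}$ using the bounded-action compactness of Proposition~\ref{prop:cpctbddact}, and then identifies $M_t=-\mu_t\otimes\mu_t\,\tn\firstvar{\mathcal{H}_\epsilon}{\mu_t}$ together with its chain rule by the same uniform-on-compacts convergence of $\tn\firstvar{\mathcal{H}_\epsilon}{\cdot}$ used in (B) and the bound $|\tn\firstvar{\mathcal{H}_\epsilon}{\mu}|\lesssim_\epsilon 1$ of Lemma~\ref{lem:esttnfirstvar}. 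Carrying this degenerate construction through uniformly in $\gamma\in[-4,0]$ is where the real work lies.
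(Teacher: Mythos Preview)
Your overall strategy is sound, and the decomposition into (A) a local-slope bound plus (B) weak lower semicontinuity of $D_\epsilon$ is a clean way to reach the \emph{relaxed} slope; your argument for (B) is correct and in fact makes explicit a step the paper leaves somewhat implicit. The difference from the paper lies entirely in how the test curve for (A) is produced.

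You propose to solve the full $\epsilon$-Landau equation for a short time and you correctly identify this as the hard part, especially for very soft potentials $\gamma\in[-4,-2)$ where the weight is singular. The paper sidesteps exactly this difficulty. Instead of the genuine $\epsilon$-flow it solves a tame auxiliary PDE in which (i) the drift is cut off in $v$ and $v_*$ by smooth $\phi_{R_1}$, (ii) the weight is cut off near the diagonal by $\psi_{R_2}$, and (iii) the gradient of the first variation is \emph{frozen} at the initial time, i.e.\ $J_t^\epsilon$ is replaced by $J_0^\epsilon=\nabla G^\epsilon*\log(\mu_0*G^\epsilon)$. With these truncations the velocity field is $C_c^\infty$ uniformly in time, so a standard $W_2$ fixed-point/characteristics argument (Appendix~A) gives short-time solutions for \emph{all} $\gamma$ at once. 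Along this curve the instantaneous entropy decrease at $t=0$ is the truncated dissipation $D_\epsilon^{R_1,R_2}(\mu_0)$, the $d_L$-speed is bounded by the same quantity (since $\phi_{R_1}^2\le\phi_{R_1}$, $\psi_{R_2}^2\le\psi_{R_2}$), and one obtains $\sqrt{D_\epsilon^{R_1,R_2}(\mu_0)}\le|\partial^-\mathcal{H}_\epsilon|(\mu_0)$; finally $R_1,R_2\uparrow\infty$ recovers the full $D_\epsilon(\mu_0)$ by monotone convergence.

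So the paper's route buys a uniform-in-$\gamma$ argument at the price of an extra truncation layer and a limit at the end, whereas your route is conceptually more direct but leaves the short-time existence of the $\epsilon$-Landau flow as an unfinished subproblem --- precisely the ``real work'' you flagged in your last sentence. If you want to complete your version, the cleanest fix is simply to adopt the paper's cut-off auxiliary problem as your test curve; your steps (A) and (B) then go through verbatim with $D_\epsilon^{R_1,R_2}$ in place of $D_\epsilon$, and the final monotone-convergence step closes the argument.
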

\begin{proof}
	For fixed $\epsilon,R_1,R_2>0$ and $\gamma\in\R$, take $T>0$ from Theorem~\ref{thm:auxpde} in Appendix~\ref{sec:auxpde} and the unique weak solution $\mu \in C([0,T];\mathscr{P}_2(\Rd))$ to
	\begin{equation*}
		\left\{
		\begin{array}{rcl}
			\partial_t \mu 	&= 	&\nabla\cdot \{
			\mu\phi_{R_1} \iR  \phi_{R_1*} \psi_{R_2}(v-v_*)|v-v_*|^{\gamma+2} \Pi[v-v_*](J_0^\epsilon - J_{0*}^\epsilon)d\mu(v_*)
			\} 	\\
			\mu(0) 	&= 	&\mu_0
		\end{array}
		\right..
	\end{equation*}
	The functions  $0 \leq \phi_{R_1}, \psi_{R_2} \leq 1$ are smooth cut-off functions with the following properties
	\[
	\phi_{R_1}(v) = \left\{
	\begin{array}{cl}
		1, 	&|v| \leq R_1 	\\
		0, 	&|v| \geq R_1+1
	\end{array}
	\right., 	\quad
	\psi_{R_2}(z) = \left\{
	\begin{array}{cl}
		0, 	&|z| \leq 1/R_2 	\\
		1, 	&|z| \geq 2/R_2
	\end{array}
	\right..
	\]
	The notation $J_0^\epsilon$ from~\Cref{sec:auxpde} means
	\[
	J_0^\epsilon = \nabla G^{\epsilon} * \log [\mu_0 * G^{\epsilon}]\in C^\infty(\Rd;\Rd).
	\]
	For this proof alone, we define the reduced $\epsilon$-entropy-dissipation
	\[
	D_{\epsilon}^{R_1,R_2}(\mu_0) := \frac{1}{2}\iiR \phi_{R_1}\phi_{R_1*} \psi_{R_2}(v-v_*)|v-v_*|^{\gamma+2}\left|
	\Pi[v-v_*](J_0^\epsilon - J_{0*}^\epsilon)
	\right|^2d\mu_0(v)d\mu_0(v_*).
	\]
	On the other hand, as the $\epsilon$-entropy dissipation comes from the negative time derivative of entropy, we have
	\begin{align*}
		&\quad D_{\epsilon}^{R_1,R_2}(\mu_0) = \lim_{t\downarrow 0} \frac{\mathcal{H}_{\epsilon}(\mu_0) - \mathcal{H}_{\epsilon}(\mu_t)}{t} = \lim_{t\downarrow 0} \frac{\mathcal{H}_{\epsilon}(\mu_0) - \mathcal{H}_{\epsilon}(\mu_t)}{d_L(\mu_0,\mu_t)}\frac{d_L(\mu_0,\mu_t)}{t} 	\\
		&\leq \lim_{t\downarrow 0}\left\{ \frac{\mathcal{H}_{\epsilon}(\mu_0) - \mathcal{H}_{\epsilon}(\mu_t)}{d_L(\mu_0,\mu_t)} \right. \times \frac{1}{t}	\\
		&\quad \left.\times \left(
		\int_0^t\sqrt{\frac{1}{2}\iiR \phi_{R_1}^2\phi_{R_1*}^2\psi_{R_2}^2|v-v_*|^{\gamma+2}|\Pi[v-v_*](J_0^\epsilon - J_{0*}^\epsilon)|^2}d\mu_s(v)d\mu_s(v_*)ds
		\right)\right\} 	\\
		&\leq |\partial\mathcal{H}_{\epsilon}|(\mu_0)\sqrt{D_\epsilon^{R_1,R_2}(\mu_0)}.
	\end{align*}
In the first inequality, we estimated $d_L(\mu_0,\mu_t)$ by considering the PDE in this lemma as the grazing collision equation with $M = -(\mu\otimes \mu) \tn \log \mu_0$.
In the last inequality, we have used the Lebesgue differentiation theorem with strong-weak convergence since $\mu$ is continuous in time as well as the fact that $\phi_{R_1}^2 \leq \phi_{R_1}$ and $\psi_{R_2}^2 \leq \psi_{R_2}$ since $0 \leq \phi_{R_1},\psi_{R_2}\leq 1$. We are left with the inequality
\[
\sqrt{D_\epsilon^{R_1,R_2}(\mu_0)} \leq |\partial\mathcal{H}_{\epsilon}|(\mu_0), \quad \forall R_1,R_2>0.
\]
Owing to the many regularisations applied, the $\epsilon$-entropy-dissipation $\mu \mapsto D_\epsilon^{R_1, \, R_2}(\mu)$ is continuous with respect to weak convergence of probability measures. By considering weakly convergent sequences and passing to the limit inferior, we deduce the same inequality with the relaxed slope
\[
\sqrt{D_\epsilon^{R_1,R_2}(\mu_0)} \leq |\partial^-\mathcal{H}_{\epsilon}|(\mu_0), \quad \forall R_1,R_2>0.
\]
	As functions of $R_1,R_2$ individually, $D_\epsilon^{R_1,R_2}(\mu_0)$ is non-decreasing. Furthermore, the integrand of $D_\epsilon^{R_1,R_2}(\mu_0)$ converges to the integrand of $D_\epsilon(\mu_0)$ pointwise $\mu_0$-almost every $v,v_*$. Thus, an application of the monotone convergence theorem in the limit $R_1,R_2\to \infty$ on the above inequality completes the proof.
\end{proof}
\begin{lemma}
	\label{lem:slopeissug}
	$|\partial^-\mathcal{H}_{\epsilon}|$ is a strong upper gradient for $\mathcal{H}_{\epsilon}$ in $\mathscr{P}_{\mu_0}(\Rd)$ where $\mu_0 \in \PtwoE$.
\end{lemma}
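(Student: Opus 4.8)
The plan is to deduce the statement from two facts already at hand: that $\sqrt{D_\epsilon}$ is a strong upper gradient for $\mathcal{H}_\epsilon$ (Remark~\ref{rk:stronguppergradient}, which rests on the chain rule of Proposition~\ref{prop:strongupgradeps}) and that $\sqrt{D_\epsilon}$ lies below the relaxed slope (Lemma~\ref{lem:Depsslope}). Indeed, any Borel function that dominates a strong upper gradient along absolutely continuous curves is itself a strong upper gradient, so the whole matter reduces to measurability of $|\partial^-\mathcal{H}_\epsilon|$ along such curves.

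Fix an interval $(a,b)$, a $d_L$-absolutely continuous curve $\mu:(a,b)\to\mathscr{P}_{\mu_0}(\Rd)$, and a compact subinterval $[s,r]\subset(a,b)$; after a time shift we view $\mu|_{[s,r]}$ as a curve on $[0,r-s]$. By Proposition~\ref{prop:metvel} there is an optimal grazing rate $(\tilde M_t)$ with $(\mu,\tilde M)\in\mathcal{GCE}_{r-s}^{2,E}$ and $|\dot\mu|^2(t)=\mathcal{A}(\mu_t,\tilde M_t)$ a.e.; since the metric derivative of an absolutely continuous curve is square integrable, $\int_s^r\mathcal{A}(\mu_t,\tilde M_t)\,dt=\int_s^r|\dot\mu|^2(t)\,dt<\infty$ and Proposition~\ref{prop:strongupgradeps} applies. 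It yields $\sup_{t\in[s,r]}\mathcal{H}_\epsilon[\mu_t]<\infty$ together with, by Remark~\ref{rk:stronguppergradient} and then Lemma~\ref{lem:Depsslope},
\[
|\mathcal{H}_\epsilon[\mu_r]-\mathcal{H}_\epsilon[\mu_s]|\;\le\;\int_s^r|\dot\mu|(t)\,\sqrt{D_\epsilon[\mu_t]}\,dt\;\le\;\int_s^r|\dot\mu|(t)\,|\partial^-\mathcal{H}_\epsilon|(\mu_t)\,dt .
\]
The same chain of inequalities holds on every subinterval of $[s,r]$, so once $t\mapsto|\partial^-\mathcal{H}_\epsilon|(\mu_t)$ is known to be Borel the requirement of Definition~\ref{defn:sug} is met.

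For that measurability I would show that $t\mapsto|\partial^-\mathcal{H}_\epsilon|(\mu_t)$ is lower semicontinuous on $[s,r]$. Along the curve, $\mathcal{H}_\epsilon$ is bounded above (previous paragraph) and bounded below (Lemma~\ref{lem:carlen}, uniformly on $\PtwoE$), while $t\mapsto\mu_t$ is $d_L$-continuous; hence for $t_n\to t$ the sequence $\mu_{t_n}\to\mu_t$ in $d_L$ has $\sup_n(d_L(\mu_{t_n},\mu_t),\mathcal{H}_\epsilon(\mu_{t_n}))<\infty$, so it is an admissible competitor in Definition~\ref{defn:slope} and $|\partial^-\mathcal{H}_\epsilon|(\mu_t)\le\liminf_n|\partial^-\mathcal{H}_\epsilon|(\mu_{t_n})$. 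This is precisely the lower semicontinuity of the relaxed slope along sequences of equibounded energy used in the abstract theory, cf.~\cite[Corollary 2.4.10]{ambrosio_gradient_2008-1}. I expect the one genuinely delicate point to be this last step: confirming that the convergence implicit in Definition~\ref{defn:slope} is $d_L$-convergence and that the energy constraint is automatically satisfied along the curve — both true here — so that the measurability, and with it the strong upper gradient property, follows.
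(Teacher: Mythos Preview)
Your argument is correct and uses exactly the same two ingredients as the paper --- the chain rule from Proposition~\ref{prop:strongupgradeps}/Remark~\ref{rk:stronguppergradient} giving $\sqrt{D_\epsilon}$ as a strong upper gradient, followed by the pointwise inequality $\sqrt{D_\epsilon}\le|\partial^-\mathcal{H}_\epsilon|$ from Lemma~\ref{lem:Depsslope}. Your presentation is in fact more careful than the paper's: the paper only displays the inequality along a minimizing geodesic between two fixed endpoints and does not discuss Borel measurability at all, whereas you correctly work with an arbitrary absolutely continuous curve and justify measurability via the sequential lower semicontinuity of the relaxed slope.
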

\begin{proof}
	Fix $\lambda,\nu\in\mathscr{P}_{\mu_0}(\Rd)$ so that by the triangle inequality of Theorem~\ref{them:existdL}, we have $d_L(\lambda,\nu)<\infty$. Now by Proposition~\ref{prop:mincurve}, there exists a pair of curves $(\mu,M)\in\mathcal{GCE}_1^E$ connecting $\lambda,\nu$ and $\mathcal{A}(\mu_t,M_t) = d_L^2(\lambda,\nu)$ for almost every $t\in[0,1]$. Using Remark~\ref{rk:stronguppergradient} and Lemma~\ref{lem:Depsslope}, we have
	\[
	|\mathcal{H}_{\epsilon}(\lambda) - \mathcal{H}_{\epsilon}(\nu)| \leq \int_0^1 \sqrt{D_\epsilon(\mu_t)}|\dot{\mu}|(t)dt \leq \int_0^1 |\partial^-\mathcal{H}_{\epsilon}|(\mu_t)|\dot{\mu}|(t)dt.
	\]
\end{proof}
We now have all the ingredients to prove Theorem~\ref{thm:existcms} so that we can relate curves of maximal slope to weak solutions of the $\epsilon$-Landau equation.
\begin{proof}
	[Proof of Theorem~\ref{thm:existcms}]
	Take a limit curve $\mu_t$ constructed in Proposition~\ref{prop:LJKO}. By the previous Lemma~\ref{lem:slopeissug}, the assumptions of~\cite[Theorem 2.3.3]{ambrosio_gradient_2008-1} are fulfilled so the curve is of maximal slope with respect to $|\partial^-\mathcal{H}_{\epsilon}|$ and satisfies the associated energy dissipation inequality
	\[
	\mathcal{H}_{\epsilon}(\mu_r) - \mathcal{H}_{\epsilon}(\mu_s) + \frac{1}{2} \int_s^r |\partial^-\mathcal{H}_{\epsilon}(\mu_t)|^2 dt + \frac{1}{2} \int_s^r |\dot{\mu}|^2(t) dt \leq 0.
	\]
	The inequality of Lemma~\ref{lem:Depsslope} gives
	\[
	\mathcal{H}_{\epsilon}(\mu_r) - \mathcal{H}_{\epsilon}(\mu_s) + \frac{1}{2} \int_s^r D_\epsilon(\mu_t) dt + \frac{1}{2} \int_s^r |\dot{\mu}|^2(t) dt \leq 0,
	\]
	which is precisely the statement that the limit curve $\mu_t$ is a curve of maximal slope with respect to $\sqrt{D_\epsilon}$.
\end{proof}
\begin{remark}
	\label{rk:JKOremarks}
	The results of Proposition~\ref{prop:LJKO} and Lemma~\ref{lem:Depsslope} can be generalized to other regularization kernels $G^{s,\epsilon}$, in particular, the Maxwellian regularization. However, this is not the case for Lemma~\ref{lem:slopeissug} since the proof relies on Proposition~\ref{prop:strongupgradeps}, see Remark~\ref{rk:Glogdiff}.
\end{remark}

\section{Recovering the full Landau equation as $\epsilon \to 0$}
\label{sec:epsto0}
Theorems~\ref{thm:epsequiv} and~\ref{thm:existcms} provide the basic existence theory for the $\epsilon>0$ approximation of the Landau equation. In this section, we prove the $\epsilon\downarrow0$ analogue of Theorem~\ref{thm:epsequiv} which is Theorem~\ref{thm:fullequiv}. By definition, both H-solutions and curves of maximal slope to the full Landau equation dissipate the entropy. Therefore, the assumption of finite initial entropy~\ref{itm:ass2} automatically ensures
\[
\sup_{t\in[0,T]}\mathcal{H}[f_t] = \sup_{t\in[0,T]}\int_{\R^3}f_t \log f_t < +\infty.
\]
In the sequel, every quotation of~\ref{itm:ass2} will refer to this bound.
\begin{proof}[Sketch of the proof of Theorem~\ref{thm:fullequiv}]
	By repeating the proof of Theorem~\ref{thm:epsequiv}, we see that the crucial ingredient is the chain rule~\eqref{eq:FTCentropy} in Proposition~\ref{prop:strongupgradeps}. For now assume the following
	\begin{claim}
		\label{claim:fullchainrule}
		Assume~\ref{itm:ass1}, ~\ref{itm:ass2}, ~\ref{itm:ass3} and let $M$ be any grazing rate such that $(\mu,M) \in \mathcal{GCE}_T^E$ and
		\[
		\int_0^T \mathcal{A}(\mu_t,M_t) dt < \infty.
		\]
		Then we have the chain rule
		\begin{equation}
			\label{eq:fullchainrule}
			\mathcal{H}[\mu_r] - \mathcal{H}[\mu_s] = \frac{1}{2} \int_s^r \iiRsix \tn \left[
			\firstvar{\mathcal{H}}{\mu}
			\right]\cdot dM_t dt.
		\end{equation}
	\end{claim}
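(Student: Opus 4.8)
## Proof plan for Claim~\ref{claim:fullchainrule}

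The plan is to follow the same strategy as in the proof of Proposition~\ref{prop:strongupgradeps}: regularize the pair $(\mu,M)$ in time with parameter $\delta>0$, establish the chain rule for the mollified curve by differentiating in $t$, then pass $\delta\to 0$. The new difficulty compared to the $\epsilon>0$ case is that $\firstvar{\mathcal{H}}{\mu} = \log f$ is no longer bounded with bounded derivatives, so the a-priori estimates must come entirely from the assumptions~\ref{itm:ass1}--\ref{itm:ass3} rather than from the smoothing kernel $G^\epsilon$.

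First I would mollify in time exactly as before, setting $\mu_t^\delta = \int_{-1}^1 \eta(t')\mu_{t-\delta t'}\,dt'$ and $M_t^\delta = \int_{-1}^1 \eta(t')M_{t-\delta t'}\,dt'$ with constant extension past the endpoints, so that $(\mu^\delta, M^\delta)\in\mathcal{GCE}_T^E$ still satisfies the grazing continuity equation distributionally. The second moment bound and the action bound are preserved (by convexity of $\mathcal{A}$ and Jensen), and $\mu_t^\delta$ is now smooth in $t$. The point of the mollification is that $t\mapsto \mathcal{H}[\mu_t^\delta]$ becomes differentiable: one computes $\frac{d}{dt}\mathcal{H}[\mu_t^\delta] = \int \partial_t f_t^\delta\,(\log f_t^\delta + 1)\,dv = \int \partial_t f_t^\delta \log f_t^\delta\,dv$ (the constant drops by conservation of mass), and then using the grazing continuity equation this equals $\frac12\iiRsix \tn[\firstvar{\mathcal{H}}{\mu_t^\delta}]\cdot dM_t^\delta\,dt$ after integrating in time. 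Justifying the differentiation under the integral and the integration by parts requires knowing $f_t^\delta$ is positive and has enough integrability; here one uses that mollification in time does not destroy the $L^p$ and finite-entropy bounds of~\ref{itm:ass1}--\ref{itm:ass2}, plus an approximation argument in $v$ (truncation/mollification in the velocity variable) to make the integration by parts rigorous — this is essentially the content of the technical lemmata in Appendix~A.

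The main obstacle is the passage $\delta\to 0$ on the right-hand side, i.e. showing
\[
\frac12\int_s^r \iiRsix \tn\Big[\firstvar{\mathcal{H}}{\mu_t^\delta}\Big]\cdot dM_t^\delta\,dt \;\longrightarrow\; \frac12\int_s^r \iiRsix \tn\Big[\firstvar{\mathcal{H}}{\mu_t}\Big]\cdot dM_t\,dt.
\]
Unlike the $\epsilon>0$ case, where Lemma~\ref{lem:esttnfirstvar} gave a clean pointwise bound on $\tn\firstvar{\mathcal{H}_\epsilon}{\mu}$, here we only control $\tn\log f$ in $L^2(f f_*\,dv\,dv_*)$ via the finite entropy-dissipation~\ref{itm:ass3}. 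So the control on the right-hand side must go through the time-integrated Cauchy--Schwarz estimate of Lemma~\ref{lem:lemmeasfun}, bounding $\iiRsix |\tn\log f|\,d|M_t| \leq \sqrt{2}\,\mathcal{A}(\mu_t,M_t)^{1/2} D(f_t)^{1/2}$, which is integrable in $t$ by assumption. One then needs: (i) strong $L^2(f_t^\delta f_{t*}^\delta)$-type convergence of $\tn\log f^\delta_t \to \tn\log f_t$, which should follow from lower semicontinuity of the dissipation together with the convergence of $\mathcal{H}[\mu_t^\delta]\to\mathcal{H}[\mu_t]$ forcing convergence of the dissipation along the mollification (a Fatou plus "liminf $\le$ limsup" argument, using that $\int_0^T D(f_t)\,dt$ is finite and $D$ is convex, hence $\int D(f_t^\delta)\,dt \le \int D(f_t)\,dt$); and (ii) the weak-$*$ convergence $M_t^\delta\,dt \overset{*}{\rightharpoonup} M_t\,dt$ against the (now only $L^2$, not bounded) field $\tn\log f_t$, which requires a truncation in $(v,v_*)$: split into $B_R\times B_R$ and its complement, use uniform integrability of $|\tn\log f_t|$ against $d|M_t^\delta|$ on the complement (from the Cauchy--Schwarz bound and assumption~\ref{itm:ass1} controlling the tails), and on the compact set use that $\tn\log f^\delta \to \tn\log f$ in a strong enough sense. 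Finally the left-hand side converges because $x\log x$ is controlled above by the $L^{p}$ norm and below by a moment (assumption~\ref{itm:ass1}) uniformly in $\delta$, giving equi-integrability and hence $\mathcal{H}[\mu_t^\delta]\to\mathcal{H}[\mu_t]$. Assembling these gives~\eqref{eq:fullchainrule}, and then Remark~\ref{rk:stronguppergradient} upgrades it to the strong-upper-gradient property needed to rerun the proof of Theorem~\ref{thm:epsequiv}.
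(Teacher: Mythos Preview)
Your proposal takes a genuinely different route from the paper, and has a real gap.

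\textbf{What the paper does.} The paper does \emph{not} re-run the time-mollification argument for the unregularized entropy. Instead it starts from the already-proven $\epsilon$-chain rule~\eqref{eq:FTCentropy} and sends $\epsilon\to 0$. The left-hand side converges by monotonicity of $\epsilon\mapsto \mathcal{H}_\epsilon[\mu_t]$ together with~\ref{itm:ass2}. For the right-hand side, Young's inequality gives the majorant
\[
\tn\Big[\firstvar{\mathcal{H}_\epsilon}{\mu}\Big]\cdot m_t \le \tfrac12 ff_*\Big|\tn\firstvar{\mathcal{H}_\epsilon}{\mu}\Big|^2 + \tfrac12\frac{|m_t|^2}{ff_*},
\]
so by the Extended DCT the problem reduces to showing $\int_0^T D_\epsilon(f_t)\,dt \to \int_0^T D(f_t)\,dt$ \emph{and} $D_\epsilon(f_t)\le C\,D(f_t)$ uniformly in $\epsilon$. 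This is the technical core (Sections~\ref{sec:outlinestrat}--\ref{sec:cross}): the integrand of $D_\epsilon$ is decomposed via the cross-product identity of Lemma~\ref{lem:linalgcross}, and each piece is handled by iterating the self-adjointness of convolution with $G^\epsilon$ (SACRE) together with Lemma~\ref{lem:Jf} (this is where~\ref{itm:ass1} enters), Peetre's inequality, and finally Theorem~\ref{thm:desvillettes16} bounding the weighted (cross) Fisher information by $D(f)$ (this is where~\ref{itm:ass3} enters).

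\textbf{The gap in your plan.} Your step~(i) asserts that $D$ is convex so that Jensen gives $\int D(f_t^\delta)\,dt\le\int D(f_t)\,dt$. This is not established and in fact fails at the level of the integrand: writing the density of $D$ as $|f_*\nabla f - f\nabla_*f_*|^2/(ff_*)$ one checks that $(a,b,u,w)\mapsto b|u|^2/a - 2u\cdot w + a|w|^2/b$ is \emph{not} jointly convex (the term $b|u|^2/a$ already fails). Without this Jensen step your ``liminf $\le$ limsup'' argument for convergence of the dissipation along the time-mollification collapses, and with it the strong $L^2$ convergence of $\tn\log f_t^\delta$ that you need to pass the limit on the right-hand side. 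Separately, your appeal to ``the technical lemmata in Appendix~A'' to justify the $v$-integration by parts is misplaced: Appendix~A concerns the auxiliary PDE for Lemma~\ref{lem:Depsslope} and contains nothing about chain rules. The $v$-mollification you allude to is exactly the $\epsilon$-regularization, and passing it to the limit is not a routine step --- it is the entire content of Section~\ref{sec:epsto0}.
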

	By following the steps of the proof of Theorem~\ref{thm:epsequiv} and using~\eqref{eq:fullchainrule} instead of~\eqref{eq:FTCentropy}, one completes the proof of Theorem~\ref{thm:fullequiv}. We dedicate this section to proving Claim~\ref{claim:fullchainrule}. 
	
	Equation~\eqref{eq:fullchainrule} is clearly the $\epsilon\downarrow 0$ limit of~\eqref{eq:FTCentropy}. The left-hand side of~\eqref{eq:fullchainrule} can be obtained from the left-hand side of~\eqref{eq:FTCentropy} using the finite entropy assumption~\ref{itm:ass2} and the fact that $\epsilon \mapsto \mathcal{H}_\epsilon[\mu_t]$ is non-increasing for every $t$. We refer to~\cite[Proof of Proposition 4.2; Step 4: part d)]{erbar_gradient_2016} for more details on a similar argument.
	
	The difficulty remains in deducing that the right-hand side of~\eqref{eq:FTCentropy} converges to the right-hand side of~\eqref{eq:fullchainrule} as $\epsilon\downarrow 0$ given by
	\begin{equation}
		\label{eq:rhschainconv}
		\int_0^T \iiRsix \tn \firstvar{\mathcal{H}_\epsilon}{\mu}\cdot dM_t dt \to \int_0^T \iiRsix \tn \firstvar{\mathcal{H}}{\mu}\cdot dM_t dt, \quad \epsilon \downarrow 0
	\end{equation}
	under the additional assumptions~\ref{itm:ass1}, ~\ref{itm:ass2}, ~\ref{itm:ass3} on $f$. The key result which we will use repeatedly in this section is the following theorem which is a specific case of the result in~\cite[Chapter 4, Theorem 17]{R88}.
	\begin{theorem}[Extended Dominated Convergence Theorem (EDCT)]
		\label{thm:EDCT}
		Let $(H_\epsilon)_{\epsilon>0}$ and $(I_\epsilon)_{\epsilon>0}$ be sequences of  measurable functions on $X$ satisfying $I_\epsilon \geq 0$ and suppose there exists measurable functions $H,\;I$ satisfying
		\begin{enumerate}
			\item \label{item:edctmajor} $|H_\epsilon| \leq I_\epsilon$ for every $\epsilon>0$ and pointwise a.e.
			\item \label{item:edctptconv} $H_\epsilon$ and $I_\epsilon$ converge pointwise a.e. to $H$ and $I$, respectively.
			\item \label{item:edctintconv} \[
			\lim_{\epsilon\downarrow0}\int_X I_\epsilon = \int_X I <\infty.
			\]
		\end{enumerate}
		Then, we have the convergence
		\[
		\lim_{\epsilon\downarrow0} \int_X H_\epsilon = \int_X H.
		\]
	\end{theorem}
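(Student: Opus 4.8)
The statement is a mild generalization of the usual dominated convergence theorem in which the single dominating function is replaced by a convergent \emph{sequence} of dominating functions whose integrals converge. The strategy is to reduce it to Fatou's lemma applied to two cleverly chosen nonnegative sequences, namely $I_\epsilon + H_\epsilon \ge 0$ and $I_\epsilon - H_\epsilon \ge 0$ (both nonnegative pointwise a.e.\ by hypothesis~\ref{item:edctmajor}).

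\textbf{Step 1: Set-up.} By hypothesis~\ref{item:edctmajor}, $I_\epsilon + H_\epsilon \ge 0$ and $I_\epsilon - H_\epsilon \ge 0$ pointwise a.e.\ on $X$. By hypothesis~\ref{item:edctptconv}, $I_\epsilon + H_\epsilon \to I + H$ and $I_\epsilon - H_\epsilon \to I - H$ pointwise a.e. Note also that $|H| \le I$ a.e.\ (pass to the limit in~\ref{item:edctmajor}), and since $\int_X I < \infty$ by~\ref{item:edctintconv}, the functions $H$, $I$, $I\pm H$ are all integrable; in particular $\int_X H$ is well-defined and finite.

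\textbf{Step 2: Apply Fatou twice.} Fatou's lemma applied to the nonnegative sequence $(I_\epsilon + H_\epsilon)$ gives
\[
\int_X (I + H) \le \liminf_{\epsilon \downarrow 0} \int_X (I_\epsilon + H_\epsilon) = \liminf_{\epsilon\downarrow 0}\left( \int_X I_\epsilon + \int_X H_\epsilon \right) = \int_X I + \liminf_{\epsilon\downarrow 0} \int_X H_\epsilon,
\]
where the last equality uses hypothesis~\ref{item:edctintconv} that $\int_X I_\epsilon \to \int_X I < \infty$. Subtracting the finite quantity $\int_X I$ yields $\int_X H \le \liminf_{\epsilon\downarrow 0}\int_X H_\epsilon$. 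Similarly, Fatou applied to $(I_\epsilon - H_\epsilon)$ gives
\[
\int_X (I - H) \le \liminf_{\epsilon\downarrow 0}\left( \int_X I_\epsilon - \int_X H_\epsilon \right) = \int_X I - \limsup_{\epsilon\downarrow 0}\int_X H_\epsilon,
\]
so that $\limsup_{\epsilon\downarrow 0}\int_X H_\epsilon \le \int_X H$.

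\textbf{Step 3: Conclude.} Combining the two inequalities,
\[
\limsup_{\epsilon\downarrow 0}\int_X H_\epsilon \le \int_X H \le \liminf_{\epsilon\downarrow 0}\int_X H_\epsilon,
\]
which forces $\lim_{\epsilon\downarrow 0}\int_X H_\epsilon = \int_X H$, as claimed.

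\textbf{Remarks on the main obstacle.} There is essentially no deep obstacle here; the only point requiring care is the finiteness of $\int_X I$, which licenses the subtractions in Step~2 (one cannot subtract $\int_X I$ from both sides of a Fatou inequality if it is infinite), and the fact that all integrals appearing are genuinely well-defined thanks to the domination $|H_\epsilon| \le I_\epsilon$ and $|H| \le I$. One should also note that the ``sequences'' are indexed by a continuous parameter $\epsilon \downarrow 0$; this is harmless since Fatou's lemma and the $\liminf/\limsup$ manipulations work verbatim along any sequence $\epsilon_n \downarrow 0$, and the conclusion $\lim_{\epsilon\downarrow 0}$ follows by the standard subsequence characterization of limits. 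For a complete reference, this is precisely~\cite[Chapter 4, Theorem 17]{R88}, of which the above is the specialization needed in this paper.
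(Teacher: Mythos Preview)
Your proof is correct and is the standard argument via Fatou's lemma applied to $I_\epsilon \pm H_\epsilon$. The paper does not actually prove this theorem: it simply states it as a specific case of~\cite[Chapter 4, Theorem 17]{R88} and uses it as a black box, so there is nothing to compare against beyond the citation you already gave.
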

	Setting $M = m \mathcal{L}\otimes \mathcal{L}$ (valid by Proposition~\ref{lem:actfun}) and using Young's inequality on the right-hand side of~\eqref{eq:FTCentropy}, we obtain the majorants
	\[
	\tn \left[
	\frac{\delta \mathcal{H}_{\epsilon}}{\delta \mu}
	\right]\cdot m_t \leq \frac{1}{2}ff_* \left|
	\tn \left[
	\frac{\delta \mathcal{H}_{\epsilon}}{\delta \mu}
	\right]
	\right|^2 + \frac{1}{2}\frac{|m_t|^2}{ff_*}. 
	\]
	Notice that the first term is precisely the integrand of $D_\epsilon$ while the second term is the integrand of the action functional $\mathcal{A}(\mu_t,M_t)$ which has no dependence on $\epsilon$ and is henceforth ignored. We can apply EDCT~\ref{thm:EDCT} with $X = (0,T) \times \mathbb{R}^6$ to prove~\eqref{eq:rhschainconv} once we show
	\begin{equation}
		\label{eq:dissconv}
		\int_0^T \iiRsix ff_* \left|
		\tn \left[
		\frac{\delta \mathcal{H}_{\epsilon}}{\delta \mu}
		\right]
		\right|^2dv_*dvdt \to \int_0^T \iiRsix ff_* \left|
		\tn \left[
		\frac{\delta \mathcal{H}}{\delta \mu}
		\right]
		\right|^2dv_*dvdt, \quad \epsilon\downarrow 0.
	\end{equation}
	The pointwise a.e. convergence hypothesis of EDCT~\ref{thm:EDCT} is straightforward based on the regularization of $\mathcal{H}_\epsilon$ through $G^\epsilon$. Focusing on~\eqref{eq:dissconv}, we will use a standard Dominated Convergence Theorem (DCT) for the integration in the $t$ variable, by proving
	\begin{align}
		\label{eq:tedct}
		\begin{split}
			\iiRsix \frac{1}{2}ff_* \left|
			\tn \left[
			\frac{\delta \mathcal{H}_{\epsilon}}{\delta \mu}
			\right]
			\right|^2 dv_*dv &\to \iiRsix \frac{1}{2}ff_* \left|
			\tn \left[
			\frac{\delta \mathcal{H}}{\delta \mu}
			\right]
			\right|^2dv_*dv, \quad \text{a.e. }t,     \\
			\iiRsix \frac{1}{2}ff_* \left|
			\tn \left[
			\frac{\delta \mathcal{H}_{\epsilon}}{\delta \mu}
			\right]
			\right|^2 dv_*dv    &\leq C\iiRsix \frac{1}{2}ff_* \left|
			\tn \left[
			\frac{\delta \mathcal{H}}{\delta \mu}
			\right]
			\right|^2dv_*dv, \quad \text{a.e. }t \quad \forall \epsilon>0,
		\end{split}
	\end{align}
	where $C>0$ is a constant independent of $\epsilon>0$. The estimate of~\eqref{eq:tedct} guarantees the $L_t^1$ majorisation due to the finite entropy-dissipation assumption~\ref{itm:ass3}.
\end{proof}
Our estimates in this section accomplish both the convergence and the estimate of~\eqref{eq:tedct} by nested application of EDCT~\ref{thm:EDCT}. The significance of all three assumptions~\ref{itm:ass1}, ~\ref{itm:ass2}, and \ref{itm:ass3} will be apparent in proving the convergence in~\eqref{eq:tedct}. 

\begin{remark}
	In this section, the only properties of $G^\epsilon$ we use are that it is a non-negative radial approximate identity with sufficiently many moments. As in the construction of minimizing movement curves in Section~\ref{sec:JKO}, the results of this section can be achieved with other radial approximate identities.
\end{remark}
\subsection{Outline of technical strategy to prove~\eqref{eq:tedct}}
\label{sec:outlinestrat}

The need to apply EDCT~\ref{thm:EDCT} instead of the more classical Lebesgue DCT is that we are unable to prove pointwise estimates in $v$ for the function $v\to f\iRthree f_* \left| \tn \left[\firstvar{\mathcal{H}_\epsilon}{f}\right]\right|^2 dv_*$. Instead, our estimates in this section rely on the self-adjointness of convolution against radial exponentials (SACRE) to construct a convergent majorant in $\epsilon$.

\textitunder{Step 1: Finding majorants and appealing to EDCT~\ref{thm:EDCT}}
\\
We seek to find pointwise a.e. majorants in the $v$ variable
\[
f\iRthree f_* \left|
\tn \left[
\frac{\delta \mathcal{H}_{\epsilon}}{\delta \mu}
\right]
\right|^2dv_* \leq I_\epsilon^{1}(v),
\]
where $I_\epsilon^1(v)$ satisfies the hypothesis for the majorant in EDCT~\ref{thm:EDCT}. We show that $I_\epsilon^1$ converges pointwise to some $I^1$, since $I_\epsilon^1$ depends on $\epsilon$ only through convolutions against $G^{\epsilon}$, which is an approximation of the identity. Hence, we are left with showing the integral convergence Item~\ref{item:edctintconv} of EDCT~\ref{thm:EDCT}
\[
\iRthree I_\epsilon^1(v) dvdt \to \iRthree I^1(v)dv, \quad \epsilon \to 0.
\]
\\
\textitunder{Step 2: Use SACRE with $G^\epsilon$}
\\
To show the integral convergence for $I^1_\epsilon$, we find functions $A^1$ and $B^1$ such that 
\[
I_\epsilon^1(v) \leq A^1(v) (G^{\epsilon}* B^1)(v)
\]
and apply EDCT~\ref{thm:EDCT}. As in the previous step, the pointwise convergence is easily proved. Hence, we are left to show the integral convergence
\[
\iRthree A^1 (G^{\epsilon}*B^1) dv \to \iRthree A^1 B^1, \quad \epsilon\to 0.
\] 
The key observation is applying SACRE to obtain
\[
\iRthree A^1 (G^{\epsilon}*B^1) = \iRthree \underbrace{(G^{\epsilon}*A^1)B^1}_{=:I_\epsilon^2}.
\]
Therefore, we have reduced the problem to showing integral convergence Item~\ref{item:edctintconv} of EDCT for $I_\epsilon^2$ (as the pointwise convergence is easily proved).

\textitunder{Step 3: Reiterate step 2}
\\
We repeat the process outlined in Step 2 by finding functions $A^2$ and $B^2$ such that we have the pointwise bound
\[
I_\epsilon^2(v) \leq A^2(v) (G^{\epsilon}*B^2)(v).
\]
Again the pointwise convergence for the majorant follows easily, hence we only need to check the integral convergence Item~\ref{item:edctintconv} of EDCT~\ref{thm:EDCT} given by
\[
\iRthree A^2 (G^{\epsilon}*B^2) \to \iRthree A^2 B^2.
\]
Using SACRE, we study instead the integral convergence of
$$
I_\epsilon^3(v) = (G^{\epsilon}*A^2)B^2.
$$
Eventually, after a finite number of times of finding majorants and applying SACRE, we will obtain a majorant $I_\epsilon^i$ for which the estimates and the convergence as $\epsilon\to0$ follows from the standard Lebesgue DCT, using the bound of the weighted Fisher information in terms of the entropy-dissipation (see Theorem~\ref{thm:desvillettes16}) and assumption~\ref{itm:ass3}.
\subsection{Preparatory results}
\label{sec:prep}
As mentioned in the previous section, for the final step of the proof we need a bound on the weighted Fisher information and a closely related variant in terms of the entropy-dissipation originally discovered by the third author in \cite{desvillettes_exponential_2006}.
\begin{theorem}
	\label{thm:desvillettes16}
	Suppose $\gamma\in(-4,0]$ and let $f\geq 0$ be a probability density belong to $L_{2-\gamma}^1\cap L \log L (\Rthree)$. We have 
	\[
	\iRthree f(v) \langle v\rangle^\gamma \left|\nabla \frac{\delta \mathcal{H}}{\delta f}\right|^2 dv + \iRthree f(v) \japangle{v}^\gamma \left|v\times
	\nabla \frac{\delta \mathcal{H}}{\delta f}
	\right|^2dv \leq C (1+ D_{w,\mathcal{H}}(f)),
	\]
	where $C>0$ is a constant depending only on the bounds of $m_{2-\gamma}(f)$ and the Boltzmann entropy, $\mathcal{H}[f]$, of $f$.
\end{theorem}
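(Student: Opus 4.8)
The plan is to reduce the weighted Fisher information on the left-hand side to a quantity controlled by the entropy dissipation, via the coercivity of the nonlinear Landau diffusion matrix
\[
\bar A[f](v) := \int_{\mathbb R^3} f_*\,|v-v_*|^{\gamma+2}\,\Pi[v-v_*]\,dv_* .
\]
Since $\nabla\frac{\delta\mathcal H}{\delta f}=\nabla\log f=\nabla f/f$, the left side of the asserted inequality is $\int f\langle v\rangle^\gamma|\nabla\log f|^2+\int f\langle v\rangle^\gamma|v\times\nabla\log f|^2$, and the natural intermediate object is $\int f\,\nabla\log f\cdot\bar A[f](v)\,\nabla\log f\,dv$, which equals $\iint ff_*|v-v_*|^{\gamma+2}|\Pi[v-v_*]\nabla\log f(v)|^2\,dv\,dv_*$, i.e.\ the ``uncrossed'' half of $2D_{w,\mathcal H}(f)$. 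First I would assume $D_{w,\mathcal H}(f)<\infty$ (else there is nothing to prove) and, by a standard mollification/truncation argument keeping $m_{2-\gamma}(f)$ and $\mathcal H(f)$ controlled, reduce to $f$ smooth, strictly positive and rapidly decaying so that all integrations by parts below are licit.

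The key algebraic step is to expand the square in $D_{w,\mathcal H}(f)=\tfrac12\iint ff_*|v-v_*|^{\gamma+2}|\Pi[v-v_*](\nabla\log f-\nabla_*\log f_*)|^2$, use the $v\leftrightarrow v_*$ symmetry of the kernel to merge the two diagonal terms into $\int f\nabla\log f\cdot\bar A[f]\nabla\log f$, and integrate by parts twice in the cross term using $f\nabla\log f=\nabla f$, $f_*\nabla_*\log f_*=\nabla_* f_*$. Writing $a_{ij}(z)=|z|^{\gamma+2}(\delta_{ij}-z_iz_j/|z|^2)$ and computing $\sum_{i,j}\partial_i\partial_j a_{ij}(z)=-2(\gamma+3)|z|^\gamma$ (in $d=3$), read distributionally for $\gamma\le-3$, this gives
\[
\int f\,\nabla\log f\cdot\bar A[f](v)\,\nabla\log f\,dv=D_{w,\mathcal H}(f)+2(\gamma+3)\iint f(v)f(v_*)\,|v-v_*|^{\gamma}\,dv\,dv_* .
\]
For $\gamma=-3$ the symbol is $-8\pi\delta_0$, so the correction is $-8\pi\int f^2\le0$ and is simply discarded; for $\gamma\in(-3,0]$ the correction is nonnegative, and I would absorb it: splitting near/far field ($|v-v_*|\ge1$ contributes $\le 1$), the near-field part is bounded by $\theta$ times the left side of the theorem plus $C_\theta(m_{2-\gamma}(f),\mathcal H(f))$ using a Sobolev/Hardy--Littlewood--Sobolev estimate trading the singular kernel for a fractional power of the Fisher information, with $\theta$ fixed small after the coercivity step. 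For $\gamma\in(-4,-3)$ the same sign/absorption analysis must be redone for the finite-part distribution $\sum\partial_i\partial_j a_{ij}$, following the reference.

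The core input is a Desvillettes--Villani-type coercivity bound for $\bar A[f]$: there is $K(f)>0$, depending only on $m_{2-\gamma}(f)$ and $\mathcal H(f)$, such that
\[
\bar A[f](v)\,\xi\cdot\xi\ \ge\ K(f)\Big(\langle v\rangle^{\gamma}\,|\xi\cdot\tfrac{v}{|v|}|^2+\langle v\rangle^{\gamma+2}\,|\Pi_{\hat v}\xi|^2\Big),\qquad \Pi_{\hat v}:=I-\tfrac{v\otimes v}{|v|^2},
\]
where the entropy bound enters precisely to quantitatively rule out concentration of $f$ near a line through $v$, i.e.\ the only mechanism by which $\bar A[f](v)$ degenerates. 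Taking $\xi=\nabla\log f(v)$, and using $|v\times\nabla\log f|^2=|v|^2|\Pi_{\hat v}\nabla\log f|^2$ together with $\langle v\rangle^\gamma|v|^2\le\langle v\rangle^{\gamma+2}$, one gets $\int f\langle v\rangle^\gamma|\nabla\log f|^2+\int f\langle v\rangle^\gamma|v\times\nabla\log f|^2\lesssim K(f)^{-1}\int f\,\nabla\log f\cdot\bar A[f]\,\nabla\log f$, and combining with the identity above and absorbing the correction term (using smallness of $\theta$ against $K(f)$) yields the bound $\lesssim_{\,m_{2-\gamma}(f),\,\mathcal H(f)}(1+D_{w,\mathcal H}(f))$.

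I expect the main obstacle to be the quantitative \emph{anisotropic} coercivity of $\bar A[f]$ — in particular the transverse bound $\bar A[f](v)\gtrsim\langle v\rangle^{\gamma+2}$ on $v^\perp$ needed to control the $|v\times\nabla\log f|^2$ term — since at large $|v|$ the radial and transverse blocks of $\bar A[f](v)$ are, to leading order, a rank-one perturbation of one another, so non-degeneracy is borderline and genuinely requires the quantitative non-concentration coming from the entropy bound. A secondary difficulty is making the double integration by parts, and the distributional evaluation of $\sum\partial_i\partial_j a_{ij}$, rigorous for merely $L^1_{2-\gamma}\cap L\log L$ densities and in the singular range $\gamma\le-3$, which I would handle through the mollification argument together with the sign/absorption discussion above.
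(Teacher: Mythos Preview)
The paper does not prove this theorem: immediately after the statement it writes ``The estimate in this precise form can be found in [Proposition~4, p.~10]{LD\_PX}'' (the idea going back to Desvillettes' earlier entropy--dissipation papers). So there is no in-paper proof to compare against; your sketch is in fact an outline of the argument in those cited references --- expand $D(f)$, integrate the cross term by parts to get
\[
\int f\,\nabla\log f\cdot\bar A[f]\,\nabla\log f\,dv
= D(f) - \iint ff_*\,(\partial_i\partial_j a_{ij})(v-v_*)\,dv\,dv_*,
\]
use an anisotropic lower bound on $\bar A[f](v)$ (radial block $\gtrsim\langle v\rangle^\gamma$, transverse block $\gtrsim\langle v\rangle^{\gamma+2}$) depending only on $m_{2-\gamma}(f)$ and $\mathcal H(f)$, and absorb the correction term via a Sobolev/HLS interpolation against the weighted Fisher information.

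One slip worth flagging: at $\gamma=-3$ you have the sign reversed. Distributionally $\partial_i\partial_j a_{ij}=-8\pi\delta_0$, so the correction in the displayed identity is $+8\pi\!\int f^2$, not $-8\pi\!\int f^2$; it is \emph{positive} and cannot be ``simply discarded'' --- it must be absorbed exactly as you describe for $\gamma\in(-3,0]$, and this absorption (controlling $\|f\|_{L^2}^2$ by a small multiple of the weighted Fisher information plus a constant) is precisely the delicate interpolation step in Desvillettes' Coulomb paper. With that corrected, your outline matches the cited proof, and the two obstacles you single out --- the quantitative anisotropic coercivity of $\bar A[f]$ and the rigorous justification of the double integration by parts for merely $L^1_{2-\gamma}\cap L\log L$ densities in the singular range --- are exactly the ones that proof has to handle.
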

The estimate in this precise form can be found in \cite[Proposition 4, p. 10]{LD_PX}. We will refer to the second term on the left-hand side as a `cross Fisher information'. We mention here that assumption~\ref{itm:ass2} enters in the sequel since the constant $C>0$ in~\Cref{thm:desvillettes16} depends on bounds for $\mathcal{H}[f]$.

To decompose the entropy-dissipation in a manageable way that makes the cross Fisher term more apparent, we have the following linear algebra fact.
\begin{lemma}
	\label{lem:linalgcross}
	For $x,y\in\Rthree$, we have
	\[
	|x|^2 (y\cdot \Pi[x]y) = |x\times y|^2
	\]
\end{lemma}
\begin{proof}
	Without loss of generality, we assume neither $x,y=0$ or else the statement holds trivially. Let $\theta$ be an oriented angle between $x$ and $y$. We expand the definition of $\Pi[x]$ and observe
	\begin{align*}
		&|x|^2 (y\cdot \Pi[x] y) = y\cdot (|x|^2I - x\otimes x)y = |x|^2 |y|^2 - |x\cdot y|^2 = |x|^2|y|^2(1-\cos^2\theta) 	
		= |x|^2|y|^2\sin^2\theta 	\\ &= |x\times y|^2.
	\end{align*}
\end{proof}
The following lemma shows how we use assumption~\ref{itm:ass1} to control the singularity of the weight.
\begin{lemma}
	\label{lem:Jf}
	Given $\gamma\in(-3,0]$, assume that $f$ satisfies~\ref{itm:ass1} for some $0<\eta\le \gamma+3$, then we have for a.e. $t$
	\begin{equation}
		\label{eq:Jf}
		\iRthree f_*(t) |v-v_*|^\gamma dv_* \leq C_1(t)\japangle{v}^\gamma, \quad \iRthree f_*(t)|v_*|^2 |v-v_*|^\gamma dv_* \leq C_2(t) \japangle{v}^\gamma,
	\end{equation}
	where
	\[
	\begin{array}{rcl}
		||C_1||_{L^\infty(0,T)}&\lesssim_{\gamma,\eta}& ||\japangle{\cdot}^{-\gamma}f(t)||_{L^\infty\left(0,T;L^1\cap L^\frac{3-\eta}{3+\gamma-\eta}(\Rthree)\right)}\\ ||C_2||_{L^\infty(0,T)}&\lesssim_{\gamma,\eta}&||\japangle{\cdot}^{2-\gamma}f(t)||_{L^\infty\left(0,T;L^1\cap L^\frac{3-\eta}{3+\gamma-\eta}(\Rthree)\right)}.
	\end{array}
	\]
\end{lemma}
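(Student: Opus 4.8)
The plan is to bound $\iRthree f_* |v-v_*|^\gamma \, dv_*$ by splitting the $v_*$-integral into three regions governed by the relative sizes of $|v-v_*|$ and $|v_*|$: the singular zone $R_1 := \{|v-v_*| \le 1\}$, the far/small-$v_*$ zone $R_2 := \{|v-v_*| > 1,\ |v_*| \le |v|/2\}$, and the far/large-$v_*$ zone $R_3 := \{|v-v_*| > 1,\ |v_*| > |v|/2\}$. I would write $g := \japangle{\cdot}^{-\gamma} f$ and $p_0 := \tfrac{3-\eta}{3+\gamma-\eta} \in [1,\infty]$; since $\japangle{v}^{-\gamma} \le \japangle{v}^{2-\gamma}$ pointwise, one has $\|g(t)\|_{L^1\cap L^{p_0}} \le \|\japangle{\cdot}^{2-\gamma} f(t)\|_{L^1\cap L^{p_0}}$, which is bounded uniformly in $t$ by~\ref{itm:ass1}. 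We may assume $\gamma \in (-3,0)$, the case $\gamma=0$ being trivial.

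On $R_2$ one has $|v-v_*| \ge \max(1,|v|/2) \gtrsim \japangle{v}$, so $|v-v_*|^\gamma \lesssim \japangle{v}^\gamma$ (here $\gamma < 0$), hence $\int_{R_2} f_* |v-v_*|^\gamma dv_* \lesssim \japangle{v}^\gamma \iRthree f_* \, dv_* = \japangle{v}^\gamma$. On $R_3$ one has $\japangle{v_*} \gtrsim \japangle{v}$, so $\japangle{v}^{-\gamma} \lesssim \japangle{v_*}^{-\gamma}$, and since $|v-v_*|^\gamma < 1$ there, $\japangle{v}^{-\gamma}\int_{R_3} f_* |v-v_*|^\gamma dv_* \lesssim \iRthree \japangle{v_*}^{-\gamma} f_* \, dv_* = \|g\|_{L^1}$. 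On $R_1$ one has $\japangle{v_*} \simeq \japangle{v}$ (since $\big||v_*|-|v|\big| \le 1$), so $f_* \simeq \japangle{v}^\gamma g_*$, and Hölder with conjugate exponents $(p_0, p_0')$ gives
\[
\int_{R_1} f_* |v-v_*|^\gamma dv_* \lesssim \japangle{v}^\gamma \int_{R_1} g_* |v-v_*|^\gamma dv_* \le \japangle{v}^\gamma \|g\|_{L^{p_0}(\Rthree)} \left( \int_{|z| \le 1} |z|^{\gamma p_0'} dz \right)^{1/p_0'}.
\]
Since $p_0' = \tfrac{3-\eta}{-\gamma}$, we have $\gamma p_0' = \eta - 3 > -3$ exactly because $\eta > 0$, so the inner integral equals $4\pi/\eta < \infty$. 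Adding the three contributions yields the first estimate in~\eqref{eq:Jf} with $C_1(t) \lesssim_{\gamma,\eta} \|g(t)\|_{L^1\cap L^{p_0}}$; taking the supremum over $t$ and using $\|g(t)\|_{L^1\cap L^{p_0}} \le \|\japangle{\cdot}^{-\gamma}f(t)\|_{L^1\cap L^{p_0}}$ gives the claimed bound on $\|C_1\|_{L^\infty(0,T)}$.

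For the second estimate I would repeat the same three-region argument with $h := \japangle{\cdot}^2 f$ in place of $f$: then $|v_*|^2 f_* \le h_*$, one has $\japangle{\cdot}^{-\gamma} h = \japangle{\cdot}^{2-\gamma} f \in L^\infty_t(L^1\cap L^{p_0})$ by~\ref{itm:ass1}, and $\iRthree h_* \, dv_* = \iRthree \japangle{v_*}^2 f_* \, dv_* \le \|\japangle{\cdot}^{2-\gamma} f\|_{L^1}$ since $\japangle{v}^2 \le \japangle{v}^{2-\gamma}$. The three-region bounds then produce $\iRthree h_* |v-v_*|^\gamma dv_* \lesssim \japangle{v}^\gamma \|\japangle{\cdot}^{2-\gamma}f(t)\|_{L^1\cap L^{p_0}}$, which is the second inequality together with the stated bound on $C_2$.

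The only genuinely delicate point is the exponent bookkeeping in the Hölder step on $R_1$: the Lebesgue exponent $p_0 = \tfrac{3-\eta}{3+\gamma-\eta}$ from~\ref{itm:ass1} is calibrated precisely so that $\gamma p_0' = \eta - 3$, making $z \mapsto |z|^{\gamma p_0'}$ locally integrable near the origin if and only if $\eta > 0$ (and requiring $\gamma > -3$ throughout); the companion constraint $\eta \le \gamma + 3$ is exactly what keeps $p_0 \ge 1$, with the borderline $\eta = \gamma+3$ giving $p_0 = \infty$, $p_0' = 1$, still admissible since $\gamma > -3$. The remaining regions are routine far-/near-field splitting, the essential observation being that the far field must be absorbed into decay of $f$ — via its total mass and its weighted $L^1$ norm — precisely because the target $\japangle{v}^\gamma$ itself decays in $|v|$.
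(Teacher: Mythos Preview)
Your proof is correct and follows essentially the same strategy as the paper: controlling the singular region $|v-v_*|\le 1$ via H\"older with the exponent pair $\bigl(\tfrac{3-\eta}{3+\gamma-\eta},\tfrac{3-\eta}{-\gamma}\bigr)$, and the far field via the mass and the weighted $L^1$ norm of $f$. The only cosmetic difference is that the paper first splits on $|v|\le 1$ versus $|v|\ge 1$ and then subdivides, whereas your single three-region decomposition $R_1,R_2,R_3$ (using $\japangle{v_*}\simeq\japangle{v}$ on $R_1$ to extract the weight before H\"older) is a mild streamlining of the same argument.
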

\begin{proof}
	We will only prove the first inequality of~\eqref{eq:Jf} since the second inequality uses the same procedure. We split the estimation for local $|v|\leq 1$ and far-field $|v|\geq 1$.

	\noindent\textitunder{$|v|\leq 1$}
	\\
	We split the integral over $v_*$ into two regions
	\begin{align*}
		\iRthree
		f_* |v-v_*|^\gamma dv_* &= \int_{|v-v_*|\geq 1} f_* |v-v_*|^\gamma dv_* + \int_{|v-v_*|\leq 1} f_* |v-v_*|^\gamma dv_* 	\\
		&\leq 1 + \int_{|v-v_*|\leq 1} f_* |v-v_*|^\gamma dv_* ,
	\end{align*}
	where we have used that $\iRthree f = 1$ and $\gamma\leq 0$. For the integral with the singularity, we apply Young's convolution inequality with conjugate exponents $\left(
	\frac{3-\eta}{3+\gamma-\eta}, \frac{-3+\eta}{\gamma}
	\right)$
	\[
	\int_{|v-v_*|\leq 1} f_* |v-v_*|^\gamma dv_* \leq || f * (\chi_{B_1}|\cdot|^\gamma)||_{L^\infty} \leq ||f||_{L^\frac{3-\eta}{3+\gamma-\eta}} ||\chi_{B_1} |\cdot |^\gamma||_{L^{\frac{-3+\eta}{\gamma}}}\leq \left(\frac{\omega_2}{\eta}\right)^{\frac{-3+\eta}{\gamma}}||f||_{L^\frac{3-\eta}{3+\gamma-\eta}}.
	\]
	Here, $\omega_2$ is the volume of the unit sphere in $\Rthree$. 
	\\
	\textitunder{$|v|\geq 1$}
	\\
	Once again, we split the integral into two parts
	\begin{align*}
		\iRthree f_* |v-v_*|^\gamma dv_* &= \int_{|v_*|\leq \frac{1}{2}|v|} f_* |v-v_*|^\gamma dv_* + \int_{|v_*|\geq \frac{1}{2}|v|} f_* |v-v_*|^\gamma dv_* 	\\
		&\leq 2^{-\gamma} |v|^\gamma\int_{|v_*|\leq \frac{1}{2}|v|} f_*  dv_* + 2^{-\gamma}|v|^\gamma\int_{|v_*|\geq \frac{1}{2}|v|} f_* |v_*|^{-\gamma}|v-v_*|^\gamma dv_*.
	\end{align*}
	The first term and second term come from the following inequalities based on their respective integration regions
	\[
	|v-v_*| \geq |v| - |v_*| \geq \frac{1}{2}|v|, \quad 1 \leq 2^{-\gamma}|v|^\gamma |v_*|^{-\gamma}.
	\] 
	We estimate the first integral using the unit mass of $f$, while the second integral is more delicate but again uses the splitting of the previous step to obtain
	\[
	\iRthree f_* |v-v_*|^\gamma dv_* \leq 2^{-\gamma}|v|^\gamma + 2^{-\gamma}|v|^\gamma\left(
	\int_{|v-v_*|\geq 1} f_* |v_*|^{-\gamma}|v-v_*|^\gamma dv_* + \int_{|v-v_*| \leq 1} f_* |v_*|^{-\gamma}|v-v_*|^\gamma dv_*
	\right).
	\]
	In the large brackets, the first integral can be estimated by $m_{-\gamma}(f)$. Now we use the same Young's inequality argument for the remaining integral to obtain
	\begin{equation*}
		\iRthree f_* |v-v_*|^\gamma dv_* \leq 2^{-\gamma}|v|^\gamma + 2^{-\gamma}|v|^\gamma\left(
		m_{-\gamma}(f) + \left(\frac{\omega_2}{\eta}\right)^{\frac{-3+\eta}{\gamma}}|| |\cdot |^{-\gamma} f||_{L^\frac{3-\eta}{3+\gamma-\eta}(\Rthree)}
		\right).
	\end{equation*}
	The proof is complete by combining the estimates for $|v|\leq 1$ and $|v|\geq 1$.
\end{proof}
\begin{lemma}[Peetre]
	\label{lem:peetre}
	For any $p\in\mathbb{R}$ and $x,y\in\Rd$, we have
	\[
	\frac{\langle x\rangle^p}{\langle y\rangle^p} \leq 2^{|p|/2} \langle x-y\rangle^{|p|}.
	\]
\end{lemma}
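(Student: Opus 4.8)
The plan is to reduce the whole statement to one elementary submultiplicativity estimate and then dispatch the sign of $p$ by symmetry. The key claim is
\[
\langle x \rangle^2 \le 2\, \langle y \rangle^2\, \langle x-y \rangle^2, \qquad x,y \in \mathbb{R}^d .
\]
To prove it, write $x = y + (x-y)$ and use the parallelogram-type bound $|a+b|^2 \le 2|a|^2 + 2|b|^2$ to get
\[
\langle x \rangle^2 = 1 + |x|^2 \le 1 + 2|y|^2 + 2|x-y|^2 .
\]
On the other hand $2\langle y\rangle^2 \langle x-y\rangle^2 = 2(1+|y|^2)(1+|x-y|^2) = 2 + 2|y|^2 + 2|x-y|^2 + 2|y|^2|x-y|^2$, so the difference of the right-hand side and the previous upper bound is $1 + 2|y|^2|x-y|^2 \ge 0$. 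This proves the claim; taking square roots gives $\langle x \rangle \le \sqrt 2\, \langle y\rangle\, \langle x-y\rangle$.

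Now for $p \ge 0$, raising this inequality to the power $p$ and dividing by $\langle y\rangle^p$ yields immediately
\[
\frac{\langle x\rangle^p}{\langle y\rangle^p} \le 2^{p/2}\, \langle x-y\rangle^p = 2^{|p|/2}\, \langle x-y\rangle^{|p|} .
\]
For $p < 0$, set $q := -p > 0$ and observe that $\langle x\rangle^p / \langle y\rangle^p = \langle y\rangle^q / \langle x\rangle^q$. Applying the case just proved with the roles of $x$ and $y$ interchanged, together with $\langle y-x\rangle = \langle x-y\rangle$, gives $\langle y\rangle^q/\langle x\rangle^q \le 2^{q/2}\langle x-y\rangle^q = 2^{|p|/2}\langle x-y\rangle^{|p|}$, which is the desired bound.

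There is no genuine obstacle here: the only points requiring a little care are the exact constant in the submultiplicativity estimate (one should not discard the harmless $1 + 2|y|^2|x-y|^2$ term too early) and the symmetry reduction handling negative exponents, both of which are routine. The lemma will be used elsewhere purely as a tool to move $\langle\cdot\rangle$-weights between the variables $v$, $v_*$, and $v-v_*$.
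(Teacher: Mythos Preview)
Your proof is correct and follows essentially the same approach as the paper's: both establish the key submultiplicativity $\langle x\rangle^2 \le 2\langle y\rangle^2\langle x-y\rangle^2$ via the elementary bound $|a+b|^2 \le 2|a|^2+2|b|^2$, then handle $p\ge 0$ by taking powers and $p<0$ by swapping the roles of $x$ and $y$. The only cosmetic difference is that the paper introduces auxiliary variables $a,b$ before specializing, whereas you work directly with $x,y,x-y$.
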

\begin{proof}
	Our proof follows~\cite{BN73}. Starting with the case $p=2$, for fixed vectors $a,b\in\Rd$ we have, with the help of Young's inequality,
	\begin{align*}
		&\quad 1+|a-b|^2 \leq 1 + |a|^2 + 2|a||b| + |b|^2 \leq 1 + 2|a|^2 + 2|b|^2    \\
		&\leq 2 + 2|a|^2 + 2|a|^2|b|^2 + 2|b|^2 = 2(1+|a|^2)(1+|b|^2).
	\end{align*}
	Dividing by $\japangle{b}^2$ and setting $a = x-y,b=-y$, we obtain the inequality for $p=2$
	\[
	\frac{\japangle{x}^2}{\japangle{y}^2} \leq 2\japangle{x-y}^2.
	\]
	By taking non-negative powers, this proves the inequality for $p\geq 0$. On the other hand, when we divided by $\japangle{b}^2$ we could have also set $a = x-y,\; b = x$ to obtain
	\[
	\frac{\japangle{y}^2}{\japangle{x}^2} \leq 2\japangle{x-y}^2.
	\]
	Taking strictly non-negative powers here proves the inequality for $p < 0$.
\end{proof}

Next, we prove an estimate for algebraic functions (growing or decaying) convoluted against $G^\epsilon$ with respect to the original function.
\begin{lemma}
	\label{lem:mombeps}
	For any $p\in \mathbb{R}$, we have
	\[
	\iR \japangle{w}^p G^\epsilon(v-w)dw \leq C \japangle{v}^p,
	\]
	where $C>0$ is a constant depending only on $|p|$ and $m_{|p|}(G)$.
\end{lemma}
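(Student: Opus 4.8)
The plan is to combine Peetre's inequality (Lemma~\ref{lem:peetre}) with a rescaling of the kernel, thereby reducing the weighted integral to a fixed moment of $G$. First I would apply Lemma~\ref{lem:peetre} with exponent $p$ at the points $w,v\in\Rd$, which gives
\[
\japangle{w}^p \leq 2^{|p|/2}\japangle{v}^p\japangle{v-w}^{|p|}
\]
(using that $\japangle{\cdot}$ depends only on the modulus, so $\japangle{w-v}=\japangle{v-w}$). Substituting this into the integrand and pulling the factor $\japangle{v}^p$ outside the integral yields
\[
\iR \japangle{w}^p G^\epsilon(v-w)\,dw \leq 2^{|p|/2}\japangle{v}^p \iR \japangle{v-w}^{|p|} G^\epsilon(v-w)\,dw,
\]
and the change of variables $u=v-w$ (Jacobian $1$, domain preserved) turns the remaining integral into $\iR \japangle{u}^{|p|}G^\epsilon(u)\,du$, which no longer depends on $v$. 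It then suffices to bound this last quantity by a constant depending only on $|p|$ and $m_{|p|}(G)$.

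The second step is the rescaling. Writing $G^\epsilon(u)=\epsilon^{-d}G(u/\epsilon)$ and setting $u=\epsilon y$ gives $\iR \japangle{u}^{|p|}G^\epsilon(u)\,du = \iR \japangle{\epsilon y}^{|p|}G(y)\,dy$. Since $|p|\geq 0$ and $\japangle{\epsilon y}^2 = 1+\epsilon^2|y|^2 \leq 1+|y|^2 = \japangle{y}^2$ for $\epsilon\in(0,1]$, we get $\iR \japangle{\epsilon y}^{|p|}G(y)\,dy \leq \iR \japangle{y}^{|p|}G(y)\,dy = m_{|p|}(G)$, which is finite because $G$ decays exponentially. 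Combining the two steps gives the claim with $C = 2^{|p|/2}m_{|p|}(G)$.

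There is no genuine obstacle here; this is a routine manipulation. The only point deserving a remark is that the $\epsilon$-independence of $C$ relies on $\japangle{\epsilon y}\leq \japangle{y}$, valid precisely for $\epsilon\leq 1$, which is the only regime relevant to the paper since the kernel is used in the limit $\epsilon\downarrow 0$; for $\epsilon>1$ one would instead use $\japangle{\epsilon y}\leq \epsilon\japangle{y}$ and absorb an additional factor $\epsilon^{|p|}$. The identical argument applies verbatim to the general kernels $G^{s,\epsilon}$, since $G^s$ likewise has all polynomial moments finite.
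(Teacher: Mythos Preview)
Your proof is correct and follows essentially the same route as the paper: Peetre's inequality to extract the factor $\japangle{v}^p$, then a rescaling $u=\epsilon y$ to reduce to a moment of $G$. The only cosmetic difference is in the final bound on $\iR\japangle{\epsilon y}^{|p|}G(y)\,dy$: you use the monotonicity $\japangle{\epsilon y}\le\japangle{y}$ (valid for $\epsilon\le 1$), whereas the paper uses $(1+\epsilon^2|y|^2)^{|p|/2}\le C_{|p|}(1+\epsilon^{|p|}|y|^{|p|})$, obtaining $C_{|p|}[1+\epsilon^{|p|}m_{|p|}(G)]$, which likewise is uniform for $\epsilon\le 1$.
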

\begin{proof}
	
	We use Peetre's inequality in Lemma~\ref{lem:peetre} to introduce $v-w$ into the angle brackets
	\begin{align*}
		&\quad \iR \japangle{w}^p G^\epsilon(v-w) dw \leq 2^{|p|/2}\japangle{v}^p \iR \japangle{v-w}^{|p|}G^\epsilon(v-w)dw    \\ 
		&= 2^{|p|/2}\japangle{v}^p \iR (1+|w|^2)^\frac{|p|}{2}\epsilon^{-d}G(w/\epsilon) dw = 2^{|p|/2}\japangle{v}^p \iR (1+\epsilon^2|w|^2)^\frac{|p|}{2}G(w) dw     \\
		&\le C_{|p|} \japangle{v}^p \left[
		1 + \epsilon^{|p|}\iR |w|^{|p|}G(w)dw
		\right] \le C_{|p|} \left[1+\epsilon^{|p|}m_{|p|}(G)\right]\japangle{v}^p
	\end{align*}
\end{proof}
We stress that Peetre's inequality~\ref{lem:peetre} is necessary for the estimate of Lemma~\ref{lem:mombeps} with \textit{non-positive} powers $p$ which we apply in the sequel.
Finally, the last result we will need is an integration by parts formula for the differential operator associated to the cross Fisher information.
\begin{lemma}[Twisted integration by parts]
	\label{lem:crossibp}
	Let $f,g$ be smooth scalar functions of $\Rthree$ which are sufficiently integrable. Then, we have the formula
	\begin{equation*}
		\iRthree (v\times \nabla_v g(v))f(v) dv = - \iRthree g(v) (v\times \nabla_v f(v))dv.
	\end{equation*}
	Here, the meaning of $v\times \nabla_v$ is
	\[
	v\times \nabla_v f(v) = (v^2\partial^3 f(v) - v^3 \partial^2 f(v), v^3 \partial^1 f(v) - v^1 \partial^3 f(v), v^1 \partial^2 f(v) - v^2 \partial^1 f(v)).
	\]
\end{lemma}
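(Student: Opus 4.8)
The plan is to prove the vector identity componentwise and then invoke cyclic symmetry. Fix, for concreteness, the first component. Using the explicit formula for $v\times\nabla_v$ given in the statement, the first component of the left-hand side is
\[
\iRthree \big(v^2\partial^3 g(v) - v^3\partial^2 g(v)\big)f(v)\,dv .
\]
I would integrate the first summand by parts in the variable $v^3$ and the second summand by parts in the variable $v^2$. The key algebraic cancellation is that differentiating the coefficient contributes nothing, since $\partial^3 v^2 = 0$ and $\partial^2 v^3 = 0$; concretely
\[
\iRthree v^2(\partial^3 g)f\,dv = -\iRthree g\,\partial^3(v^2 f)\,dv = -\iRthree g\,v^2\partial^3 f\,dv ,
\]
and symmetrically $\int v^3(\partial^2 g)f = -\int g\,v^3\partial^2 f$. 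Subtracting yields exactly $-\int_{\R^3} g\,(v^2\partial^3 f - v^3\partial^2 f)\,dv$, which is minus the first component of the right-hand side.

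The remaining two components follow by the cyclic relabelling $(1,2,3)\mapsto(2,3,1)\mapsto(3,1,2)$, under which both sides of the claimed identity are invariant, so the same computation applies verbatim. Equivalently, one can phrase the whole argument at once: each component of the operator $v\times\nabla_v$ is the directional derivative along a vector field of the form $(0,-v^3,v^2)$ (and its cyclic permutations), which is divergence-free — these are precisely the infinitesimal generators of rotations — and for any $C^1$ divergence-free field $X$ one has $\int (X\cdot\nabla g)f\,dv = -\int g\,\nabla\cdot(Xf)\,dv = -\int g\,(X\cdot\nabla f)\,dv$.

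The only point requiring care is the vanishing of the boundary terms at infinity in the integration by parts, which is exactly what the hypothesis that $f$ and $g$ are smooth and \emph{sufficiently integrable} is there to guarantee: in the applications $f$ and its relevant first derivatives decay while $g$ grows at most polynomially, so $v^j f\,\partial^k g$ and $v^j g\,\partial^k f$ are integrable and the flux through large spheres tends to $0$. There is no genuine analytic obstacle here — the entire content is the antisymmetric cancellation $\partial^k v^j = \delta^{jk}$, i.e. the rotational (measure-preserving) nature of $v\times\nabla_v$ — so this lemma is elementary; the real work of the section lies in the preceding estimates.
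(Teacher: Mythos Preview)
Your proof is correct; the paper states this lemma without proof, treating it as elementary. Your componentwise integration by parts, with the key observation that $\partial^k v^j=0$ for $j\neq k$ (equivalently, that the rotation generators are divergence-free), is exactly the standard argument one would supply.
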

\subsection{Proof of~\eqref{eq:tedct} using EDCT~\ref{thm:EDCT}}
\label{sec:firstreduce}
We start by decomposing and estimating the integrand of $D_\epsilon$. With the help of Lemma~\ref{lem:linalgcross}, we expand the square term of the integrand to see
\begin{align*}
	\begin{split}
		\left|
		\tn \left[
		\frac{\delta \mathcal{H}_{\epsilon}}{\delta \mu}
		\right]
		\right|^2 =& \,|v-v_*|^{2+\gamma} |\Pi[v-v_*](b^\epsilon*a^\epsilon - b^\epsilon*a_*^\epsilon)|^2 	\\
		\leq &\,|v-v_*|^\gamma (4|v\times (b^\epsilon* a^\epsilon)|^2 + 4|v_*\times (b^\epsilon*a_*^\epsilon)|^2 
		\\
		&+ 4|v\times (b^\epsilon*a_*^\epsilon)|^2 + 4|v_*\times (b^\epsilon*a^\epsilon)|^2) 	\\
		\leq &\,4|v-v_*|^\gamma \underbrace{|v\times (b^\epsilon*a^\epsilon)|^2}_{\textcircled{1}} + 4|v-v_*|^\gamma \underbrace{|v_*\times (b^\epsilon*a_*^\epsilon)|^2}_{\textcircled{2}}    \\
		&+ 4|v|^2|v-v_*|^\gamma \underbrace{|b^\epsilon*a_*^\epsilon|^2}_{\textcircled{3}} + 4 |v_*|^2|v-v_*|^\gamma \underbrace{|b^\epsilon*a^\epsilon|^2}_{\textcircled{4}},
	\end{split}
\end{align*}
where we use the shorthand notation
\begin{equation}\label{eq:aandb}
	b^\epsilon=G^\epsilon\qquad\mbox{and}\qquad a^\epsilon=\nabla\log(G^\epsilon *f).    
\end{equation}
By using that $G^\epsilon$ is an approximation of the identity, we know that the integrand of $D_\epsilon$ converges pointwise a.e. to the integrand of $D$ as $\epsilon\downarrow0$. As well, each $\textcircled{i}$ for $i=1,2,3,4$ converge pointwise a.e. to 
\[
\textcircled{1} \to\frac{|v\times \nabla f|^2}{f^2}, \,  \textcircled{2} \to\frac{|v_* \times \nabla_* f_*|^2}{f_*^2}, \,  \textcircled{3} \to\frac{|\nabla_*f_*|^2}{f_*^2}, \,  \textcircled{4} \to\frac{|\nabla f|^2}{f^2}.
\]
By EDCT~\ref{thm:EDCT}, to show the integral convergence in \eqref{eq:tedct}, it suffices to show, for example,
\[
\iiRsix ff_*|v-v_*|^\gamma \textcircled{1}dvdv_* \to \iiRsix ff_* |v-v_*|^\gamma \frac{|v\times \nabla f|^2}{f^2}dvdv_*,
\]
and similarly for each $\textcircled{i}$ for $i=2,3,4$. By symmetry considerations when swapping the variables $v\leftrightarrow v_*$, the convergence for the terms $\textcircled{1}$ and $\textcircled{4}$ controls the convergence for $\textcircled{2}$ and $\textcircled{3}$, respectively. Hence we will focus on the term $\textcircled{4}$ first and then on term $\textcircled{1}$.

\subsubsection{Term \textcircled{4}}
\label{sec:nocross}
We seek to show in the limit $\epsilon\downarrow 0$,
\begin{align}
	\label{eq:term4converge}
	\begin{split}
		&\quad \iiRsix ff_* |v_*|^2 |v-v_*|^\gamma|b^\epsilon*a^\epsilon|^2 dv_* dv = \iRthree \left(\iRthree f_* |v_*|^2|v-v_*|^\gamma dv_*\right) f|b^\epsilon*a^\epsilon|^2 dv  	\\
		&\to \iRthree \left(\iRthree f_* |v_*|^2|v-v_*|^\gamma dv_*\right) \frac{|\nabla f|^2}{f} dv.
	\end{split}
\end{align}
By the reordering of integrations written above, we now think of the double integral over $v,v_*$ of $ff_* |v_*|^2 |v-v_*|^\gamma |b^\epsilon*a^\epsilon|^2$ as a single integral of the function $\left(\int_{\R^3} f_* |v_*|^2|v-v_*|^\gamma dv_*\right) f|b^\epsilon*a^\epsilon|^2$ over $v$. To be precise, we wish to apply~\Cref{thm:EDCT} with $X = \R^3$ with $H_\epsilon = \left(\int_{\R^3} f_* |v_*|^2|v-v_*|^\gamma dv_*\right) f|b^\epsilon*a^\epsilon|^2$. We can use Cauchy-Schwarz on the convolution integral to absorb the power term as follows
\begin{align*}
	\quad |b^\epsilon*a^\epsilon|^2 = \left|
	\iRthree b^\epsilon(v-w)a^\epsilon(w)dw
	\right|^2 
	&\leq \left(
	\iRthree \japangle{w}^{-\gamma}b^\epsilon(v-w)dw
	\right)\left(
	\iRthree b^\epsilon(v-w)\japangle{w}^{\gamma}|a^\epsilon(w)|^2dw
	\right) 	\\
	&\leq C\japangle{v}^{-\gamma}b^\epsilon*[\japangle{\cdot}^\gamma|a^\epsilon(\cdot)|^2],
\end{align*}
where the last inequality comes from Lemma~\ref{lem:mombeps}. Continuing with Lemma~\ref{lem:Jf}, we have
\begin{equation*}
	\left(
	\iRthree f_* |v_*|^2|v-v_*|^\gamma dv_*
	\right)f|b^\epsilon*a^\epsilon|^2 \leq C f b^\epsilon* [\japangle{\cdot}^\gamma|a^\epsilon|^2].
\end{equation*}
By EDCT~\ref{thm:EDCT}, we reduce the problem to showing in the limit $\epsilon \downarrow 0$
\begin{equation*}
	\iRthree f b^\epsilon*[\langle \cdot \rangle^\gamma|a^\epsilon|^2]dv \to \iRthree \langle v\rangle^\gamma \frac{|\nabla f|^2}{f}dv.
\end{equation*}
This is were we use SACRE, Step 2 of our general strategy~\ref{sec:outlinestrat}. Application of SACRE and further simplification using the specific forms of $a^\epsilon$ and $b^\epsilon$ (see \eqref{eq:aandb}) yields
\begin{equation}
	\label{eq:convparts41}
	\iRthree f b^\epsilon*[\langle \cdot \rangle^\gamma|a^\epsilon|^2]dv = \iRthree [b^\epsilon * f] \japangle{v}^\gamma|a^\epsilon|^2 dv =\iRthree \japangle{v}^\gamma \frac{|b^\epsilon*\nabla f|^2}{b^\epsilon * f}dv.
\end{equation}
We work with this simplified expression and note that pointwise convergence is still valid
\[
\frac{|b^\epsilon* \nabla f|^2}{b^\epsilon * f} \to \frac{|\nabla f|^2}{f}.
\]
Next, we notice that the function $\beta :  (F,f) \mapsto \frac{|F|^2}{f}$ is jointly convex in $F\in \R^3$ and $f>0$, so we can use Jensen's inequality with $b^\epsilon = G^\epsilon$ as the reference probability measure to obtain a further pointwise majorant for the integrand of~\eqref{eq:convparts41}
\begin{align*}
&\quad \frac{|b^\epsilon * \nabla f|^2}{b^\epsilon * f}(v) = \beta(b^\epsilon * \nabla f, \, b^\epsilon * f)(v) = \beta\left(
\int_{\R^3} \nabla f(v-y)b^\epsilon(y)dy, \, \int_{\R^3}f(v-y)b^\epsilon(y)dy
\right) 	\\
&\le \int_{\R^3} \beta(\nabla f(v-y), f(v-y)) b^\epsilon(y)dy = \int_{\R^3} \frac{|\nabla f (v-y)|^2}{f(v-y)}b^\epsilon(y)dy = b^\epsilon * \left[
\frac{|\nabla f|^2}{f}
\right](v).
\end{align*}
Using EDCT~\ref{thm:EDCT} again, we reduce the problem to showing in the limit $\epsilon\downarrow 0$
\begin{equation*}
	\iRthree \langle v\rangle^\gamma b^\epsilon*\left[
	\frac{|\nabla f|^2}{f}
	\right] dv \to \iRthree \langle v\rangle^\gamma \frac{|\nabla f|^2}{f}dv.
\end{equation*}
We use SACRE once more and place the convolution onto the weight term
\begin{equation*}
	\iRthree \langle v\rangle^\gamma b^\epsilon*\left[
	\frac{|\nabla f|^2}{f}
	\right] dv = \iRthree [b^\epsilon* \langle \cdot \rangle^\gamma] \frac{|\nabla f|^2}{f} dv.
\end{equation*}
Now, we are in a position to apply the classical Dominated Convergence Theorem. We notice that we have the pointwise convergence
\[
[b^\epsilon* \langle \cdot \rangle^\gamma]\to \langle v\rangle^\gamma.
\]
Furthermore, using Lemma~\ref{lem:mombeps}, we can estimate $b^\epsilon*\langle \cdot \rangle^\gamma$ uniformly in $\epsilon$ to find the domination
\begin{equation*}
	[b^\epsilon* \langle \cdot \rangle^\gamma] \frac{|\nabla f|^2}{f} \leq C \langle v\rangle^\gamma \frac{|\nabla f|^2}{f}.
\end{equation*}
Using Theorem~\ref{thm:desvillettes16}, the finite entropy-dissipation assumption~\ref{itm:ass3}, and uniformly bounded entropy~\ref{itm:ass2} (remember the constant in~\Cref{thm:desvillettes16} depends also on bounds for the entropy) we know that the right-hand side belongs to $L_v^1$ a.e. $t\in (0,T)$. Therefore, for a.e. $t\in (0,T)$ the conditions of the Dominated Convergence Theorem are satisfied so we have the integral convergence
\[
\iRthree[b^\epsilon* \langle \cdot \rangle^\gamma] \frac{|\nabla f|^2}{f}dv \to \iRthree\langle v\rangle^\gamma \frac{|\nabla f|^2}{f}dv.
\]
We have closed the argument for the convergence of~\eqref{eq:term4converge} after retracing the previous estimates with EDCT~\ref{thm:EDCT}.

\subsubsection{Term \textcircled{1}}
\label{sec:cross}
We seek to show in the limit $\epsilon\downarrow 0$, 
\begin{align}
	\label{eq:term1converge}
	\begin{split}
		&\quad \iiRsix ff_*|v-v_*|^\gamma |v\times (b^\epsilon*a^\epsilon)|^2 dv_* dv = \iRthree \left(
		\iRthree f_* |v-v_*|^\gamma dv_*
		\right)f|v\times (b^\epsilon * a^\epsilon)|^2 dv 	\\
		&\to \iRthree \left(
		\iRthree f_* |v-v_*|^\gamma dv_*
		\right)\frac{|v\times \nabla f|^2}{f} dv 
	\end{split}
\end{align}
using the same strategy of nested applications of EDCT~\ref{thm:EDCT} like in the previous Section~\ref{sec:nocross}. We will encounter difficulty when trying to use Jensen's inequality due to the cross Fisher information term. As in the previous Section~\ref{sec:nocross}, we have written this double integral over $v,v_*$ as a single integral over $v$. By EDCT~\ref{thm:EDCT} and Lemma~\ref{lem:Jf}, it suffices to show the integral convergence of
\begin{equation}
	\label{eq:term11}
	\iRthree \langle v\rangle^\gamma f|v\times (b^\epsilon*a^\epsilon)|^2 dv \to \iRthree \langle v\rangle^\gamma\frac{|v\times \nabla f|^2}{f}
\end{equation}
to obtain the integral convergence of~\eqref{eq:term1converge}. Pointwise, we can make the following manipulations 
\begin{align*}
	v\times (b^\epsilon*a^\epsilon) 
	& = 
	v\times \left(\iRthree G^\epsilon(v-w)\nabla \log (f*G^\epsilon(w))dw\right)
	\\
	& = 
	v\times \left(\iRthree \nabla G^\epsilon(v-w) \log (f*G^\epsilon(w))dw\right)
	\\
	&=
	\iRthree w\times \nabla G^\epsilon(v-w) \log (f*G^\epsilon(w) )dw\\
	&= 
	\iRthree G^\epsilon(v-w) w\times \nabla \log (f*G^\epsilon (w))dw,\addtocounter{equation}{1}\tag{\theequation} \label{eq:changevforw}
\end{align*}
where we have used the radial symmetry of $G^\epsilon$ to get the cancellation $(v-w)\times \nabla G^\epsilon(v-w)=0$ and the twisted integration by parts Lemma~\ref{lem:crossibp} (we note that we not pick any signs in the integration by parts, as the variable $w$ appears with a minus sign in the arguments of  $G^\epsilon$).

We apply Cauchy-Schwarz, multiply and divide by $\japangle{w}^\gamma$, and use Lemma~\ref{lem:mombeps} to obtain
\begin{align*}
	|v\times (b^\epsilon * a^\epsilon)|^2&\leq \left(
	\iRthree G^\epsilon(v-w) \japangle{w}^{-\gamma}dw
	\right)\left(
	\iRthree G^\epsilon(v-w) \japangle{w}^\gamma \left|
	w\times \frac{\nabla f*G^\epsilon(w)}{f*G^\epsilon(w)}
	\right|^2dw
	\right) 	\\
	&\lesssim_\gamma \japangle{v}^{-\gamma}\left(
	\iRthree G^\epsilon(v-w) \japangle{w}^\gamma \left|
	w\times \frac{\nabla f*G^\epsilon(w)}{f*G^\epsilon(w)}
	\right|^2dw
	\right).
\end{align*}
Remembering that this majorant holds pointwise on the integrand of~\eqref{eq:term11}, we multiply by $\japangle{v}^\gamma f(v)$ and obtain
\[
\japangle{v}^\gamma f(v) |v\times (b^\epsilon*a^\epsilon)|^2 \lesssim f\left(
\iRthree G^\epsilon(v-w) \japangle{w}^\gamma \left|
w\times \frac{\nabla f*G^\epsilon(w)}{f*G^\epsilon(w)}
\right|^2dw
\right).
\] 
Now, we recognise a convolution inside the brackets. Hence, using SACRE we can re-write
\[
\iRthree f\left(
\iRthree G^\epsilon(v-w) \japangle{w}^\gamma \left|
w\times \frac{\nabla f*G^\epsilon(w)}{f*G^\epsilon(w)}
\right|^2dw
\right)dv = \iRthree \japangle{v}^\gamma \frac{|v\times \nabla f*G^\epsilon(v)|^2}{f*G^\epsilon(v)}dv.
\]
Using EDCT~\ref{thm:EDCT}, we need to show the convergence of the right-hand side.
Here, it is now possible to use Jensen's inequality after some more manipulations.
\begin{claim}
	\label{claim:Jensencross}
	\begin{equation}
		\label{eq:Jensencross}
		\frac{|v\times \nabla f*G^\epsilon (v)|^2}{f*G^\epsilon(v)} \leq \iRthree G^\epsilon(v-w) \frac{|w\times \nabla f(w)|^2}{f(w)}dw.
	\end{equation}
\end{claim}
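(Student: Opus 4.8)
The plan is to rewrite the left-hand side of~\eqref{eq:Jensencross} as a genuine average of the vector field $w\mapsto w\times\nabla f(w)$ against the \emph{probability} density $w\mapsto G^\epsilon(v-w)$, using the \emph{same} kernel that appears in $(f*G^\epsilon)(v)$, and then to invoke Jensen's inequality for the jointly convex, positively $1$-homogeneous ``perspective'' function $\Phi(F,s):=|F|^2/s$ on $\Rthree\times\mathbb{R}_{\ge 0}$ — exactly the convexity already exploited for the non-cross term in Section~\ref{sec:nocross}.

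\textbf{Step 1: a convolution identity for the cross-derivative.} First I would write $\nabla f*G^\epsilon=f*\nabla G^\epsilon$, so that
\[
v\times(\nabla f*G^\epsilon)(v)=\iRthree f(w)\,\bigl[v\times(\nabla G^\epsilon)(v-w)\bigr]\,dw.
\]
Since $G^\epsilon$ is radial, $(\nabla G^\epsilon)(z)$ is parallel to $z$, whence $(v-w)\times(\nabla G^\epsilon)(v-w)=0$; writing $v=(v-w)+w$ gives $v\times(\nabla G^\epsilon)(v-w)=w\times(\nabla G^\epsilon)(v-w)=-\,w\times\nabla_w\bigl[G^\epsilon(v-w)\bigr]$. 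The twisted integration by parts of Lemma~\ref{lem:crossibp}, applied in the $w$ variable with $v$ frozen, then yields
\[
v\times(\nabla f*G^\epsilon)(v)=\iRthree G^\epsilon(v-w)\,\bigl(w\times\nabla f(w)\bigr)\,dw.
\]
The boundary terms in the integration by parts vanish because $G^\epsilon$ is smooth with exponentially decaying derivatives, while under~\ref{itm:ass1}--\ref{itm:ass3} one has enough local integrability of $f$ and $\nabla f$; if one prefers, this is made rigorous by first mollifying $f$ and passing to the limit.

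\textbf{Step 2: Jensen.} Now $w\mapsto G^\epsilon(v-w)$ is a probability density and $(f*G^\epsilon)(v)=\int_{\Rthree}G^\epsilon(v-w)f(w)\,dw$, so applying Jensen's inequality for $\Phi(F,s)=|F|^2/s$ to this probability measure with the pair $\bigl(w\times\nabla f(w),\,f(w)\bigr)$ gives
\[
\frac{|v\times(\nabla f*G^\epsilon)(v)|^2}{(f*G^\epsilon)(v)}=\Phi\!\left(\iRthree G^\epsilon(v-w)\bigl(w\times\nabla f(w)\bigr)dw,\ \iRthree G^\epsilon(v-w)f(w)\,dw\right)\le\iRthree G^\epsilon(v-w)\,\frac{|w\times\nabla f(w)|^2}{f(w)}\,dw,
\]
which is precisely~\eqref{eq:Jensencross}.

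\textbf{Main obstacle.} The hard part is Step~1: converting $v\times(\nabla f*G^\epsilon)(v)$ — whose ``external'' factor $v$ a priori breaks the convolution structure — into an honest $G^\epsilon(v-\cdot)$-average of $w\times\nabla f(w)$, with the exact same kernel normalising $(f*G^\epsilon)(v)$. This works only because the radial symmetry of $G^\epsilon$ kills the $(v-w)\times\nabla G^\epsilon$ contribution (just as in~\eqref{eq:changevforw}) \emph{and} the twisted integration by parts of Lemma~\ref{lem:crossibp} transfers the $w\times\nabla$ operator cleanly without a sign; one should also take a moment to justify that integration by parts at the regularity furnished by~\ref{itm:ass1}--\ref{itm:ass3}. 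Once Step~1 is in place, Step~2 is the routine Jensen argument already used for term~\textcircled{4}.
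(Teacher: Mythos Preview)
Your proposal is correct and follows essentially the same route as the paper: rewrite $v\times(\nabla f*G^\epsilon)(v)$ via the radial-symmetry cancellation $(v-w)\times\nabla G^\epsilon(v-w)=0$ and the twisted integration by parts of Lemma~\ref{lem:crossibp} to obtain $\int G^\epsilon(v-w)\,(w\times\nabla f(w))\,dw$, then apply Jensen for the jointly convex perspective function $(F,s)\mapsto|F|^2/s$. One minor remark: the twisted integration by parts \emph{does} carry a sign (Lemma~\ref{lem:crossibp}), which is cancelled by the chain-rule sign from $\nabla_w[G^\epsilon(v-w)]=-(\nabla G^\epsilon)(v-w)$; your computation handles this correctly even if the summary phrase ``without a sign'' is slightly imprecise.
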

\begin{proof}[Proof of Claim~\ref{claim:Jensencross}] We start by repeating a similar argument to \eqref{eq:changevforw}. Using that $G^\epsilon$ is radially symmetric and the twisted integration by parts Lemma~\ref{lem:crossibp} we obtain
	\begin{align*}
		v\times \nabla f*G^\epsilon(v) &= v\times \left(\iRthree \nabla G^\epsilon(v-w)f(w) dw\right)	\\
		&=\iRthree w\times \nabla G^\epsilon(v-w) f(w) dw	\\
		&= \iRthree G^\epsilon(v-w) \underbrace{(w\times \nabla_w f(w))}_{=: F(w)}dw.
	\end{align*}
	Therefore, since $\beta : (F,f) \mapsto \frac{|F|^2}{f}$ is jointly convex in $F\in \R^3$ and $f>0$, we apply Jensen's inequality with $G^\epsilon$ as the reference probability measure to the left-hand side of~\eqref{eq:Jensencross} to see
	\begin{align*}
	&\quad \frac{|v\times \nabla f*G^\epsilon (v)|^2}{f*G^\epsilon(v)} = \frac{|F*G^\epsilon|^2}{f*G^\epsilon}(v) = \beta(G^\epsilon * F, \, G^\epsilon * f)(v) 	\\
	&= \beta\left(
	\int_{\R^3}F(v-w)G^\epsilon(w)dw, \, \int_{\R^3}f(v-w)G^\epsilon(w)dw
	\right) 	\\
	&\le \int_{\R^3} \beta(F(v-w), \, f(v-w)) G^\epsilon(w) dw = \int_{\R^3} \frac{|(v-w)\times \nabla F(v-w)|^2}{f(v-w)} G^\epsilon(w)dw,
	\end{align*}
	which proves the claim.
\end{proof}
Continuing, by EDCT~\ref{thm:EDCT}, we seek to establish the integral convergence of 
\[
\iRthree \japangle{v}^\gamma \left[
\frac{|F|^2}{f} * G^\epsilon
\right](v) dv = \iRthree [\japangle{\cdot}^\gamma * G^\epsilon](v) \frac{|v\times \nabla f(v)|^2}{f(v)}dv.
\]
Finally, the integrand of the right-hand side has a majorant due to Lemma~\ref{lem:mombeps}
\[
[\japangle{\cdot}^\gamma * G^\epsilon](v) \frac{|v\times \nabla f(v)|^2}{f(v)} \lesssim \japangle{v}^\gamma \frac{|v\times \nabla f(v)|^2}{f(v)}.
\]
Once again, using Theorem~\ref{thm:desvillettes16} and  Assumptions~\ref{itm:ass3} and~\ref{itm:ass2}, we obtain that for a.e. $t\in(0,T)$ the right hand side belongs to $L_v^1(\Rthree)$. Using Dominated Convergence theorem, we see that the integral converges. Tracing back the estimates, this takes care of the convergence of the term \textcircled{1} and establishes the convergence in~\eqref{eq:term11}.

We note that the estimates in the previous subsections not only establish the a.e. pointwise convergence of~\eqref{eq:tedct}, but also the majorisation 
$$
\iiRsix \frac{1}{2}ff_* \left|
\tn \left[
\frac{\delta \mathcal{H}_{\epsilon}}{\delta \mu}
\right]
\right|^2 dv_*dv    \leq C\iiRsix \frac{1}{2}ff_* \left|
\tn \left[
\frac{\delta \mathcal{H}}{\delta \mu}
\right]
\right|^2dv_*dv, \quad \text{a.e. }t \quad \forall \epsilon>0,
$$
where
$$
C\lesssim ||\japangle{\cdot}^{-\gamma}f(t)||_{L^\infty\left(0,T;L^1\cap L^\frac{3-\eta}{3+\gamma-\eta}(\Rthree)\right)}+||\japangle{\cdot}^{2-\gamma}f(t)||_{L^\infty\left(0,T;L^1\cap L^\frac{3-\eta}{3+\gamma-\eta}(\Rthree)\right)}
$$
by Lemma~\ref{lem:Jf}. Hence, using assumption~\ref{itm:ass3} and \eqref{eq:tedct} we can apply Lebesgue DCT to pass to the limit in the time integral and show the desired chain rule Claim~\ref{claim:fullchainrule}.

\begin{appendix}
	\section{An auxiliary PDE for Lemma~\ref{lem:Depsslope}}
	\label{sec:auxpde}
	In this section, we fix $\epsilon>0$ throughout and study weak solutions to the following PDE
	\begin{equation}
		\label{eq:auxpde}
		\left\{
		\begin{array}{rcl}
			\partial_t \mu 	&= 	&\nabla\cdot \{
			\mu\phi_{R_1} \iR  \phi_{R_1*} \psi_{R_2}(v-v_*)|v-v_*|^{\gamma+2} \Pi[v-v_*](J_0^\epsilon - J_{0*}^\epsilon)d\mu(v_*)
			\} 	\\
			\mu(0) 	&= 	&\mu_0
		\end{array}
		\right..
	\end{equation}
	We assume the initial data $\mu_0$ belongs to $\mathscr{P}_2(\Rd)$.
	For $R_1,R_2 >0$, the functions  $0 \leq \phi_{R_1}, \psi_{R_2} \leq 1$ are smooth cut-off functions used to approximate the identity function in different ways.
	\[
	\phi_{R_1}(v) = \left\{
	\begin{array}{cl}
		1, 	&|v| \leq R_1 	\\
		0, 	&|v| \geq R_1+1
	\end{array}
	\right., 	\quad
	\psi_{R_2}(z) = \left\{
	\begin{array}{cl}
		0, 	&|z| \leq 1/R_2 	\\
		1, 	&|z| \geq 2/R_2
	\end{array}
	\right..
	\]
	For $\epsilon > 0$, $J_0^\epsilon$ is the gradient of first variation of $\mathcal{H}_{\epsilon}$ \textit{applied to }$\mu_0$, meaning
	\[
	J_0^\epsilon = \nabla G^{\epsilon} * \log [\mu_0 * G^{\epsilon}]\in C^\infty(\Rd;\Rd).
	\]
	The main result of this section is
	\begin{theorem}
		\label{thm:auxpde}
		Fix $\epsilon,R_1,R_2>0$, $\gamma\in \R$, and $\mu_0 \in \mathscr{P}_2(\Rd)$. Then, there is a global unique weak solution $\mu\in C([0,+\infty); \, \mathscr{P}_2(\R^d))$ to~\eqref{eq:auxpde}.
	\end{theorem}
	By Lemma~\ref{lem:estlogdiff}, we know that $J_0^\epsilon$ is uniformly bounded (with constant depending on $\epsilon$ and $\mu_0$ only through bounds on its second moment). The purpose of $\phi_{R_1}, \phi_{R_1*}$ is to cut off the growth of $J_0^\epsilon,J_{0*}^\epsilon$ to ensure that the `velocity field' in the right-hand side of~\eqref{eq:auxpde} is globally Lipschitz (it is, in fact, smooth and compactly supported). The $\psi_{R_2}(v-v_*)$ term avoids the possible singularities coming from the weight $|v-v_*|^{\gamma+2}$ for soft potentials $\gamma < 0$.
	
	The construction of the solution in~\Cref{thm:auxpde} is given in two steps. Firstly, a local well-posedness theory established to some finite time interval $T>0$ which depends on $\epsilon, \, \gamma, \, R_1, \, R_2$ and $\mu_0$. Secondly, the time of existence (and uniqueness) is extended to $+\infty$ since $T$ depends on $\mu_0$ only through its second moment which is conserved by the evolution of~\eqref{eq:auxpde}.
	
	We fix $T>0$ to be determined explicitly later. Our strategy is to employ a fixed point argument in the space $C([0,T];\mathscr{P}_2(\Rd))$ which we will equip with the following metric
	\[
	d(\mu,\nu) := \sup_{t\in [0,T]}W_2(\mu(t),\nu(t)), \quad \mu,\nu \in C([0,T];\mathscr{P}_2(\Rd)),
	\]
	where $W_2$ is the 2-Wasserstein distance on $\mathscr{P}_2(\Rd)$. We have closely followed the procedure in~\cite{canizo_well-posedness_2011} with appropriate modifications for this setting.
	\begin{remark}
		Since we are cutting off the `velocity' field at radius $R_1,R_2$, the growth of $J_0^\epsilon$ is inconsequential. Hence the results of this section can be applied when replacing the convolution kernel of $J_0^\epsilon$ with general tailed exponential distributions $G^{s,\epsilon}(v)$ for $s>0$.
	\end{remark}
	For $\mu\in\mathscr{P}_2(\Rd)$, we will denote by $U[\mu](v)$ the following function
	\[
	U[\mu](v) := -\phi_{R_1} \iR  \phi_{R_1*} \psi_{R_2}(v-v_*)|v-v_*|^{\gamma+2} \Pi[v-v_*](J_0^\epsilon - J_{0*}^\epsilon)d\mu(v_*),
	\]
	so that the PDE in~\eqref{eq:auxpde} can be written as a nonlinear transport/continuity equation
	\[
	\partial_t \mu(t) = -\nabla\cdot \left\{
	\mu(t) U[\mu(t)]
	\right\}.
	\]
	To fix ideas, the weak formulation of~\eqref{eq:auxpde} is such that the following equality holds for all test functions $\tau\in C_c^\infty(\Rd)$ and times $t\in[0,T]$
	\begin{align*}
		&\iR\tau(v)d\mu_r(v) - \iR\tau(v)d\mu_0(v) 	\\
		&\quad = \int_0^t \iR \phi_{R_1}\nabla\tau(v) \cdot \iR \phi_{R_1*} \psi_{R_2}(v-v_*)|v-v_*|^{\gamma+2} \Pi[v-v_*](J_0^\epsilon - J_{0*}^\epsilon)d\mu_s(v_*)d\mu_s(v)ds.
	\end{align*}
	Thanks to all the smooth cutoffs from $\phi_{R_1},\phi_{R_1*},$ and $\psi_{R_2}$ and $\mu_0\in\mathscr{P}_2(\Rd)$, we can enlarge the class of test functions to smooth functions with quadratic growth. In particular, by choosing $\tau(v) = |v|^2$ and symmetrising the right-hand side by swapping $v\leftrightarrow v_*$, we see that the second moment of $\mu_0$ is conserved along the evolution of~\eqref{eq:auxpde}.
	
	Our first step is to look at the level of the characteristic equation associated to~\eqref{eq:auxpde}.
	\begin{lemma}
		[Characteristic equation]
		\label{lem:charaux}
		For any $T>0$, $\mu\in C([0,T];\mathscr{P}_2(\Rd))$ and $v_0\in\Rd$, there exists a unique solution $v\in C^1((0,T);\Rd)\cap C([0,T];\Rd)$ to the following ODE
		\[
		\frac{dv}{dt} = U[\mu(t)](v), \quad v(0) = v_0.
		\]
		Furthermore, the growth rate satisfies
		\[
		|v(t)| \leq \max\{
		|v_0|,R_1+1
		\}, \quad \forall t\in[0,T].
		\]
	\end{lemma}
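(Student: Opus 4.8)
The plan is to read the characteristic equation of \eqref{eq:auxpde} as a non-autonomous ODE $\dot v = U[\mu(t)](v)$ and to apply the classical Cauchy--Lipschitz (Picard--Lindel\"of) theory, for which it suffices to verify that $(t,v)\mapsto U[\mu(t)](v)$ is continuous in $t$ for each fixed $v$ and globally Lipschitz in $v$ with a Lipschitz constant independent of $t\in[0,T]$; the confinement estimate $|v(t)|\le\max\{|v_0|,R_1+1\}$ will then be obtained separately by an elementary argument using that the field vanishes on $\{|v|\ge R_1+1\}$. Throughout, write
\[
\Phi(z):=\psi_{R_2}(z)\,|z|^{\gamma+2}\,\Pi[z],\qquad U[\mu(t)](v)=-\phi_{R_1}(v)\,g_t(v),
\]
where $g_t(v):=\iR \phi_{R_1}(v_*)\,\Phi(v-v_*)\bigl(J_0^\epsilon(v)-J_0^\epsilon(v_*)\bigr)\,d\mu(t)(v_*)$, and note that $\Phi\in C^\infty(\Rd)$ because $\psi_{R_2}$ vanishes on $\{|z|\le 1/R_2\}$ and thereby cancels the only singularity of $|z|^{\gamma+2}\Pi[z]$.

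The first and main step is the uniform Lipschitz bound. By Lemma~\ref{lem:estlogdiff}, $J_0^\epsilon=G^\epsilon*\nabla\log(\mu_0*G^\epsilon)$ is smooth with $J_0^\epsilon$ and $\nabla J_0^\epsilon$ bounded by constants depending only on $\epsilon$ and $\mu_0$. In $g_t$ the factor $\phi_{R_1}(v_*)$ confines the integration to $v_*\in B_{R_1+1}$, while the prefactor $\phi_{R_1}$ and its gradient are supported in $\{|v|\le R_1+1\}$; hence, differentiating $U[\mu(t)]$ under the integral sign---legitimate since the integrand is smooth in $v$ with compact $v_*$-support---every resulting term evaluates $\Phi$ and $\nabla\Phi$ only on the fixed compact set $\{|z|\le 2(R_1+1)\}$, on which they are bounded. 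Since $\mu(t)$ is a probability measure, this yields $U[\mu(t)]\in C^1_c(\Rd)$ with $\|U[\mu(t)]\|_{C^1(\Rd)}\le C(\epsilon,\gamma,R_1,R_2)$ uniformly in $t$, so in particular $v\mapsto U[\mu(t)](v)$ is globally Lipschitz with a $t$-independent constant.

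For continuity in time, fix $v$ and observe that $v_*\mapsto \phi_{R_1}(v_*)\Phi(v-v_*)\bigl(J_0^\epsilon(v)-J_0^\epsilon(v_*)\bigr)$ is continuous and compactly supported, so the $W_2$-continuity of $t\mapsto\mu(t)$ (which implies weak convergence of probability measures) gives continuity of $t\mapsto g_t(v)$, hence of $t\mapsto U[\mu(t)](v)$. The Cauchy--Lipschitz theorem for ODEs whose vector field is continuous in $t$ and uniformly Lipschitz in the state variable then produces a unique maximal solution; global Lipschitzness excludes finite-time blow-up, so the solution exists on all of $[0,T]$ and lies in $C^1([0,T];\Rd)\subset C^1((0,T);\Rd)\cap C([0,T];\Rd)$. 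Finally, the confinement bound: if $|v_0|>R_1+1$ then $v_0$ lies in the open set $\{|v|>R_1+1\}$ on which $\phi_{R_1}$, hence $U[\mu(t)]$, vanishes, so $v(t)\equiv v_0$ solves the ODE and, by uniqueness, is the solution, giving $|v(t)|=|v_0|=\max\{|v_0|,R_1+1\}$. If $|v_0|\le R_1+1$ while $|v(t_*)|>R_1+1$ for some $t_*$, set $\sigma:=\sup\{s\in[0,t_*]:|v(s)|=R_1+1\}$, which is well defined since $t\mapsto|v(t)|$ is continuous with $|v(0)|\le R_1+1<|v(t_*)|$, satisfies $\sigma<t_*$ and $|v(\sigma)|=R_1+1$, and $|v(s)|>R_1+1$ on $(\sigma,t_*]$ by the intermediate value theorem; but then $\phi_{R_1}(v(s))=0$ and $\dot v(s)=0$ on $(\sigma,t_*]$, so $v$ is constant there and $|v(t_*)|=|v(\sigma)|=R_1+1$, a contradiction. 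Hence $|v(t)|\le R_1+1$ for all $t$, as claimed.

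The step I expect to require the most care is the uniform-in-$t$ global Lipschitz estimate: the bare kernel $\Phi(v-v_*)\bigl(J_0^\epsilon(v)-J_0^\epsilon(v_*)\bigr)$ and its $v$-gradient fail to be globally bounded once $\gamma>-2$, so one must genuinely use \emph{both} cutoffs $\phi_{R_1}(v)$ and $\phi_{R_1}(v_*)$---together with the compact support of $\nabla\phi_{R_1}$---to keep every evaluation of $\Phi$ inside a fixed compact set. After that bookkeeping, the remainder is the textbook ODE argument plus the elementary last-crossing-time trick for the confinement estimate.
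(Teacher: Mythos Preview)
Your proposal is correct and follows essentially the same approach as the paper: establish that $U[\mu(t)](\cdot)$ is smooth and compactly supported (hence globally Lipschitz) uniformly in $t$, invoke Cauchy--Lipschitz, and deduce the confinement from the fact that the field vanishes outside $B_{R_1+1}$. Your treatment is considerably more detailed---in particular, your last-crossing-time argument for the case $|v_0|\le R_1+1$ fills in a step the paper leaves implicit (it only remarks that points outside the ball do not move).
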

	\begin{proof}
		$U[\mu(t)](\cdot)$ is smooth and compactly supported uniformly in $t$, so classical Cauchy-Lipschitz theory gives existence and uniqueness of solution $v$ with the promised regularity.
		
		For the estimate on the growth rate, note that $U[\mu]$ has support contained in $B_{R_1+1}$. Points outside this ball do not change in time according to this ODE.
	\end{proof}
	We will denote by $\Phi_{\mu}^t$ the flow map associated to this ODE, so that
	\[
	\frac{d}{dt}\Phi_{\mu}^t(v_0) = U[\mu(t)](\Phi_{\mu}^t(v_0)), \quad \Phi_{\mu}^0(v_0) = v_0.
	\]
	It is known that, given $\nu\in C([0,T];\mathscr{P}_2(\Rd))$, the curve of probability measures $\mu(t) = \Phi_{\nu}^t \# \mu_0$ is a weak solution to
	\[
	\partial_t \mu(t) = -\nabla\cdot \left\{
	\mu(t) U[\nu(t)]
	\right\}, \quad \mu(0) = \mu_0.
	\]
	Here, $\Phi_{\nu}^t\# \mu_0$ is the push-forward measure of $\mu_0$ defined in duality with $\tau\in C_b(\Rd)$ by
	\[
	\iR \tau(v) d(\Phi_{\nu}^t\# \mu_0)(v) = \iR \tau(\Phi_{\nu}^t(v))d\mu_0(v).
	\]
	We seek to find a fixed point to the map $\mu\mapsto \Phi_\mu^t \# \mu_0$ as it would weakly solve~\eqref{eq:auxpde}. To better understand the properties of this map, we need to establish estimates on the flow map through $U$ as a function of time and measures.
	\begin{lemma}
		[$L^\infty$ estimate for velocity field]
		\label{lem:Linftyvel}
		There exists a constant $C = C(\epsilon,\gamma, R_1,R_2, \mu_0)>0$  such that for every $T>0$ and $\nu \in C([0,T];\mathscr{P}_2(\Rd))$, we have
		\[
		|U[\nu(t)](v)| \leq C, \quad \forall t\in[0,T], v\in\Rd.
		\]
	\end{lemma}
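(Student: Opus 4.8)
The plan is to obtain the estimate pointwise, combining the uniform control of $J_0^\epsilon$ furnished by Lemma~\ref{lem:estlogdiff} with the localization forced by the cutoffs $\phi_{R_1}$ and $\psi_{R_2}$. First I would record that $J_0^\epsilon = \nabla G^\epsilon * \log[\mu_0*G^\epsilon] = G^\epsilon * \nabla\log[\mu_0*G^\epsilon]$, so the first estimate in~\eqref{eq:extlogdiffsgeq1} gives $\|J_0^\epsilon\|_{L^\infty(\Rd)} \leq \tfrac1\epsilon \int_{\Rd} G^\epsilon = \tfrac1\epsilon$, whence $|J_0^\epsilon(v) - J_0^\epsilon(v_*)| \leq \tfrac2\epsilon$ for all $v,v_*\in\Rd$; and I would note that $\Pi[v-v_*]$ has operator norm $1$.

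Next I would observe that the integrand defining $U[\nu(t)](v)$ is supported, as a function of the pair $(v,v_*)$, on the set where $|v|\leq R_1+1$, $|v_*|\leq R_1+1$, and $|v-v_*|\geq 1/R_2$ (coming respectively from $\phi_{R_1}(v)$, $\phi_{R_1*}$, and $\psi_{R_2}(v-v_*)$). On this set $\tfrac1{R_2}\leq |v-v_*|\leq 2(R_1+1)$, so for any $\gamma\in\R$ the weight satisfies $|v-v_*|^{\gamma+2}\leq C_0 := \max\{R_2^{-(\gamma+2)},\,(2R_1+2)^{\gamma+2}\}$, a constant depending only on $\gamma, R_1, R_2$. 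This two-sided control of the (possibly singular, possibly growing) weight is the only point that needs care, and it is exactly what the two cutoffs are there for; once it is in place the rest is immediate.

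Putting the three facts together and using $0\leq \phi_{R_1}\leq 1$ together with the fact that $\nu(t)$ is a probability measure,
\[
|U[\nu(t)](v)| \leq \int_{\Rd} \phi_{R_1}(v)\,\phi_{R_1}(v_*)\,\psi_{R_2}(v-v_*)\,|v-v_*|^{\gamma+2}\,\|\Pi[v-v_*]\|\,|J_0^\epsilon(v)-J_0^\epsilon(v_*)|\,d\nu(t)(v_*) \leq \frac{2C_0}{\epsilon},
\]
uniformly in $t\in[0,T]$ and $v\in\Rd$, so one may take $C = 2C_0/\epsilon$ (the claimed dependence on $\mu_0$ is harmless, and here actually dispensable). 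I do not expect any genuine obstacle in this lemma: it is elementary once the support considerations above are recorded, and its role is simply to feed into the Cauchy--Lipschitz/fixed-point machinery for Theorem~\ref{thm:auxpde}.
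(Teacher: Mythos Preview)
Your proof is correct and slightly cleaner than the paper's. The paper splits into two cases according to the sign of $\gamma+2$: for $\gamma\ge -2$ it uses the (concave) triangle inequality $|v-v_*|^{\gamma+2}\lesssim_\gamma |v|^{\gamma+2}+|v_*|^{\gamma+2}$ and then exploits the compact support of $\phi_{R_1}$, while for $\gamma<-2$ it uses $\psi_{R_2}(v-v_*)|v-v_*|^{\gamma+2}\lesssim R_2^{-(\gamma+2)}$ to kill the singularity. You instead observe that on the support of the integrand one has the two-sided bound $1/R_2\le |v-v_*|\le 2(R_1+1)$, which yields a single constant $C_0=\max\{R_2^{-(\gamma+2)},(2R_1+2)^{\gamma+2}\}$ valid for all $\gamma\in\R$ and removes the need for a case split. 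Your remark that the $\mu_0$-dependence is dispensable is also correct: the bound $\|J_0^\epsilon\|_{L^\infty}\le 1/\epsilon$ from Lemma~\ref{lem:estlogdiff} holds for any probability measure.
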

	\begin{proof}
		\underline{Estimate for $\gamma \geq -2$:}
		\\
		We have the following three inequalities
		\[
		|v-v_*|^{\gamma+2} \lesssim_\gamma |v|^{\gamma+2} + |v_*|^{\gamma+2}, \quad ||\Pi[v-v_*]||\leq 1, \quad J_0^\epsilon \lesssim_{\epsilon,\mu_0}1
		\]
		due to the range of $\gamma$, boundedness of $\Pi$, and Lemma~\ref{lem:estlogdiff}, respectively. These three inequalities provide the estimate
		\[
		|U[\nu(t)](v)| \lesssim_{\gamma,\epsilon,\mu_0} \phi_{R_1}(v) \iR \phi_{R_1}(v_*)(|v|^{\gamma+2} + |v_*|^{\gamma+2})d\nu_t(v_*),
		\]
		where we have dropped $\psi_{R_2}$ altogether. For the integral term, we apply H\"older's inequality taking advantage of the compact support of $\phi_{R_1}$ and the unit mass of $\nu_t$ to further obtain
		\[
		|U[\nu(t)](v)| \lesssim_{\gamma,\epsilon,\mu_0}\phi_{R_1}(v)(R_1^{2+\gamma} + \langle v \rangle^{2+\gamma}) \iR d\nu_t(v_*) \lesssim_{R_1} \phi_{R_1}(v) \langle v \rangle^{2+\gamma}.
		\]
		Again, since $\phi_{R_1}$ has compact support, we can brutally estimate the polynomial to conclude.
		\\
		\underline{Estimate for $\gamma < -2$:}
		\\
		Unlike the previous case, we change one of the inequalities due to the unavailability of a triangle inequality and use
		\[
		\psi_{R_2}(v-v_*)|v-v_*|^{\gamma+2} \lesssim 1/R_2^{\gamma+2}, \quad ||\Pi[v-v_*]||\leq 1, \quad J_0^\epsilon \lesssim_{\epsilon,\mu_0}1.
		\]
		From these inequalities and the compact support of $\phi_{R_1}$, we have
		\[
		|U[\nu(t)](v)| \lesssim_{\gamma,\epsilon,\mu_0,R_2} \phi_{R_1}(v) \iR \phi_{R_1}(v_*) d\nu_t(v_*) \leq 1,
		\]
		which concludes the proof.
	\end{proof}
	The next result follows exactly as in~\cite{canizo_well-posedness_2011}.
	\begin{lemma}
		[Time continuity of flow map]
		\label{lem:timecontflow}
		Let $C=C(\epsilon,\gamma,R_1,R_2,\mu_0)>0$ be the same constant from Lemma~\ref{lem:Linftyvel}. Then for any $T>0$, and $\nu\in C([0,T];\mathscr{P}_2(\Rd))$ we have
		\[
		||\Phi_{\nu}^t - \Phi_\nu^s||_{L^\infty(\Rd)} \leq C|t-s|.
		\]
	\end{lemma}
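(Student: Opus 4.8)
The plan is to integrate the characteristic ODE and bound the resulting integral using the uniform velocity estimate from Lemma~\ref{lem:Linftyvel}. By Lemma~\ref{lem:charaux}, for each fixed $v_0 \in \Rd$ the curve $t \mapsto \Phi_\nu^t(v_0)$ is $C^1$ on $(0,T)$, continuous on $[0,T]$, and solves $\frac{d}{dt}\Phi_\nu^t(v_0) = U[\nu(t)](\Phi_\nu^t(v_0))$. First I would write, for any $0 \le s \le t \le T$,
\[
\Phi_\nu^t(v_0) - \Phi_\nu^s(v_0) = \int_s^t U[\nu(r)]\big(\Phi_\nu^r(v_0)\big)\, dr,
\]
which is justified by the fundamental theorem of calculus applied to the $C^1$ curve (with the continuity at the endpoints handling $s=0$).

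Next I would take absolute values and invoke Lemma~\ref{lem:Linftyvel}, which gives a constant $C = C(\epsilon,\gamma,R_1,R_2,\mu_0) > 0$ — the very same constant named in the statement — such that $|U[\nu(r)](v)| \le C$ for all $r \in [0,T]$ and all $v \in \Rd$. Hence
\[
\big|\Phi_\nu^t(v_0) - \Phi_\nu^s(v_0)\big| \le \int_s^t \big|U[\nu(r)]\big(\Phi_\nu^r(v_0)\big)\big|\, dr \le C\,|t-s|.
\]
Since the right-hand side is independent of $v_0$, taking the supremum over $v_0 \in \Rd$ yields $\|\Phi_\nu^t - \Phi_\nu^s\|_{L^\infty(\Rd)} \le C\,|t-s|$, as claimed; the case $t \le s$ follows by symmetry of the estimate in $s,t$.

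There is no substantial obstacle here: the argument is a direct Grönwall-free estimate once the uniform $L^\infty$ bound on the velocity field is in hand, and that bound is precisely the content of the preceding Lemma~\ref{lem:Linftyvel}. The only minor point to be careful about is the regularity at the endpoints $t=0$ (and $t=T$), where $\Phi_\nu^t(v_0)$ is only asserted to be continuous rather than $C^1$; this is handled by noting that the integral representation extends to the closed interval by continuity, or equivalently by applying the open-interval identity on $[s+\delta, t-\delta]$ and letting $\delta \downarrow 0$.
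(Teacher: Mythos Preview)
Your proof is correct and is exactly the standard argument: integrate the characteristic ODE and apply the uniform $L^\infty$ bound on the velocity field from Lemma~\ref{lem:Linftyvel}. The paper does not write out a proof at all, simply citing \cite{canizo_well-posedness_2011}, and the argument there is precisely the one you give.
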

	Our next objective is to establish the regularity of the flow map with respect to the measures in the subscript. To simplify the subsequent lemmata, let us use the notation in the following
	\begin{lemma}
		\label{lem:defF}
		Define 
		\[
		F: (v,w) \in \Rd\times \Rd \mapsto
		\phi_{R_1}(v) \phi_{R_1}(w) \psi_{R_2}(v-w)|v-w|^{\gamma+2}\Pi[v-w](J_0^\epsilon(v) - J_0^\epsilon(w)). 	
		\]
		The function $F$ is smooth and compactly supported. In particular, for every $k,l\in\mathbb{N}$, there is a constant $C=C(\epsilon,\gamma,R_1,R_2,\mu_0,k,l)>0$ such that
		\[
		||D_v^kD_w^l F||_{L^\infty(\Rd\times \Rd)} \leq C.
		\]
		More precisely, the constant $C$ depends on $\mu_0$ only through bounds on its second moment as in~\Cref{lem:estlogdiff}.
	\end{lemma}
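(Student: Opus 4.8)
The plan is to verify the three asserted properties of $F$ — compact support, $C^\infty$ regularity, and uniform bounds on all its derivatives — essentially by inspection of the three groups of factors that make it up. The underlying point is that the two cutoffs remove precisely the two potential sources of bad behaviour: the cutoff $\phi_{R_1}(v)\phi_{R_1}(w)$ confines everything to a fixed ball and thereby tames the growth of $J_0^\epsilon$ and of $|v-w|^{\gamma+2}$ at infinity, while the cutoff $\psi_{R_2}(v-w)$ removes the singularities of $|v-w|^{\gamma+2}$ and of $\Pi[v-w]=I-\tfrac{(v-w)\otimes(v-w)}{|v-w|^2}$ at $v=w$.

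First I would settle compact support: since $\phi_{R_1}$ is supported in $\overline{B_{R_1+1}}$, the factor $\phi_{R_1}(v)\phi_{R_1}(w)$ forces $\operatorname{supp}F\subseteq\overline{B_{R_1+1}}\times\overline{B_{R_1+1}}$, which is compact. Moreover, on $\operatorname{supp}F$ one has $|v-w|\le 2(R_1+1)$, and where $\psi_{R_2}(v-w)\ne0$ one also has $|v-w|\ge 1/R_2$, so that the argument $z=v-w$ of the factor $|z|^{\gamma+2}\Pi[z]$ always lies in the compact annulus $\{\,1/R_2\le|z|\le2(R_1+1)\,\}$, which is bounded away from the origin.

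Next I would check smoothness, treating the three pieces separately. The cutoffs $\phi_{R_1}(v)$, $\phi_{R_1}(w)$ are smooth by construction. The map $z\mapsto|z|^{\gamma+2}\Pi[z]$ is $C^\infty$ on $\Rd\setminus\{0\}$ for every real $\gamma$ (write $|z|^{\gamma+2}=(|z|^2)^{(\gamma+2)/2}$ and note the entries of $\Pi[z]$ are rational with nonvanishing denominator for $z\ne0$); multiplying it by $\psi_{R_2}(z)$, which vanishes identically on a neighbourhood of $0$, yields a function that extends by $0$ to a $C^\infty$ function on all of $\Rd$, and composing with $(v,w)\mapsto v-w$ preserves smoothness. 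Finally $J_0^\epsilon=\nabla G^\epsilon*\log[\mu_0*G^\epsilon]$ is smooth: $\mu_0*G^\epsilon$ is smooth and strictly positive because $G^\epsilon>0$, hence $\log[\mu_0*G^\epsilon]$ is smooth with at most linear growth by Lemma~\ref{lem:carlen}, so one may differentiate under the integral sign, moving any number of derivatives onto the exponentially decaying kernel $\nabla G^\epsilon$ (cf.\ Lemma~\ref{lem:estlogdiff}); thus $J_0^\epsilon\in C^\infty(\Rd;\Rd)$. Products and compositions of these pieces give $F\in C^\infty(\Rd\times\Rd)$.

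Finally I would obtain the derivative bounds by expanding $D_v^kD_w^lF$ with the Leibniz rule into a finite sum of products of derivatives of $\phi_{R_1}(v)$, of $\phi_{R_1}(w)$, of the composite factor $(v,w)\mapsto\psi_{R_2}(v-w)|v-w|^{\gamma+2}\Pi[v-w]$, and of $J_0^\epsilon(v)-J_0^\epsilon(w)$, and then bounding each factor uniformly: the $\phi_{R_1}$-derivatives by fixed constants; the derivatives of $\psi_{R_2}(v-w)|v-w|^{\gamma+2}\Pi[v-w]$ of any given order by a constant depending only on $\gamma,R_1,R_2$ and the order, using that on the support the argument stays in the annulus $\{\,1/R_2\le|z|\le2(R_1+1)\,\}$; and the derivatives of $J_0^\epsilon$, which is $C^\infty$, by a constant depending on $\epsilon,\mu_0,R_1$ and the order, since they are continuous hence bounded on the compact ball $\overline{B_{R_1+1}}$. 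Collecting these yields $\|D_v^kD_w^lF\|_{L^\infty(\Rd\times\Rd)}\le C(\epsilon,\gamma,R_1,R_2,\mu_0,k,l)$. There is no genuine obstacle in this lemma; the only mildly delicate points, which I would be careful to state explicitly, are that it is the \emph{product} $\psi_{R_2}(z)|z|^{\gamma+2}\Pi[z]$ rather than its individual factors that is smooth and bounded across $z=0$, and that both factors $\phi_{R_1}(v)$ and $\phi_{R_1}(w)$ are needed to keep $|z|^{\gamma+2}$ and its derivatives bounded from above when $\gamma+2>0$.
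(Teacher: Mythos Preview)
Your proof is correct and follows the same approach as the paper's own proof, which simply notes that compact support comes from the factor $\phi_{R_1}(v)\phi_{R_1}(w)$ and that smoothness comes from the avoidance of $v=w$ via $\psi_{R_2}(v-w)$. Your version is considerably more detailed than the paper's two-sentence argument, but the underlying ideas are identical.
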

	\begin{proof}
		The compact support property comes from the factor of $\phi_{R_1}(v)\phi_{R_1}(w)$ in the definition. The regularity comes from the avoidance of $v=w$ due to the factor $\psi_{R_2}(v-w)$.
	\end{proof}
	\begin{corollary}
		[Pointwise and measurewise regularity of $U$]
		\label{cor:smoothU}
		Consider the constant $C =C(\epsilon,\gamma,R_1,R_2,\mu_0,k,l)>0$ from Lemma~\ref{lem:defF} above. We have the following
		\begin{enumerate}
			\item Take $C_1=C(\epsilon,\gamma,R_1,R_2,\mu_0,0,1)>0$. For every $T>0; \nu^1,\nu^2 \in C([0,T];\mathscr{P}_2(\Rd)); t\in[0,T]; v\in\Rd$ we have the estimate
			\[
			|U[\nu^1(t)](v) - U[\nu^2(t)](v)| \leq C_1W_2(\nu_t^1,\nu_t^2).
			\]\label{item:point1U}
			\item Take $C_2 = C(\epsilon,\gamma,R_1,R_2,\mu_0,1,0) >0$. For every $T>0;\nu\in C([0,T];\mathscr{P}_2(\Rd));t\in[0,T];v_1,v_2\in\Rd$ we have the estimate
			\[
			|U[\nu(t)](v_1) - U[\nu(t)](v_2)| \leq C_2|v_1-v_2|.
			\]\label{item:point2U}
		\end{enumerate}
	\end{corollary}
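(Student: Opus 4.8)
The plan is to observe that both estimates are immediate consequences of the uniform smoothness of the kernel $F$ recorded in Lemma~\ref{lem:defF}, once we rewrite $U$ in terms of $F$. Indeed, comparing the definitions of $U[\mu]$ and of $F$, for any $\nu\in\mathscr{P}_2(\Rd)$ and $v\in\Rd$ we have the compact expression
\[
U[\nu](v) = -\iR F(v,v_*)\,d\nu(v_*).
\]
Thus the two claimed inequalities amount to controlling the variation of the linear functional $\nu\mapsto \iR F(v,\cdot)\,d\nu$ in the measure argument, and the variation of $\iR F(\cdot,w)\,d\nu(w)$ in the spatial argument, using respectively the bounds $\|D_w F\|_{L^\infty}\le C(\epsilon,\gamma,R_1,R_2,\mu_0,0,1)$ and $\|D_v F\|_{L^\infty}\le C(\epsilon,\gamma,R_1,R_2,\mu_0,1,0)$ from Lemma~\ref{lem:defF}.

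For item~\ref{item:point2U}, fix $t\in[0,T]$ and $v_1,v_2\in\Rd$. Writing
\[
U[\nu(t)](v_1)-U[\nu(t)](v_2) = -\iR \big(F(v_1,w)-F(v_2,w)\big)\,d\nu_t(w),
\]
I would apply the mean value inequality in the first variable, $|F(v_1,w)-F(v_2,w)|\le \|D_v F\|_{L^\infty}|v_1-v_2|$, and then use that $\nu_t$ is a probability measure to integrate out $w$; this yields the bound with $C_2:=C(\epsilon,\gamma,R_1,R_2,\mu_0,1,0)$. For item~\ref{item:point1U}, fix $t\in[0,T]$, $v\in\Rd$, and take an optimal coupling $\pi\in\Pi(\nu^1_t,\nu^2_t)$ realizing $W_2(\nu^1_t,\nu^2_t)$. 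Then
\[
U[\nu^1(t)](v)-U[\nu^2(t)](v) = \iint_{\Rd\times\Rd}\big(F(v,w_2)-F(v,w_1)\big)\,d\pi(w_1,w_2),
\]
and bounding $|F(v,w_2)-F(v,w_1)|\le \|D_w F\|_{L^\infty}|w_1-w_2|$ followed by Cauchy--Schwarz in $\pi$ gives
\[
|U[\nu^1(t)](v)-U[\nu^2(t)](v)| \le \|D_w F\|_{L^\infty}\Big(\iint |w_1-w_2|^2\,d\pi\Big)^{1/2} = \|D_w F\|_{L^\infty}\,W_2(\nu^1_t,\nu^2_t),
\]
so $C_1:=C(\epsilon,\gamma,R_1,R_2,\mu_0,0,1)$ works. (Equivalently, one can invoke Kantorovich--Rubinstein duality together with $W_1\le W_2$.)

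There is no genuine obstacle here: the content is entirely in Lemma~\ref{lem:defF}, namely that the cutoffs $\phi_{R_1}$ and $\psi_{R_2}$ and the boundedness of $J_0^\epsilon$ (from Lemma~\ref{lem:estlogdiff}) make $F$ smooth with compact support and hence globally Lipschitz in each argument, uniformly in all the relevant parameters. The only point requiring a little care is bookkeeping of the constants, i.e.\ matching the Lipschitz constant in the spatial variable to the index $(1,0)$ and the one in the measure variable to the index $(0,1)$ in Lemma~\ref{lem:defF}.
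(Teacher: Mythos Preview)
Your proposal is correct and follows essentially the same approach as the paper: rewrite $U$ as an integral of $F$, use the mean-value inequality in the appropriate variable (second for item~\ref{item:point1U}, first for item~\ref{item:point2U}) with the Lipschitz constants from Lemma~\ref{lem:defF}, and for the measure variable pass through an optimal $W_2$-coupling followed by Cauchy--Schwarz (equivalently $W_1\le W_2$). The only difference is the order in which you treat the two items, which is immaterial.
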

	\begin{remark}
		By considering the anti-symmetric property of $F$ when swapping variables $v\leftrightarrow w$, one really obtains $C_1 = C_2$. Their distinction in this corollary is artificial.
	\end{remark}
	\begin{proof}
		\underline{Item~\ref{item:point1U}:}
		\\
		Firstly, for every $t\in[0,T]$ take $\pi(t)\in\mathscr{P}_2(\Rd\times \Rd)$ the 2-Wasserstein optimal transportation plan connecting $\nu^1(t)$ and $\nu^2(t)$ which exists, see~\cite{V09}. We estimate the difference with notation from Lemma~\ref{lem:defF}
		\begin{align*}
			|U[\nu^1(t)](v) - U[\nu^2(t)](v)|  	&= \left|
			\iR F(v,w)d\nu_t^1(w) - \iR F(v,\bar{w})d\nu_t^2(\bar{w})
			\right| 	\\
			&= \left|
			\iiR F(v,w) - F(v,\bar{w})d\pi_t(w,\bar{w})
			\right| 	\\
			&\leq C_1\iiR |w-\bar{w}| d\pi_t(w,\bar{w}) 	\\
			&\leq C_1 W_2(\nu_t^1,\nu_t^2).
		\end{align*}
		The first inequality uses a mean-value type estimate (in the second variable of $F$) and the second inequality uses Cauchy-Schwarz or equivalently, that $W_2$ is stronger than $W_1$.
		\\
		\underline{Item~\ref{item:point2U}:}
		\\
		As with item~\ref{item:point1U}, we estimate the difference using $F$ to find
		\begin{align*}
			|U[\nu(t)](v_1) - U[\nu(t)](v_2)| 	&= \left|
			\iR F(v_1,w) - F(v_2,w)d\nu_t(w)
			\right| 	\\
			&\leq \iR |F(v_1,w) - F(v_2,w)| d\nu_t(w) 	\\
			&\leq C_2 |v_1-v_2|.
		\end{align*}
		Once more, a mean-value type estimate is applied (in the first variable of $F$) and we recall $\nu_t$ is a probability measure.
	\end{proof}
	The next result combines both items of Corollary~\ref{cor:smoothU} to estimate the regularity of the flow map with respect to measures and follows exactly as in~\cite{canizo_well-posedness_2011}.
	\begin{lemma}
		[Continuity of flow map with respect to measures]
		\label{lem:meascontflow}
		For $T>0$ fix any $\nu^1,\nu^2\in C([0,T];\mathscr{P}_2(\Rd))$ and  $t\in[0,T]$. With $C:= C_1 = C_2$ the same constants in Corollary~\ref{cor:smoothU}, we have the estimate
		\[
		||\Phi_{\nu^1}^t - \Phi_{\nu^2}^t||_{L^\infty(\Rd)} \leq (e^{Ct}-1)d(\nu^1,\nu^2),
		\]
		recalling that $d(\nu^1,\nu^2) = \sup_{t\in [0,T]} W_2(\nu_t^1,\nu_t^2)$.
	\end{lemma}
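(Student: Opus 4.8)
The plan is to run a Grönwall estimate on the separation of trajectories and then sharpen the constant by an explicit ODE comparison. Fix $v_0\in\Rd$ and set
\[
g(t):=\left|\Phi_{\nu^1}^t(v_0)-\Phi_{\nu^2}^t(v_0)\right|,
\]
so that $g(0)=0$ since both flows start at $v_0$. By the integral form of the characteristic equation (Lemma~\ref{lem:charaux}), each $\Phi_{\nu^i}^t(v_0)=v_0+\int_0^t U[\nu^i(s)]\big(\Phi_{\nu^i}^s(v_0)\big)\,ds$. Subtracting the two identities and inserting the intermediate term $U[\nu^1(s)]\big(\Phi_{\nu^2}^s(v_0)\big)$ splits the integrand into a ``same field, different point'' piece, bounded by $C\,g(s)$ via the pointwise Lipschitz estimate of Corollary~\ref{cor:smoothU}, and a ``different field, same point'' piece, bounded by $C\,W_2(\nu^1_s,\nu^2_s)\le C\,d(\nu^1,\nu^2)$ via the measure-Lipschitz estimate of Corollary~\ref{cor:smoothU}. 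Writing $D:=d(\nu^1,\nu^2)$, this gives
\[
g(t)\le C\,D\,t+C\int_0^t g(s)\,ds,\qquad t\in[0,T].
\]

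The second step is to extract the sharp factor $e^{Ct}-1$ rather than the cruder $CDt\,e^{Ct}$ produced by a bare Grönwall inequality. Set $u(t):=\int_0^t g(s)\,ds$, so $u(0)=0$ and $u'(t)=g(t)\le CDt+Cu(t)$. Comparing with the solution of the linear ODE $v'=CDt+Cv$, $v(0)=0$, namely $v(t)=-Dt+\tfrac{D}{C}\big(e^{Ct}-1\big)$, one gets $u\le v$ on $[0,T]$, since $\big(e^{-Ct}(u-v)\big)'\le 0$ and $u(0)-v(0)=0$. Substituting back,
\[
g(t)\le C\,D\,t+C\,u(t)\le C\,D\,t+C\,v(t)=D\big(e^{Ct}-1\big).
\]
As $v_0\in\Rd$ was arbitrary, taking the supremum yields $\|\Phi_{\nu^1}^t-\Phi_{\nu^2}^t\|_{L^\infty(\Rd)}\le (e^{Ct}-1)\,d(\nu^1,\nu^2)$.

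I do not expect a genuine obstacle here: once Corollary~\ref{cor:smoothU} is available this is the textbook argument and, as the statement already announces, it ``follows exactly as in~\cite{canizo_well-posedness_2011}''. The only point requiring care is the bookkeeping of the constant so that the prefactor is exactly $e^{Ct}-1$, which vanishes at $t=0$; this is precisely what makes $\nu\mapsto\big(t\mapsto\Phi_{\nu}^t\#\mu_0\big)$ a contraction on a sufficiently short time interval in the fixed-point argument that follows, and it is what the ODE comparison above buys us over the crude integral Grönwall inequality.
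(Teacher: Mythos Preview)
Your proof is correct and is exactly the standard argument the paper defers to via the citation~\cite{canizo_well-posedness_2011}: split the integrand using the two Lipschitz estimates of Corollary~\ref{cor:smoothU} and apply Gr\"onwall. Your ODE comparison via $u(t)=\int_0^t g(s)\,ds$ is a clean way to recover the precise factor $e^{Ct}-1$ (equivalently, one can apply the constant-source Gr\"onwall inequality directly to $g(t)\le\int_0^t C\big(g(s)+D\big)\,ds$).
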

	It is by now classical how to obtain Theorem~\ref{thm:auxpde} from Corollary~\ref{cor:smoothU} and Lemma~\ref{lem:meascontflow}, see~\cite{canizo_well-posedness_2011,CCH14,Golse16} for instance. The time of existence can be given by any $0 < T < \frac{1}{C}\log 2$ where $C>0$ is chosen as in Lemma~\ref{lem:meascontflow} and the result follows by a fixed point argument. The extension to all times is owed to the fact that $C>0$ depends on the initial data $\mu_0$ only through its second moment. This quantity is conserved through by the evolution of~\eqref{eq:auxpde} and so the maximal time of existence is $+\infty$.
\end{appendix}

\section*{Acknowledgements}
JAC was supported the Advanced Grant Nonlocal-CPD (Nonlocal PDEs for Complex Particle Dynamics: 	Phase Transitions, Patterns and Synchronization) of the European Research Council Executive Agency (ERC) under the European Union's Horizon 2020 research and innovation programme (grant agreement No. 883363). JAC and MGD were partially supported by EPSRC grant number EP/P031587/1. MGD was partially supported by CNPq-Brazil (\#308800/2019-2) and Instituto Serrapilheira. JW was funded by the President's PhD Scholarship program of Imperial College London. JAC and JW were also partially supported by the Royal Society through the International Exchange Scheme 2016 CNRS France. JAC and MGD would like to thank the American Institute of Mathematics since our attendance to the AIM workshop ``Nonlocal differential equations
in collective behavior'' in June 2018 triggered this research.

\bibliographystyle{abbrv}
\bibliography{My_Library}
\end{document}